\documentclass[letterpaper, 11pt,  reqno]{amsart}

\usepackage{amsmath,amssymb,amscd,amsthm,amsxtra, esint}

\usepackage[implicit=true]{hyperref}

\setlength{\pdfpagewidth}{8.50in}
\setlength{\pdfpageheight}{11.00in}

\headheight=8pt
\topmargin=0pt
\textheight=624pt
\textwidth=432pt
\oddsidemargin=18pt
\evensidemargin=18pt

\allowdisplaybreaks[2]

\sloppy

\hfuzz  = 0.5cm 


\usepackage{color}

\definecolor{gr}{rgb}   {0.,   0.69,   0.23 }
\definecolor{bl}{rgb}   {0.,   0.5,   1. }
\definecolor{mg}{rgb}   {0.85,  0.,    0.85}
\definecolor{yl}{rgb}   {0.8,  0.7,   0.}
\definecolor{or}{rgb}  {0.7,0.2,0.2}

\newtheorem{theorem}{Theorem} [section]

\newtheorem{lemma}[theorem]{Lemma}
\newtheorem{proposition}[theorem]{Proposition}
\newtheorem{remark}[theorem]{Remark}

\newtheorem{definition}[theorem]{Definition}
\newtheorem{corollary}[theorem]{Corollary}

\newtheorem*{acknowledgment}{Acknowledgments}

\DeclareMathOperator*{\intt}{\int}

\DeclareMathOperator*{\supp}{supp}

%
\newcommand{\1}{\hspace{0.5mm}\text{I}\hspace{0.5mm}}
\newcommand{\II}{\text{I \hspace{-2.8mm} I} }
\newcommand{\III}{\text{I \hspace{-2.9mm} I \hspace{-2.9mm} I}}

\newcommand{\I}{\mathcal{I}}

\newcommand{\noi}{\noindent}
\newcommand{\Z}{\mathbb{Z}}
\newcommand{\R}{\mathbb{R}}
\newcommand{\C}{\mathbb{C}}
\newcommand{\T}{\mathbb{T}}

\let\Re=\undefined\DeclareMathOperator*{\Re}{Re}
\let\Im=\undefined\DeclareMathOperator*{\Im}{Im}

\let\P= \undefined
\newcommand{\P}{\mathbf{P}}

\newcommand{\E}{\mathcal{E}}

\newcommand{\N}{\mathcal{N}}
\newcommand{\NB}{\mathbb{N}}

\newcommand{\al}{\alpha}

\newcommand{\dl}{\delta}

\newcommand{\nb}{\nabla}

\newcommand{\Dl}{\Delta}
\newcommand{\eps}{\varepsilon}

\newcommand{\g}{\gamma}
\newcommand{\G}{\Gamma}
\newcommand{\ld}{\lambda}

\newcommand{\s}{\sigma}

\newcommand{\ft}{\widehat}

\newcommand{\wt}{\widetilde}
\newcommand{\cj}{\overline}

\newcommand{\dt}{\partial_t}
\newcommand{\dd}{\partial}

\renewcommand{\l}{\ell}
\renewcommand{\o}{\omega}
\renewcommand{\O}{\Omega}

\newcommand{\les}{\lesssim}
\newcommand{\ges}{\gtrsim}

\newcommand{\jb}[1]
{\langle #1 \rangle}

\newcommand{\ind}{\mathbf 1}

\newcommand{\lr}[1]{\langle #1 \rangle}

\DeclareMathOperator{\Id}{Id}

\numberwithin{equation}{section}
\numberwithin{theorem}{section}

\newcommand{\too}{\longrightarrow}

\begin{document}
\baselineskip = 15pt

\title[Probabilistic well-posedness of NLS on $\R^d$, $d = 5, 6$] 
{On the probabilistic  well-posedness of the  nonlinear Schr\"{o}dinger equations
with non-algebraic nonlinearities}

\author[T. Oh]{Tadahiro Oh}
\address{
Tadahiro Oh\\
School of Mathematics\\
The University of Edinburgh\\
and The Maxwell Institute for the Mathematical Sciences\\
James Clerk Maxwell Building\\
The King's Buildings\\
 Peter Guthrie Tait Road\\
Edinburgh\\ 
EH9 3FD\\United Kingdom} 

\email{hiro.oh@ed.ac.uk}

\author[M. Okamoto]{Mamoru Okamoto}
\address{Mamoru Okamoto\\
Division of Mathematics and Physics, Faculty of Engineering, Shinshu University, 4-17-1 Wakasato, Nagano City 380-8553, Japan}
\email{m\_okamoto@shinshu-u.ac.jp}

\author[O. Pocovnicu]{Oana Pocovnicu}
\address{
Oana Pocovnicu\\
Department of Mathematics\\
Heriot-Watt University and The Maxwell Institute for the Mathematical Sciences\\
 Edinburgh\\ EH14 4AS\\ United Kingdom
}

\email{o.pocovnicu@hw.ac.uk}

\subjclass[2010]{35Q55}

\keywords{nonlinear Schr\"odinger equation; almost sure local well-posedness; 
almost sure global well-posedness; finite time blowup}

\begin{abstract}
We consider the Cauchy problem for 
the nonlinear Schr\"odinger equations (NLS) with non-algebraic nonlinearities on the Euclidean space.
In particular, we study 
the   energy-critical NLS  on $\R^d$, $d=5,6$, 
and energy-critical NLS without gauge invariance
and  prove that they are almost surely locally well-posed with respect to randomized initial data below the energy space.
We also study the long time behavior of  solutions to these equations:
(i) 
we  prove almost sure global well-posedness
of the (standard) energy-critical NLS  on $\R^d$, $d = 5, 6$, in the defocusing case,
and (ii) 
we present a probabilistic construction of finite time blowup solutions
 to the energy-critical NLS without gauge invariance below the energy space.

\end{abstract}

%
\maketitle
\tableofcontents

\baselineskip = 14pt

\section{Introduction}\label{SEC:1}

\subsection{Nonlinear Schr\"odinger equations}
We consider the Cauchy problem for the following energy-critical nonlinear Schr\"{o}dinger equation (NLS)
on $\R^d$, $d = 5, 6$:
\begin{equation}
\label{NLS}
\begin{cases}
 i \dt u + \Dl  u = \pm |u|^{\frac{4}{d-2}} u  \\
 u|_{t = 0} = \phi,
\end{cases}
\qquad (t, x) \in \R\times \R^d.
\end{equation}

\noi
This equation enjoys the following dilation symmetry:
\[
u(t,x) \ \longmapsto \ u_{\mu} (t,x) := \mu^{\frac{d-2}{2}} u ( \mu^2 t  , \mu x )
\]
for $\mu >0$.
This dilation symmetry preserves  the $\dot H^1$-norm 
of the initial data $\phi$, thus inducing the scaling critical Sobolev regularity  $s_\text{crit} = 1$.
Moreover,  the energy (= Hamiltonian) of a solution $u$
remains invariant under this dilation symmetry.
For this reason, we refer to~\eqref{NLS} as energy-critical 
and $\dot H^1(\R^d)$ as the energy space.

The Cauchy problem \eqref{NLS} in a general dimension has been 
at the core of the study of dispersive equations for several decades
and has been studied extensively.
In particular, for $d \geq  5$, it is known that \eqref{NLS} is (i) locally well-posed 
in the energy space \cite{CW}
and (ii) globally well-posed in the defocusing case \cite{V}
and also in the focusing case under some assumption on the (kinetic) energy \cite{KV}.
On the other hand, 
\eqref{NLS} is known to be ill-posed in $H^s(\R^d)$, $s < s_\text{crit} = 1$, 
in the sense of norm inflation \cite{CCT}; 
 there exists a sequence $\{ u_n\}_{n\in \NB}$ of (smooth) solutions to \eqref{NLS}
and $\{t_n\}_{n \in \NB} \subset \R_+$ such that 
$\|u_n(0) \|_{H^s} < \frac 1n$
but $\|u_n(t_n) \|_{H^s} > n$
with $t_n < \frac 1n$.
This in particular shows that the solution map to \eqref{NLS}
can not be extended to be a continuous map on $H^s(\R^d)$, $s < 1$,
thus violating one of the important criteria for well-posedness.

Despite the ill-posedness below the energy space, 
one may still hope to construct unique local-in-time solutions
in a probabilistic manner, thus establishing almost sure local well-posedness
in some suitable sense.
Such an approach first appeared in the work 
by McKean~\cite{McKean} and Bourgain \cite{BO96}
in the study of invariant Gibbs measures for the cubic NLS on $\T^d$, $d = 1, 2$.
In particular, they established almost sure local well-posedness
with respect to particular random initial data.\footnote{These local-in-time solutions
were then extended globally in time by invariance
of the Gibbs measures.
In the following, however, we do not use any invariant measure.}
This random initial data in \cite{McKean, BO96}
can be viewed as a randomization of the Fourier coefficients
of a particular function (basically the antiderivative of the Dirac delta function)
via the multiplication by independent Gaussian random variables.
Such  randomization of the Fourier series is classical
and  well studied \cite{PZ, Kahane}.
In \cite{BT1}, 
Burq-Tzvetkov elaborated this idea further.
In particular, in the context of the cubic nonlinear wave equation (NLW)
on a three dimensional compact Riemannian manifold, 
they considered a randomization via the Fourier series expansion as above
for  {\it any} rough initial condition below the scaling critical 
Sobolev regularity
and established almost sure local well-posedness with respect to the randomization.
Such randomization via the Fourier series expansion
is natural 
on compact domains and more generally
in situations where the associated elliptic operators have discrete spectra
\cite{Tho09, DC, CO}.

Our main focus is to study NLS \eqref{NLS} on the Euclidean space $\R^d$.
In this setting, the randomization via the Fourier series expansion does not quite work
as the frequency space $\R^d_\xi$ is not discrete.
We instead consider a randomization associated to the Wiener decomposition 
$\R^d_\xi = \bigcup_{n \in \Z^d} (n+ (-\frac 12, \frac12]^d)$.
See  \cite{ZF, LM, BOP1, BOP2, HO16}.
Let $\psi \in \mathcal S(\R ^d)$ satisfy
\[
\supp \psi \subset [-1,1]^d 
\qquad \text{and}\qquad \sum _{n \in \mathbb{Z}^d} \psi (\xi -n) =1 \quad \text{for any $\xi \in \R ^d$}.
\]

\noi
Then, given a function $\phi$ on $\R^d$, we have
\[ \phi =  \sum _{n \in \mathbb{Z}^d} \psi (D-n) \phi.\]

\noi
This replaces the role of the Fourier series expansion on compact domains.
We then define the Wiener randomization of $\phi$ by
\begin{equation} \label{rand}
\phi ^{\omega} := \sum _{n \in \mathbb{Z}^d} g_n (\omega ) \psi (D-n) \phi,
\end{equation}

\noi
where $\{ g_n \}$ is a sequence of independent mean zero complex-valued random variables 
on a probability space $(\Omega , \mathcal{F} ,P)$.
In the following, we assume that 
 the real and imaginary parts of $g_n$ are independent and endowed with probability distributions $\mu _n^{(1)}$ and $\mu _n^{(2)}$, satisfying the following exponential moment bound:
\[
\int _{\R} e^{\kappa x} d \mu _{n}^{(j)} (x) \le e^{c \kappa ^2}
\]
for all $\kappa \in \R$, $n \in \mathbb{Z}^d$, $j=1,2$.
This condition is satisfied by the standard complex-valued Gaussian random variables and the standard Bernoulli random variables.

On the one hand, the randomization does not improve differentiability
just like the randomization via the Fourier series expansion
\cite{BT1, AT}.
On the other hand, it 
improves integrability
as for the classical random Fourier series \cite{PZ, Kahane}.
From this point of view, the randomization makes the problem {\it subcritical} in some sense,
at least for local-in-time problems.

In the following, we study the Cauchy problem \eqref{NLS} with
random initial data given by the Wiener randomization $\phi^\o$ of a given function $\phi \in H^s(\R^d)$, $d = 5, 6$.
In view of the deterministic well-posedness result for $s \geq 1$, 
we only consider $s < s_\text{crit} = 1$.

\begin{theorem} \label{THM:LWP1}
Let $d=5,6$ and  $1-\frac{1}{d}<s<1$.
Given $\phi \in H^s (\R ^d)$, let $\phi^{\omega}$ be its Wiener randomization defined in \eqref{rand}.
Then, 
the Cauchy problem \eqref{NLS} is almost surely locally well-posed
with respect to the random initial data $\phi^\o$.

More precisely, there exist $C, c, \gamma >0$ such that for each $0<T\ll1$, there exists $\Omega _{T} \subset \Omega$ with $P(\Omega _T^c) \leq  C \exp \left( - \frac{c}{T^{\gamma} \| \phi \|_{H^s}^2} \right)$
such that  for each $\omega \in \Omega_T$, 
there exists a unique solution $u=u^\o \in C([-T,T]; H^s(\R^d))$ to \eqref{NLS} with $u|_{t = 0} = \phi ^{\omega}$ in the class
\begin{align*}
S(t)\phi^\o + X^1_T \subset
 S(t) \phi ^{\omega} + C([-T,T] ; H^1(\R^d)) \subset C([-T,T] ; H^s(\R^d)),
\end{align*}

\noi
where $S(t) = e^{it\Dl}$ and $X^1_T$ is defined in Section \ref{SEC:space} below.
\end{theorem}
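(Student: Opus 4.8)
The plan is to implement the now-standard Bourgain-type ``first iteration'' (or Da Prato--Debussche) trick: write the solution as $u = S(t)\phi^\o + v$, where the rough linear evolution $z := S(t)\phi^\o$ carries all the low regularity and the nonlinear remainder $v$ is sought in the smoother space $X^1_T \subset C([-T,T];H^1(\R^d))$. Substituting into \eqref{NLS}, the unknown $v$ solves the perturbed equation
\begin{equation*}
i\dt v + \Dl v = \pm |z+v|^{\frac{4}{d-2}}(z+v), \qquad v|_{t=0} = 0,
\end{equation*}
equivalently the Duhamel formulation $v = \mp i \int_0^t S(t-t')\, \mathcal N(z+v)(t')\, dt'$ with $\mathcal N(w) = |w|^{\frac{4}{d-2}}w$. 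First I would record the probabilistic input: the Wiener randomization $\phi^\o$ gains integrability (but not derivatives), so $z = S(t)\phi^\o$ lies, outside an exceptional set $\Omega_T$ of probability $\le C\exp(-c/(T^\gamma\|\phi\|_{H^s}^2))$, in the relevant Strichartz-type spaces at regularity $s$ with small norm when $T$ is small — this is a quantitative large-deviation estimate of the type in \cite{BOP1, BOP2, HO16}, and the exponential bound in the statement is exactly what that lemma yields. On this good set, the heart of the matter is a contraction mapping argument for
\begin{equation*}
\Gamma v(t) := \mp i \int_0^t S(t-t')\, \mathcal N(z+v)(t')\, dt'
\end{equation*}
in a ball of $X^1_T$.

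The key multilinear estimate is a nonlinear bound of the schematic form $\|\Gamma v\|_{X^1_T} \lesssim \|\mathcal N(z+v)\|_{N^1_T} \lesssim \big(\|z\|_{Z^s_T} + \|v\|_{X^1_T}\big)^{\frac{4}{d-2}}\big(\|z\|_{Z^s_T}+\|v\|_{X^1_T}\big)$, together with the matching difference estimate, where $Z^s_T$ denotes the space in which $z$ is controlled with high probability and $N^1_T$ is the dual space adapted to $X^1_T$. To prove this I would put one factor of $z+v$ in $L^\infty_t H^1$ (or the appropriate $U^2/V^2$-type space at regularity $1$) and distribute the remaining $\frac{4}{d-2}$ factors into Strichartz norms, using the fractional Leibniz rule to place the derivative on the $H^1$ factor and Hölder in space-time to close. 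Since $d=5,6$ makes $\frac{4}{d-2}$ non-integer (hence the nonlinearity is non-algebraic), one cannot simply expand $|z+v|^{\frac{4}{d-2}}(z+v)$ multilinearly; instead I would invoke the fractional chain rule / the elementary pointwise bound $\big||w_1|^{\frac{4}{d-2}}w_1 - |w_2|^{\frac{4}{d-2}}w_2\big| \lesssim (|w_1|+|w_2|)^{\frac{4}{d-2}}|w_1-w_2|$ for the difference estimate. The restriction $s > 1 - \frac 1d$ should enter precisely here: it is the threshold at which, after interpolating the Strichartz exponents available at regularity $s$ for $z$ against those at regularity $1$ for $v$, the product lands in $N^1_T$ with a positive power of $T$ to spare, which furnishes the smallness needed for the contraction when $T \ll 1$ (on $\Omega_T$).

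The remaining steps are routine: choosing the radius of the ball comparable to a fixed power of $\|z\|_{Z^s_T}$ and $T$ small depending on $\|\phi\|_{H^s}$ so that $\Gamma$ maps the ball to itself and is a contraction there yields a unique fixed point $v \in X^1_T$; then $u = z + v \in S(t)\phi^\o + C([-T,T];H^1) \subset C([-T,T];H^s)$ as claimed. Uniqueness in the stated class follows from the difference estimate (any two solutions of the form $z+v_1$, $z+v_2$ with $v_i \in X^1_T$ must coincide), and persistence of regularity / continuity in time of $v$ comes from the embedding $X^1_T \hookrightarrow C([-T,T];H^1)$ built into the definition of $X^1_T$ in Section~\ref{SEC:space}. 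I expect the main obstacle to be the nonlinear estimate at the endpoint regularity $s = 1 - \frac 1d$: one must carefully track which Strichartz pairs are admissible in dimensions $d=5,6$ (the energy-critical Strichartz pair is $L^{\frac{2(d+2)}{d-2}}_{t,x}$), verify that the dispersive estimates for the randomized linear evolution indeed hold in the $Z^s_T$-norm one needs, and confirm that the non-algebraic nonlinearity can be handled by a fractional chain rule in those spaces without losing the crucial positive power of $T$.
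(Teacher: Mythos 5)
Your overall strategy is exactly the paper's: write $u = z + v$ with $z = S(t)\phi^\omega$, solve the Duhamel fixed-point problem for $v$ in $X^1_T$ by contraction, and use the probabilistic Strichartz estimates (Lemma~\ref{LEM:prob11}) to define the good set $\Omega_T$ and obtain the exponential tail bound. The difference estimate via the pointwise bound $\big||w_1|^{\frac{4}{d-2}}w_1 - |w_2|^{\frac{4}{d-2}}w_2\big| \lesssim (|w_1|+|w_2|)^{\frac{4}{d-2}}|w_1-w_2|$ is also the right idea (the paper implements it through the Fundamental Theorem of Calculus in~\eqref{diff1}). But there is a genuine gap in your nonlinear estimate, precisely at the step you flagged as the likely obstacle.

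When you ``use the fractional Leibniz rule to place the derivative on the $H^1$ factor,'' you are implicitly assuming the gradient can always be moved onto $v$. It cannot: after applying the (ordinary) chain rule~\eqref{nonlin3b}--\eqref{nonlin5}, one of the resulting terms has $\nabla$ falling on $z$, and this term is the crux. Since $\phi \in H^s$ with $s<1$ and the randomization gains no derivatives, $\nabla z$ is \emph{not} controlled in any $L^q_t L^r_x$ Strichartz norm by $\|\phi\|_{H^s}$ — the probabilistic Strichartz estimate costs a full derivative you do not have. Your sketch of ``H\"older in space-time'' and ``interpolating Strichartz exponents at regularity $s$ for $z$ against regularity $1$ for $v$'' does not close this term. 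The paper handles it by Littlewood-Paley decomposition of the product $\P_{N_1}(\nabla z)\,\P_{N_2}(v+z)$ and a dichotomy on the frequency ratio: when $N_2 \gtrsim N_1^{1/(d-1)}$ the loss $N_1^{1-s}$ on $z$ is absorbed by the gain $N_2^{-s}$ on the other factor (and summing the dyadic series is exactly what produces the threshold $s > 1 - \frac 1d$, Subcase~2.a); when $N_2 \ll N_1^{1/(d-1)}$ one must invoke the \emph{bilinear refinement of the Strichartz estimate} (Lemma~\ref{LEM:biStv}), which supplies the crucial factor $(N_2/N_1)^{1/2-\varepsilon}$ needed to recover the high-frequency loss from $\nabla z$ (Subcase~2.b and~\eqref{nonlin6}--\eqref{nonlin7}). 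Without the bilinear estimate and the frequency-localized case analysis, the contraction does not close at any $s<1$. Relatedly, the paper deliberately works with the plain gradient $\nabla$ in $X^1_T$ (see~\eqref{equiv}) and the exact chain rule rather than a fractional chain rule, precisely because the non-algebraic nonlinearity $|u|^{4/(d-2)}u$ is not smooth enough to survive a fractional Leibniz argument at the endpoint regularity; this is stated in the introduction as the reason for choosing the energy-critical setting and the $U^p/V^p$ framework over $X^{\sigma,b}$ with $\sigma = 1+\varepsilon$.
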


Almost sure local well-posedness  with respect to the Wiener randomization
has been studied in the context of  the cubic NLS 
and the quintic NLS on $\R^d$ \cite{BOP1, BOP2, Bre}
which are energy-critical in dimensions 4 and 3, respectively.
Note that when $d = 5, 6$, the energy-critical nonlinearity $ |u|^{\frac{4}{d-2}} u$ is no longer algebraic,
presenting a new difficulty in applying the argument in \cite{BOP1, BOP2, Bre}.

Let $z(t) = z^{\omega} (t) := S(t) \phi ^{\omega}$ denote the random linear solution
with $\phi^\o$ as initial data. 
If $u$ is a solution to \eqref{NLS}, then the residual term 
$v := u - z$ satisfies the following perturbed NLS:
\begin{equation}
\begin{cases}
 i\partial _t + \Delta  v =  \mathcal{N} (v+z^\o) \\
 v|_{t= 0} =0 ,
\end{cases}
\label{NLS1a}
\end{equation}

\noi
where  $\mathcal{N} (u) = \pm |u|^{\frac{4}{d-2}} u$.
In terms of the Duhamel formulation, \eqref{NLS1a} reads as
\begin{equation}
v(t) =- i\int_0^t S(t - t') \N(v+z^\o) (t') dt'.
\label{NLS1b}
\end{equation}

\noi
Then, the main objective is to solve the fixed point problem \eqref{NLS1b}.\footnote{In the field
of stochastic parabolic PDEs, this change of viewpoint
and solving the fixed point problem for the residual term $v$ is called
the Da Prato-Debussche trick \cite{DPD, DPD2}.
In the context of deterministic dispersive PDEs with random initial data, 
this goes back to the work by McKean \cite{McKean} and Bourgain \cite{BO96},
which precedes \cite{DPD, DPD2}.}
In fact, the first and third authors (with B\'enyi) \cite{BOP1, BOP2} studied
this problem for the residual term $v$ in the context of  the cubic NLS on $\R^d$ 
by carrying out case-by-case analysis
and estimating terms of the form $v \cj v v$, $v \cj v z$, $v \cj z z$, etc.
In \cite{Bre}, Brereton carried out similar analysis for the quintic NLS on $\R^d$.
Such case-by-case analysis is possible only for algebraic, i.e.~smooth, nonlinearities
and thus is not applicable to our problem at hand.
In this paper, we adjust the analysis from~\cite{BOP2}
in order to  handle  non-algebraic  nonlinearities.
Moreover, our analysis in this paper is simpler than that in \cite{BOP1, BOP2}
in the sense that we avoid thorough case-by-case analysis.
There is, however, a price to pay:
(i) While our approach for non-algebraic nonlinearities in this paper can be applied to 
the energy-critical cubic NLS on $\R^4$,
this would yield a worse regularity range   $s \in (\frac 34, 1)$
than the regularity range $s \in (\frac 35, 1)$ obtained  in \cite{BOP2}. 
This is due to the fact that 
we adjust our calculation to a non-smooth  nonlinearity.
(ii)~The constants in the nonlinear estimates in Section \ref{SEC:nonlin}
depend on the local existence time $T>0$ (see Proposition \ref{PROP:nonlin1} below).
In particular, Theorem \ref{THM:LWP1} is not accompanied
by almost sure small data global well-posedness and scattering.
This is in sharp contrast with the situation for the cubic nonlinearity considered in \cite{BOP2}.

Our main tools for proving  Theorem \ref{THM:LWP1} are similar
to those in \cite{BOP2};
the Fourier restriction norm method adapted to the spaces $V^p$ of functions of bounded $p$-variation
and their pre-duals $U^p$,  the bilinear refinement of the Strichartz estimate,
and the probabilistic Strichartz estimates thanks to the gain of integrability via the Wiener randomization.
In order to avoid the use of fractional derivatives, 
we focus on the energy-critical NLS
and solve the fixed point problem \eqref{NLS1b} in $X^1_T$ at the critical regularity
(for the residual term)
by performing a precise computation.
Namely, it is important that we use this refined version of the Fourier restriction norm method,
since if we were to use the usual  $X^{\s, b}$-spaces introduced in \cite{Bo2}, 
then we would need to study
the problem at the subcritical regularity $\s = 1+ \eps$ as in~\cite{BOP1},
creating a further difficulty.
Moreover, in proving  almost sure global well-posedness of \eqref{NLS}, 
it is essential that we only use the $X^\s_T$-norm, $\s \leq 1$, for the residual part $v$.
See Theorem \ref{THM:GWP} below.

Next, we consider the following energy-critical NLS without gauge invariance 
on $\R^d$, $d = 5, 6$:
\begin{equation}
\label{NLS2}
\begin{cases}
 i \dt u  + \Dl  u = \ld |u|^{\frac{d+2}{d-2}} \\
 u|_{t = 0} = \phi, 
\end{cases}
\end{equation}

\noi
where 
$\ld \in \C \setminus \{0\}$.
As in the case of the standard NLS \eqref{NLS}, 
one can prove local well-posedness of \eqref{NLS2} in $H^s(\R^d)$, $s \geq 1$, 
via the Strichartz estimates.
On the other hand, 
Ikeda-Inui~\cite{II15} showed 
that \eqref{NLS2} is ill-posed in $H^s(\R^d)$ with $s<1$.
More precisely, they proved non-existence of solutions
for rough initial data, satisfying a certain condition.
This ill-posedness result by non-existence is much stronger than 
the norm inflation proved for the standard NLS \eqref{NLS}.
The non-existence result in \cite{II15} studies
a rough initial condition and exhibits a pathological behavior in a direct manner,
while the norm inflation result in~\cite{CCT} is proved by studying the behavior of a sequence
of smooth solutions; in particular it does not say anything about rough solutions. 

\begin{theorem} \label{THM:LWP2}
Let $d=5,6$ and  $1-\frac{1}{d}<s<1$.
Given $\phi \in H^s (\R ^d)$, let $\phi^{\omega}$ be its Wiener randomization defined in \eqref{rand}.
Then, the Cauchy problem \eqref{NLS2} is almost surely locally  well-posed 
with respect to the random initial data $\phi^\o$
in the sense of Theorem \ref{THM:LWP1}.
\end{theorem}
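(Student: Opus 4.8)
The plan is to follow the proof of Theorem~\ref{THM:LWP1} essentially verbatim, the key point being that neither gauge invariance nor the precise algebraic form of the nonlinearity was used in an essential way there. As before, I would write $z = z^\o(t) := S(t)\phi^\o$ for the random linear solution and set $v := u - z$, so that $v$ solves the perturbed equation $i\dt v + \Dl v = \N(v+z^\o)$ with $v|_{t=0}=0$, where now $\N(u) = \ld|u|^{\frac{d+2}{d-2}}$; the corresponding Duhamel formulation is again \eqref{NLS1b}. The whole problem thus reduces, exactly as for \eqref{NLS}, to solving the fixed point problem \eqref{NLS1b} for $v$ in $X^1_T$ by a contraction argument.

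First I would verify that the nonlinear estimates of Section~\ref{SEC:nonlin} (in particular Proposition~\ref{PROP:nonlin1}) continue to hold with $\N(u) = \ld|u|^{\frac{d+2}{d-2}}$. The analysis there for \eqref{NLS} is arranged precisely so as to avoid any case-by-case expansion of the nonlinearity, and it relies only on the pointwise size bounds $|\N(u)| \les |u|^{\frac{d+2}{d-2}}$ and $|\nb\N(u)| \les |u|^{\frac4{d-2}}|\nb u|$, together with the H\"older-type difference bounds $|\N(u_1)-\N(u_2)| \les (|u_1|^{\frac4{d-2}}+|u_2|^{\frac4{d-2}})|u_1-u_2|$ and its analogue for $\nb(\N(u_1)-\N(u_2))$. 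Since $\frac{d+2}{d-2}\ge 1$ for $d=5,6$, the map $u\mapsto|u|^{\frac{d+2}{d-2}}$ on $\C\simeq\R^2$ is $C^1$ and vanishes to order $\frac{d+2}{d-2}$ at the origin, so all of these inequalities hold just as they do for $u\mapsto\pm|u|^{\frac4{d-2}}u$. (In fact, for $d=6$ the nonlinearity $\ld|u|^2$ is smooth, so only $d=5$ is genuinely non-algebraic; in either case the driving estimates are the same.) Because we work at the integer regularity $s=1$ for the residual term $v$, we never need more than one derivative on $\N$, and so the multilinear/H\"older analysis of Section~\ref{SEC:nonlin} carries over unchanged, yielding the same nonlinear estimates with $T$-dependent constants.

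With these estimates in hand, I would repeat the contraction argument of Theorem~\ref{THM:LWP1}: using the probabilistic Strichartz estimates to control the relevant norms of $z^\o$ on a set $\Omega_T$ with $P(\Omega_T^c)\le C\exp\big(-\tfrac{c}{T^\g\|\phi\|_{H^s}^2}\big)$, one shows that the map $v\mapsto -i\int_0^t S(t-t')\N(v+z^\o)(t')\,dt'$ is a contraction on a ball in $X^1_T$ for $0<T\ll1$ and $\o\in\Omega_T$. This gives, for each such $\o$, a unique $v\in X^1_T\subset C([-T,T];H^1(\R^d))$, hence a unique solution $u = z^\o + v$ in the class $S(t)\phi^\o + X^1_T$, and uniqueness in this class as well as continuity in time of $u$ in $H^s$ are inherited from the proof of Theorem~\ref{THM:LWP1}. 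I do not expect a genuine obstacle here; the only thing to be careful about is to confirm that nowhere in Section~\ref{SEC:nonlin} is the precise structure $|u|^{\frac4{d-2}}u$ --- as opposed to $|u|^{\frac{d+2}{d-2}}$ --- or gauge invariance exploited (e.g.\ via a cancellation or integration by parts special to gauge-invariant nonlinearities). Since the argument there is set up entirely through the size and difference bounds above, which is exactly what makes it applicable to non-algebraic $\N$ in the first place, this verification should be immediate.
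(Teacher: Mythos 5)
Your proposal is correct and follows essentially the same route as the paper: the paper proves Proposition~\ref{PROP:nonlin1} simultaneously for $\N(u)=|u|^{\frac{4}{d-2}}u$ and $\N(u)=|u|^{\frac{d+2}{d-2}}$ (indeed, the latter is handled \emph{first} in the proof via the chain-rule identity~\eqref{nonlin3b}, and the gauge-invariant case is reduced to it through~\eqref{nonlin5}), after which Section~\ref{SEC:thm} runs the contraction argument once for both nonlinearities. One small caveat on your framing: the argument does not rest purely on the pointwise size bounds $|\nb\N(u)|\les|u|^{\frac{4}{d-2}}|\nb u|$, etc., but on the precise chain-rule decomposition of $\nb\N(v+z)$ into terms where the gradient falls on a specific factor ($v$ or $z$) with the remaining factor carrying no derivative --- this is what makes the frequency-localized bilinear analysis in Cases~1 and~2 possible --- but since those identities hold equally well for $|u|^{\frac{d+2}{d-2}}$, the conclusion is unchanged.
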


Theorem \ref{THM:LWP2}, in particular, states that
upon the randomization, we can avoid these pathological initial data
constructed in \cite{II15}
for which no solution exists.
Compare this with the ``standard'' almost sure local well-posedness
results such as Theorem~\ref{THM:LWP1} above,
where the only known obstruction to  well-posedness below
a threshold regularity is discontinuity of the solution map.\footnote{Namely, 
the pathological behavior of the standard NLS \eqref{NLS}
below the scaling critical regularity $s_\text{crit} = 1$
is about the solution map (stability under perturbation) and is not about individual solutions (such as existence).
On the contrary, in the case of \eqref{NLS2}, there are individual initial data, 
each of which is responsible for the pathological behavior (non-existence of solutions).}
In this sense, Theorem \ref{THM:LWP2}
provides a more striking role of randomization, overcoming 
the non-existence result below the scaling critical regularity, 
and it seems that Theorem \ref{THM:LWP2} is the first such result.

The proof of Theorem~\ref{THM:LWP2} follows the same lines as that of Theorem \ref{THM:LWP1}.
When $d = 6$,  the nonlinearity  $|u|^2 = u\cj u$
in \eqref{NLS2} is algebraic.
Hence, one may also perform case-by-case analysis as in \cite{BOP2}.
We, however, do not pursue this direction 
since our purpose is to present a unified approach to the problem.

Next, let us state 
an almost sure local well-posedness result with slightly more general initial data.
Fix $\phi \in H^s(\R^d)\setminus H^1(\R^d)$.
Then, we consider the following Cauchy problem for given $v_0 \in H^1(\R^d)$:
\begin{equation}
\label{NLS3}
\begin{cases}
 i \dt u + \Dl  u = \N(u)\\
 u|_{t = 0} = v_0+ \phi^\o,
\end{cases}
\end{equation}

\noi
where 
$ \N(u) = \pm |u|^\frac{4}{d-2} u$ or $\ld |u|^\frac{d+2}{d-2}$
and $\phi^\o$ is the Wiener randomization of $\phi$.
Then, 
as a corollary to (the proof of) Theorems \ref{THM:LWP1} and \ref{THM:LWP2},
we have the following proposition.

\begin{proposition}\label{PROP:LWP3}
Let $d=5,6$ and  $1-\frac{1}{d}<s<1$.
Given $\phi \in H^s (\R ^d)$, let $\phi^{\omega}$ be its Wiener randomization defined in \eqref{rand}.
Then, given $v_0 \in H^1(\R^d)$, 
the Cauchy problem~\eqref{NLS3} is almost surely locally well-posed
with respect to  the Wiener randomization $\phi^\o$,
where 
the (random) local existence time $T = T_\o$
is assumed to be sufficiently small,
depending on the deterministic part $v_0$ of the initial data.
Moreover, the following blowup alternative holds;
let $T^* = T^*(\o, v_0)$ be the  forward maximal time of existence.
Then, either 
\begin{align}
T^* = \infty\qquad \text{or} \qquad \lim_{T\to T^*} \| u -S(t) \phi^\o \|_{L_t^{q_d}([0, T);W_x^{1, r_d})} = \infty,
\label{blowup}
\end{align}

\noi
where $(q_d,r_d)$ is a particular admissible pair given by 
\begin{align}
(q_d,r_d) := \big( \tfrac{2d}{d-2}, \tfrac{2d^2}{d^2-2d+4} \big).
\label{qr0}
\end{align}

\end{proposition}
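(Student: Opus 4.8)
The plan is to reuse the machinery developed for Theorems~\ref{THM:LWP1} and~\ref{THM:LWP2} essentially verbatim, the only new feature being the deterministic part $v_0 \in H^1(\R^d)$ of the initial data. Writing $u = S(t)\phi^\o + v = z^\o + v$, the residual term $v$ solves the same fixed point equation as before but with a nonzero free evolution, namely
\[
v(t) = S(t) v_0 - i \int_0^t S(t-t')\, \N(v + z^\o)(t')\, dt' ,
\]
in place of \eqref{NLS1b}. Since $S(\cdot)v_0 \in X^1_T$ with $\|S(\cdot)v_0\|_{X^1_T} \les \|v_0\|_{H^1}$ for every $T \le 1$, and since the nonlinear estimates of Section~\ref{SEC:nonlin} apply to the Duhamel term without any change, I would run the contraction mapping argument exactly as in the proofs of Theorems~\ref{THM:LWP1}--\ref{THM:LWP2}, now on the ball $\{ v \in X^1_T : \|v\|_{X^1_T} \le 2C\|v_0\|_{H^1}\}$. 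On the (probability one) event where the relevant probabilistic Strichartz norms of $z^\o$ on $[-T,T]$ are finite and decay as $T \to 0$, the map $\Gamma v := S(t)v_0 - i\int_0^t S(t-t')\N(v+z^\o)(t')\,dt'$ is then a contraction once $T = T_\o$ is taken small enough, depending on $\|v_0\|_{H^1}$ and on $\o$ (through the size of the random norms of $z^\o$). This yields the solution $v \in X^1_T \subset C([-T,T];H^1(\R^d))$, with uniqueness in the class $S(t)\phi^\o + C([-T,T];H^1(\R^d))$ following as in Theorems~\ref{THM:LWP1}--\ref{THM:LWP2}.

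For the blowup alternative, suppose $T^* = T^*(\o, v_0) < \infty$ and, arguing by contradiction, that $M := \|u - S(t)\phi^\o\|_{L^{q_d}_t([0,T^*); W^{1,r_d}_x)} < \infty$. Since $u - S(t)\phi^\o = v$ and $q_d < \infty$, absolute continuity of the integral lets me partition $[0,T^*)$ into finitely many subintervals $I_1, \dots, I_J$, $I_j = [t_{j-1},t_j]$, on each of which $\|v\|_{L^{q_d}_t(I_j; W^{1,r_d}_x)} < \eta$, with $\eta$ the small threshold governing the nonlinear estimates. On the compact interval $[0,T^*]$ the probabilistic Strichartz norms of $z^\o$ are almost surely finite, so on each $I_j$ the nonlinear estimates of Section~\ref{SEC:nonlin}, together with this smallness and an absorption argument, bound $\|v\|_{X^1(I_j)}$ in terms of $\|v(t_{j-1})\|_{H^1}$ and the random norms of $z^\o$. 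Iterating over $j = 1, \dots, J$ gives $\|v\|_{X^1_{[0,T^*]}} < \infty$; in particular $v(t)$ converges in $H^1(\R^d)$ as $t \uparrow T^*$. I would then restart the local theory at time $T^*$ with deterministic residual datum $v(T^*) \in H^1(\R^d)$, exactly as in the first part, extending the solution past $T^*$ and contradicting maximality. This establishes the dichotomy \eqref{blowup}.

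The step that will require the most care is the blowup alternative: one has to verify that control of the single admissible norm $L^{q_d}_t W^{1,r_d}_x$ of the residual $v$ --- rather than of the full $X^1_T$-norm --- is enough to close the continuation argument for the non-algebraic nonlinearity. This is precisely the feature that the nonlinear estimates of Section~\ref{SEC:nonlin} must supply: on a short interval $I = [a,b]$, the quantity $\|v\|_{X^1(I)}$ should be controlled by $\|v(a)\|_{H^1}$ plus a positive power of $\|v\|_{L^{q_d}_t(I; W^{1,r_d}_x)}$ times $\|v\|_{X^1(I)}$ (plus contributions involving only $z^\o$), which is exactly what makes the patching over $I_1, \dots, I_J$ work. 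The dependence of the nonlinear-estimate constants on the interval length, noted after Proposition~\ref{PROP:nonlin1}, is harmless for this argument since $T^*$ is fixed throughout.
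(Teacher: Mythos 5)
There is a genuine gap in the local-existence part of your argument. You propose to run the contraction on the ball $\{v\in X^1_T : \|v\|_{X^1_T}\le 2C\|v_0\|_{H^1}\}$ and take $T$ small. But Proposition~\ref{PROP:nonlin1} gives
\[
\|\N(v+z)\|_{N^1_T}\le C_1\Big\{\|v\|_{Y^1_T}^{\frac{d+2}{d-2}}+T^\theta M^{\frac{d+2}{d-2}}\Big\},
\]
so for $v$ in a ball of radius $R\sim\|v_0\|_{H^1}$ the Duhamel term contributes $C_1 R^{\frac{d+2}{d-2}}$, which is \emph{not} controlled by reducing $T$ and, since $\tfrac{d+2}{d-2}>1$, dominates $R$ when $\|v_0\|_{H^1}$ is large. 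Thus the map fails to take the ball into itself for general $v_0$, and likewise the Lipschitz bound \eqref{nonlin2} yields a constant $\sim R^{\frac{4}{d-2}}$ that need not be small. This is precisely the point the paper flags at the start of Section~\ref{SEC:LWP2}: with nonzero $v_0$ the $Y^1_T$-norm of $v$ does not tend to $0$ as $T\to 0$, so one cannot use \eqref{nonlin1}--\eqref{nonlin2} directly.

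The paper's remedy, which you in fact describe correctly when discussing the blowup alternative, is Corollary~\ref{COR:nonlin2} together with Lemma~\ref{LEM:LWP4}: the nonlinear estimate \eqref{Y1} is reorganized so that every term on the right is either a positive power of the \emph{auxiliary} norm $\|v\|_{L^{q_d}_t(I;W^{1,r_d}_x)}$ (which, applied to $S(t-t_0)v_0$, does vanish as $|I|\to 0$ by Strichartz and dominated convergence), or carries a factor $|I|^\theta$, and the full $Y^1(I)$-norm of $v$ appears only \emph{linearly}. The fixed point is then run on the two-norm ball $B_{R,M,\eta}=\{\|v\|_{X^1(I)}\le 2\wt R,\ \|v\|_{L^{q_d}_t(I;W^{1,r_d}_x)}\le 2\eta\}$ under the smallness hypotheses $\|S(t-t_0)v_0\|_{L^{q_d}_t(I;W^{1,r_d}_x)}\le\eta$ and $|I|\le\eta^{2/\theta}$, and this is what makes the contraction close for arbitrary $\|v_0\|_{H^1}$. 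You should invoke this refined estimate already in the existence step, not only in the continuation step; once you do, your blowup-alternative argument matches the paper's (partition $[0,T^*)$ so that $\|v\|_{L^{q_d}_t(I_j;W^{1,r_d}_x)}$ is small on each $I_j$, propagate $H^1$-bounds across subintervals via \eqref{Y1}, and restart the local theory at $T^*$) and is essentially correct.
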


Namely, this is an almost sure local well-posedness result 
with the initial data of the form:~``a {\it fixed} smooth deterministic function 
$+$
 a rough random perturbation''.
See, for example,~\cite{OQ}.
The proof of Proposition \ref{PROP:LWP3}
is based on 
 studying the equation for the residual term $v = u - z^\o$ as above:
\begin{equation}
\begin{cases}
 i\partial _t v + \Delta  v =  \mathcal{N} (v+z^\o) \\
 v|_{t= 0} =v_0 \in H^1(\R^d) , 
\end{cases}
\label{NLS4}
\end{equation}

\noi
where  we now have a non-zero initial condition.
For this fixed point problem, 
the critical nature of the problem appears through  the deterministic initial condition  $v_0$.
In particular, the local existence time $T = T(v_0)$ depends on the profile of the (deterministic) initial data $v_0$.
We point out that the good set of probability 1 on which almost sure local well-posedness holds
does {\it not} depend on the choice of $v_0 \in H^1(\R^d)$.

\smallskip
In the next two subsections, we state results on   the long time behavior of  solutions
to~\eqref{NLS} and \eqref{NLS2}, using Proposition \ref{PROP:LWP3}.
In particular, we prove almost sure global well-posedness
of the defocusing energy-critical NLS \eqref{NLS} below the energy space
(Theorem \ref{THM:GWP}).
As for NLS \eqref{NLS2} without gauge invariance, 
we  use Proposition \ref{PROP:LWP3}
to construct finite time blowup solutions below the critical regularity
in a probabilistic manner
(Theorem \ref{THM:4}).

\begin{remark}\rm

When $s < 1$, 
the solution map 
\[\Phi: u_0 \in H^s(\R^d) \longmapsto u \in C([-T, T]; H^s(\R^d))\]

\noi
is not continuous for \eqref{NLS} and is not even well defined
for \eqref{NLS2}; see \cite{CCT, II15}.
Once we view $z^\o = S(t) \phi^\o$
as a probabilistically pre-defined data, 
we can  factorize the solution map for \eqref{NLS3} as
\[u_0 = v_0 + \phi^\o \in H^s(\R^d) 
\longmapsto (v_0, z^\o ) \longmapsto v \in C([-T_\o, T_\o]; H^1(\R^d)),\]

\noi
where the first map can be viewed as a universal lift map
and the second map is the solution map $\Psi$ 
to \eqref{NLS4},
which is in fact continuous in $(v_0, z^\o )\in H^1(\R^d) \times S^s([0, T])$, 
where $S^s([0, T]) \subset C([0, T]; H^s(\R^d))$ is the intersection of 
suitable space-time function spaces.  See~\eqref{S0} below for example.
We also point out that under this factorization,
it is clear that the probabilistic component appears only in the first step
while the second step is entirely deterministic.

One can go further and introduce more probabilistically pre-defined objects
in order to improve the regularity threshold.
In the context of the cubic NLS on $\R^3$ \cite{BOP3},  
the first and third authors (with B\'enyi) decomposed
$u$ as $u = z_1^\o + z_3^\o + v$,
where $z_1^\o = S(t) \phi^\o$
and 
$z_3^\o = -i \int_0^t S(t - t') |z_1|^2 z_1(t') dt'$,
thus leading to the following factorization:
\[u_0 = v_0 + \phi^\o \in H^s(\R^3)
\longmapsto (v_0, z_1^\o, z_3^\o ) \longmapsto v \in C([-T_\o, T_\o]; H^1(\R^3)).\]

\noi
The introduction of 
the higher order pre-defined object $z_3$
allowed us to lower the regularity threshold from the previous work  \cite{BOP2}.
For NLS  with non-algebraic nonlinearities such as~\eqref{NLS} and \eqref{NLS2}, 
it is not clear how to introduce a further decomposition at this point.
This is due to the non-smoothness of the nonlinearities.
If one has an algebraic (or analytic) nonlinearity, 
then a Picard iteration yields analytic dependence,
thus enabling us to write a solution as a power series
in terms of initial data, at least in theory.
See \cite{Christ, Onorm}.
On the other hand, if a  nonlinearity is non-smooth, 
then a Picard iteration does not yield analytic dependence,
which makes it hard to find a higher order term.

More recently, the first author (with Tzvetkov and Wang)
proved invariance of the white noise for the (renormalized) cubic fourth order NLS on the circle
\cite{OTW}.
In this work, we introduced an infinite sequence  $\{ z_{2j-1}\}_{j \in \NB}$
of pre-defined objects of order $2j-1$ (depending only on the random initial data) 
and wrote $u = \sum_{j = 1}^ \infty z_{2j-1} + v$, thus considering  the following factorization:
\[u_0^\o \in H^s(\T) 
\longmapsto (z_1^\o, z_3^\o, z_5^\o, \dots ) \longmapsto v \in C(\R; H^s(\T)),\]

\noi
for $s < -\frac 12$,
where $u_0^\o$ is the Gaussian white noise on the circle.
We conclude this remark by pointing out an analogy of this factorization of the ill-posed solution map
to that in the rough path theory \cite{FH}
and more recent studies on stochastic parabolic PDEs
\cite{GP, Hairer}.

\end{remark}

\subsection{Almost sure global well-posedness of the 
defocusing energy-critical   NLS below the energy space}

In this subsection, we consider 
the energy-critical NLS \eqref{NLS}
in the defocusing case (i.e.~with the $+$ sign).
Let us first recall 
the known related result in this direction.
In \cite{BOP2}, the first and third authors (with B\'enyi)
studied the global-in-time behavior of solutions to the defocusing energy-critical cubic NLS \eqref{NLS}
on $\R^4$.
By implementing 
 the probabilistic perturbation theory,
 we  proved 
conditional almost sure global well-posedness
of the defocusing energy-critical  cubic NLS on $\R^4$,
assuming the following energy bound on the residual part $v = u - z$:

\medskip
\noi
{\bf Energy bound:}
Given any $T, \eps > 0$, there exists $R = R(T, \eps)$ and $\O_{T, \eps} \subset \O$ such that
\begin{itemize}
\item[(i)]
$P(\O_{T, \eps}^c) < \eps$, and
\item[(ii)]
If $v = v^\o$ is the solution to \eqref{NLS1a}
for $\o \in \O_{T, \eps}$, then
the following {\it a priori} energy estimate holds:
\begin{equation}
 \|v(t) \|_{L^\infty([0, T]; H^1(\R^d))} \leq R(T, \eps).
\label{HypA}
 \end{equation}

\end{itemize}

\medskip

The main ingredient in this conditional almost sure global well-posedness
result in \cite{BOP2} 
is a perturbation lemma (see Lemma \ref{LEM:perturb} below).
Assuming the energy bound \eqref{HypA} above, 
we iteratively applied the perturbation lemma in the  probabilistic setting
to show that a solution can be extended to a time depending
only on the $H^1$-norm of the residual part~$v$.
Such a perturbative approach  was previously used by
Tao-Vi\c{s}an-Zhang \cite{TVZ} and Killip-Vi\c{s}an with the first  and third authors \cite{KOPV}.
The main novelty in \cite{BOP2} was 
 an application of such a technique in the probabilistic setting,
 allowing us to study the long time behavior of solutions
when there is no invariant measure available for the problem.\footnote{It is worthwhile to mention that
the conditional almost sure global well-posedness in \cite{BOP2}
and Theorem \ref{THM:GWP} below exploit
certain  ``invariance'' property of 
the distribution of the linear solution $S(t) \phi^\o$;
the distribution of
$ S(t) \phi^\o $ on an interval $[t_0, t_0 + \tau*]$
(measured in a suitable space-time norm) depends only on the length $\tau_*$ of the interval.
In \cite{CO}, similar invariance of the distribution of the random linear solution
played an essential role in proving  almost sure global well-posedness.
}

This probabilistic perturbation method can be easily adapted to other critical equations.
In \cite{POC, OP}, 
by establishing the energy bound \eqref{HypA}, 
we implemented the probabilistic perturbation theory 
in the context of the defocusing energy-critical  NLW on $\R^d$, $d = 3, 4, 5$, 
and proved almost sure global well-posedness.

For our problem at hand, 
Proposition \ref{PROP:LWP3} (more precisely Lemma \ref{LEM:LWP4} below)
allows us to 
repeat the argument in~\cite{BOP2}.
Furthermore, we  show that the energy bound \eqref{HypA} 
holds true for $d = 5, 6$ in the defocusing case
and hence we  prove the following almost sure global well-posedness
of the defocusing energy-critical NLS \eqref{NLS}.

\begin{theorem}\label{THM:GWP}
Let $d=5, 6$ and set $s_*= s_*(d)$ by 
\[ \textup{(i)} \ s_* = \frac{63}{68} \ \text{ when $ d= 5$}
\qquad \text{and}\qquad 
\textup{(ii)} \ s_* = \frac{20}{23}
\ \text{ when $ d= 6$.}\]

\noi
Given $\phi \in H^s (\R ^d)$, 
$ s_* <s<1$, 
let $\phi^{\omega}$ be its Wiener randomization defined in \eqref{rand}.
Then, 
 the defocusing energy-critical  NLS \eqref{NLS}  on $\R^d$
 is almost surely globally  well-posed
with respect to the random initial data $\phi^\o$.

More precisely,
there exists a set $\Sigma \subset \O$
with $P(\Sigma) = 1$
such that,
for each $\o \in \Sigma$, there exists a (unique) global-in-time  solution $u$
to \eqref{NLS}
with $u|_{t = 0} = \phi^\o$
in the class
\[ S(t) \phi^\o + C(\R; H^1 (\R^d)) \subset C(\R;H^s(\R^d)).\]

\end{theorem}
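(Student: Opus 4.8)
The plan is to adapt the probabilistic perturbation scheme of \cite{BOP2} (see also \cite{TVZ, KOPV}). Write $u = z^\o + v$ with $z^\o = S(t)\phi^\o$, so that the residual term $v$ solves the perturbed NLS~\eqref{NLS1a} with zero initial data; by Proposition~\ref{PROP:LWP3} with $v_0 = 0$ (equivalently, by Lemma~\ref{LEM:LWP4}), almost surely there is a unique maximal solution $v \in C([0, T^*); H^1(\R^d))$, together with the blowup alternative~\eqref{blowup}. Hence it suffices to show that for every finite $T > 0$ and every $\eps > 0$ there is $\O_{T,\eps} \subset \O$ with $P(\O_{T,\eps}^c) < \eps$ such that $v$ extends to $[0, T]$ for $\o \in \O_{T,\eps}$ (the past, $[-T, 0]$, being treated symmetrically by time reversal). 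Granting this, the set $G_T$ on which $v$ exists on $[0, T]$ has full probability, so $\Sigma := \bigcap_{n \in \NB}(G_n \cap G_{-n})$ satisfies $P(\Sigma) = 1$; on $\Sigma$ the solution $u = z^\o + v$ is then global and lies in the asserted class $S(t)\phi^\o + C(\R; H^1(\R^d)) \subset C(\R; H^s(\R^d))$, with uniqueness inherited from the local theory.

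Fix $T, \eps > 0$. The argument rests on two ingredients. The first is the a priori energy bound~\eqref{HypA}, which we establish below for $d = 5, 6$ in the defocusing case whenever $s > s_*(d)$: there is $R = R(T, \eps, \|\phi\|_{H^s})$ such that, on the maximal interval of existence, the residual term satisfies $\|v(t)\|_{H^1(\R^d)} \leq R$ off an exceptional set of probability $< \eps/2$. The second is the perturbation lemma, Lemma~\ref{LEM:perturb}, which compares $v$ (a solution of the forced equation~\eqref{NLS1a}) with a genuine solution $w$ of the deterministic defocusing energy-critical NLS~\eqref{NLS}: if $w(t_0) = v(t_0)$, if the relevant space-time norm of $w$ on $[t_0, t_0 + \tau]$ is $\leq L$, and if the $z^\o$-driven error $\N(v + z^\o) - \N(v)$ is sufficiently small there in the relevant dual Strichartz norm — a smallness quantified only through $L$ — then $v$ is defined on $[t_0, t_0 + \tau]$, stays close to $w$, and in particular $\| v \|_{L^{q_d}_t W^{1, r_d}_x([t_0, t_0 + \tau])}$ is controlled. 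By the deterministic global well-posedness (with global-in-time Strichartz bounds) for the defocusing energy-critical NLS~\cite{V} and the energy bound~\eqref{HypA}, the solution $w$ of~\eqref{NLS} emanating from $v(t_0)$ is global with global space-time norm $\leq L = L(R)$, uniformly over $t_0 \in [0, T]$; one then partitions $[0, T]$ into $\sim T\,\tau^{-1}$ intervals of a length $\tau = \tau(L)$ small enough for Lemma~\ref{LEM:perturb} to apply, and iterates, using the energy bound to keep $\|v(t_0)\|_{H^1} \leq R$ at each breakpoint and the blowup alternative~\eqref{blowup} to conclude $T^* > T$. Since the distribution of $S(t)\phi^\o$ on an interval depends only on its length, the probability that the error exceeds the required smallness is the same on each subinterval; by the probabilistic Strichartz estimates it decays like $\exp(-c\,\tau^{-\theta})$ for some $\theta > 0$, which beats the number $\sim T\,\tau^{-1}$ of subintervals, so — after possibly enlarging $R$, which leaves~\eqref{HypA} valid — a union bound contributes $< \eps/2$. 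Intersecting the two exceptional sets yields $\O_{T,\eps}$.

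The crux is therefore the energy bound~\eqref{HypA}. I would differentiate the energy $E(v(t)) = \tfrac12 \|\nabla v(t)\|_{L^2}^2 + \tfrac{d-2}{2d}\|v(t)\|_{L^{2d/(d-2)}}^{2d/(d-2)}$ of the residual term along the flow of~\eqref{NLS1a}. Since $E$ would be conserved were $v$ a solution of the genuine NLS~\eqref{NLS}, the derivative $\frac{d}{dt}E(v(t))$ is driven entirely by the discrepancy $\N(v + z^\o) - \N(v)$ and, after integration by parts, reduces to a finite sum of multilinear space-time integrals, each carrying at least one factor of $z^\o$ or $\nabla z^\o$ and otherwise only factors of $v$ and $\nabla v$; bounding the $v$-factors by deterministic Strichartz estimates and the $z^\o$-factors by the probabilistic Strichartz estimates — the gain of integrability from the Wiener randomization — one closes a Gronwall/bootstrap argument on the good set. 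I expect this step to be the main obstacle: because the nonlinearity is non-algebraic, the discrepancy does not split into finitely many monomials in $v, \cj v, z^\o, \cj{z^\o}$, so the multilinear integrals must instead be controlled via fractional-difference inequalities such as
\[
\big|\, |a + b|^{\frac{4}{d-2}}(a+b) - |a|^{\frac{4}{d-2}} a \,\big| \lesssim |b|^{\frac{d+2}{d-2}} + |a|^{\frac{4}{d-2}} |b|
\]
and their gradient analogues, and it is here that the restriction $d = 5, 6$ — so that $\tfrac{4}{d-2} \in \{1, \tfrac43\}$ is at least $1$ and no negative power of $|v + z^\o|$ arises upon differentiating — enters. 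The loss in this cruder accounting, combined with the requirement that no fractional derivative fall on $z^\o$ (so that only the $H^s$-regularity of $\phi$ is used and one stays below the energy-critical scaling), is precisely what forces the thresholds $s_* = \tfrac{63}{68}$ for $d = 5$ and $s_* = \tfrac{20}{23}$ for $d = 6$ in place of the full range $s > 1 - \tfrac1d$ of Theorem~\ref{THM:LWP1}; optimizing the Hölder exponents in the deterministic/probabilistic Strichartz pairings inside the energy-increment estimate is the bookkeeping that pins down $s_*(d)$.
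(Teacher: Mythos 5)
Your overall scheme (residual $v = u - z^\o$, a priori energy bound $\Rightarrow$ perturbation lemma $\Rightarrow$ iteration $\Rightarrow$ Borel--Cantelli) matches the paper's Section~\ref{SEC:energy}. But the ``crux'' — the energy bound~\eqref{HypA} — has a genuine gap in your proposal, and it is precisely where the paper has to introduce a new idea.

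You propose to differentiate the standard energy $E(v) = \tfrac12\|\nb v\|_{L^2}^2 + \tfrac{d-2}{2d}\|v\|_{L^{2d/(d-2)}}^{2d/(d-2)}$ for both $d = 5, 6$. This fails for $d = 5$. Computing $\dt E(v)$ as in \eqref{intro1} yields two contributions $\1 + \II$, where $\II = -\Re i \int |v+z|^{\frac{4}{d-2}}(v+z)\,|v|^{\frac{4}{d-2}}\cj v\,dx$ comes from the potential term of $E(v)$. When $d = 6$, the worst part of $\II$ is $\int |v|^3|z|\,dx$, which H\"older controls by the potential energy $\int |v|^3\,dx$ and $\|z\|_{L^\infty}$. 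When $d = 5$, the worst part is $\int |v|^{11/3}|z|\,dx$: since the potential energy only supplies $\int |v|^{10/3}\,dx$ and $\tfrac{11}{3} > \tfrac{10}{3}$, any H\"older pairing forces either a negative exponent on $z$ or more than one power of $E(v)$, so the Gronwall scheme cannot close. The paper circumvents this by using the modified energy $\E(v) = \tfrac12\int|\nb v|^2\,dx + \tfrac{3}{10}\int|v+z|^{10/3}\,dx$ for $d = 5$ (see \eqref{intro3} and Proposition~\ref{PROP:energy2}): since $u = v + z$ solves the true NLS, the problematic $\II$-type contribution cancels entirely, at the cost of introducing $\Dl z$ into the kinetic-energy increment — which turns out to be no worse than the term in \eqref{intro2}. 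Without this modification, your $d=5$ argument does not close.

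A second, more technical omission: you suggest closing the Gronwall estimate by ``bounding the $v$-factors by deterministic Strichartz and the $z^\o$-factors by probabilistic Strichartz.'' This is not enough for the dominant term $\int_0^T\big\||v|^{\frac{4}{d-2}}\nb z\big\|_{L^2_x}^2\,dt$ from \eqref{intro2}: since $z \notin W^{1,p}$ almost surely, the full gradient cannot sit on $z$, yet no derivative lands on $v$ either. The paper's resolution (Propositions~\ref{PROP:energy} and~\ref{PROP:energy2}) splits this factor into a $\theta$-power and a $(1-\theta)$-power. On the $\theta$-power it applies the bilinear Strichartz refinement (Lemma~\ref{LEM:biStv}) to trade regularity between $v$ and $z$, then substitutes the Duhamel formula for $v$ to raise its homogeneity so the result is absorbed by the potential energy; the $(1-\theta)$-power absorbs the remaining derivative on $v$ into the kinetic energy. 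Optimizing $\theta$ and the frequency threshold $\g$ against the constraints \eqref{OO6a}, \eqref{OO9}, \eqref{OO10} (resp.\ \eqref{XX8}, \eqref{XX11}, etc.) is exactly what produces $s_* = \tfrac{20}{23}$ (resp.\ $\tfrac{63}{68}$); your ``optimizing H\"older exponents'' gestures at this but leaves out the bilinear estimate and the Duhamel substitution, which are the steps that make the exponent count close. Finally, the explanation you offer for the restriction to $d = 5,6$ (that $\tfrac{4}{d-2}\geq 1$) is not the operative one here: the paper's obstruction at $d = 4$ is that the right-hand side of \eqref{intro2} would read $\int |v|^4 |\nb z|^2\,dx$ and no derivative can be transferred off $z$, not a sign/power issue in the chain rule.
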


In a recent preprint \cite{KMV}, 
Killip-Murphy-Vi\c{s}an 
 studied the defocusing energy-critical cubic NLS with randomized initial data
when $d = 4$.  In particular, under the radial assumption, 
they  proved
almost sure global well-posedness and scattering below the energy space
by implementing a double bootstrap argument
intertwining  the energy and Morawetz estimates.

Our main goal in Theorem \ref{THM:GWP}
is to simply prove almost sure global well-posedness
(without scattering) by establishing the energy bound \eqref{HypA}.
In particular, 
Theorem \ref{THM:GWP}
establishes  the first almost sure global well-posedness result of the defocusing energy-critical NLS \eqref{NLS} below the energy space without the radial assumption.
As mentioned above, the main difficulty in proving Theorem \ref{THM:GWP}
is to establish  the a priori energy bound \eqref{HypA}.
For this purpose, let us recall  the following conservation laws
for \eqref{NLS}:
\begin{align*}
\text{Mass: }&  M(u)(t) = \int_{\R^d} |u(t, x)|^2 dx,\\
\text{Energy: } &  E(u)(t) = \frac 12 \int_{\R^d} |\nb u(t, x)|^2 dx
+\frac {d-2}{2d}  \int_{\R^d} |u(t, x)|^\frac{2d}{d-2} dx.
\end{align*}

\noi
The main task is to control the growth of the energy $E(v)$
for the residual part $v = u - z$
by estimating the time derivative of $E(v)$.
We first point out that while $M(v)$ is not conserved, 
one can easily establish a global-in-time bound on $M(v)$.
See Lemma \ref{LEM:mass}.

By a direct computation with \eqref{NLS1a}, we have
\begin{align}
\dt E(v) 
& =   \Re i \int \big\{|v + z|^\frac{4}{d-2} (v+ z) - |v|^\frac{4}{d-2} v\big\} \Dl \cj vdx\notag\\
& \hphantom{X}- \Re i \int |v+z|^\frac{4}{d-2}(v+z) |v|^\frac{4}{d-2}\cj vdx\notag\\
& =: \1 + \II.
\label{intro1}
\end{align}

\noi
We need to estimate $\dt E(v)$
by $E(v)$ and various norms of the random linear solution $z = S(t) \phi^\o$.
Moreover, we are allowed to use  at most one power of $E(v)$
in order to close a Gronwall-type argument.
Note that the energy $E(v)$ consists of two parts.
On the one hand, while 
the kinetic part controls the derivative of $v$,
its homogeneity (= degree) is low
and hence can not be used to control a nonlinear term of a high degree (in $v$).
On the other hand, 
the potential part has a higher homogeneity
but it can not be used to control any derivative. 
Hence, we need to combine
the kinetic and potential parts of the energy
in an intricate manner.

The main contribution to $\1$ in \eqref{intro1} is given by a term of the form:
\begin{align}
 \int |v|^\frac{4}{d-2} |\nb v \cdot \nb z|dx 
\les \int |\nb v|^2 dx + 
\big\||v|^\frac{4}{d-2}  \nb z\big\|_{L^2_x}^2.
\label{intro2}
\end{align}

\noi
In order to estimate the second term
on the right-hand side, 
we integrate in time and perform multilinear space-time analysis.
More precisely, 
we divide the second term on the right-hand side of \eqref{intro2} 
into a $\theta$-power 
and a $(1-\theta)$-power for some $\theta = \theta (s) \in (0, 1)$
and estimate them in different manners.
As for the $\theta$-power, 
we apply the refinement of the bilinear Strichartz estimate (Lemma \ref{LEM:biStv}),
substitute the Duhamel formula for $v$ (yielding a higher order term in $v$),
and control the resulting contribution (by ignoring the derivative on $v$)
by the potential part of the energy.
We then use the $(1-\theta)$-power to absorb the derivative on $v$
from the $\theta$-power
and control the resulting contribution by the kinetic part of the energy
and the mass.
See Propositions~\ref{PROP:energy} and~\ref{PROP:energy2}.

When $d = 6$, the main contribution to the second term $\II$ in \eqref{intro1}
is given by $\int |v|^3 |z| dx$, 
which can be controlled by (the potential part of) the energy $E(v)$.
On the other hand, 
when $d = 5$, the main contribution to  the second term $\II$ in \eqref{intro1}
is given by $\int |v|^{\frac{11}3} |z| dx$, 
which we can not control by  the energy $E(v)$.
In order to overcome this problem, we use the following modified energy
when $ d= 5$:
\begin{align}
\E(v) = \frac 12 \int |\nb v |^2 dx + \frac 3{10} \int |v+z|^\frac{10}{3} dx .
\label{intro3}
\end{align}

\noi
The use of this modified energy $\E(v)$
eliminates the contribution $\II$ in \eqref{intro1}
at the expense of introducing $\Dl z$ in $\1$.
It turns out, however, the worst term is still given by 
the second term on the right-hand side of \eqref{intro2}
and hence there is no loss in using the modified energy $\E(v)$.

Lastly, we point out  the following.
On the one hand, the regularity for almost sure local well-posedness
in Theorem \ref{THM:LWP1} is worse when $d = 6$.
On the other hand, the regularity   for almost sure global well-posedness
in Theorem \ref{THM:GWP} is worse when $d = 5$:
\[ \frac{20}{23}\approx 0.8696 < 
\frac{63}{68} \approx 0.9265.\]

\noi
This is due to the fact that the main contribution \eqref{intro2}
to the energy estimate
comes with a higher order term in $v$ when $d = 5$.
In fact, when $d = 4$, our argument completely breaks down.
In this case, the left-hand side of \eqref{intro2} becomes
\begin{align*}
 \int |v|^2 |\nb v \cdot \nb z|dx 
\les \int |\nb v|^2 dx + 
\int |v|^4 |\nb z|^2 dx. 
\end{align*}

\noi
Recalling that the potential energy is given by $\frac{1}{4}\int |v|^4 dx$, 
it is easy to see that we can not pass a part of the derivative on 
$z$ to $|v|^4$ in the second term on the right-hand side 
and hence it is not possible to bound it by $\big(E(v)\big)^\alpha$, $\al \leq 1$,
since $z \notin W^{1, p}(\R^4)$ for any $p$,  almost surely.
For this problem, some other space-time control
such as the (interaction) Morawetz estimate\footnote{See \cite{TVZ}
for the interaction Morawetz estimate for NLS with a perturbation.}
is required.\footnote{In a recent preprint \cite{KMV}, 
Killip-Murphy-Vi\c{s}an 
proved
almost sure global well-posedness and scattering below the energy space
for the defocusing energy-critical cubic NLS on $\R^4$ in the radial setting,
where the Morawetz estimate (among other tools available in the radial setting) played an important role.}

\begin{remark}\rm 
In \cite{LM2}, 
L\"uhrmann-Mendelson
used a  modified energy 
with the potential part  given by $\frac{1}{p+1} \int |v + z|^{p+1} dx$
in studying 
 the defocusing energy-subcritical NLW on $\R^3$ ($3 < p< 5$):
\[ \dt^2 u - \Dl u + |u|^{p-1} u = 0\]

\noi
with randomized initial data below the scaling critical regularity.
In particular, 
they adapted the technique from \cite{OP}
and proved almost sure global well-posedness
in $H^s(\R^3) \times H^{s-1}(\R^3)$ for $\frac{p-1}{p+1} < s<1$
by 
 establishing an energy estimate 
for the modified energy.
We point out, however, that 
the use of the modified energy for NLW in \cite{LM2}
is not necessary.  On the contrary,  it provides a worse regularity restriction
than the same argument with the standard energy
for NLW.
In fact, Sun-Xia \cite{SX} independently studied the same problem\footnote{While
the main result in \cite{SX} is stated on the three-dimensional torus $\T^3$,
the same result holds on $\R^3$ by the same proof.}
with the standard energy and proved almost sure global well-posedness
with a better regularity threshold: $\frac{p-3}{p-1} < s < 1$,
which  
interpolates the almost sure global well-posedness
results by Burq-Tzvetkov ($p = 3$) in \cite{BT3} and the first and third authors 
($p = 5$) in \cite{OP}.

While our use of the modified energy $\E(v)$ in \eqref{intro3}
removes the issue with the time derivative of the potential part of the energy
(i.e.~$\II$ in \eqref{intro1}), 
it does not worsen the regularity threshold in the sense that
the worst term is still given by \eqref{intro2}.

\end{remark}

\subsection{Probabilistic construction of  finite time blowup solutions
below the critical regularity}

In this subsection,  we focus on NLS \eqref{NLS2} without gauge invariance.
As compared to the standard NLS \eqref{NLS} with the gauge invariant nonlinearity, 
the equation \eqref{NLS2} is less understood, in particular
due to lack of structures such  as conservation laws.

In recent years, starting with the work by Ikeda-Wakasugi \cite{IW}, 
there has been some development
in the construction of finite time blowup solutions for \eqref{NLS2}, including the case of small initial data.
See also \cite{O1, O2, II2}. 
While there are some variations, 
the criteria for finite time blowup solutions
are very different from those for the standard  NLS \eqref{NLS}
and they are given in terms of a condition on the sign of 
the product of 
the real part (and the imaginary part, respectively) of the coefficient $\ld \in \C\setminus \{0\}$
in \eqref{NLS2}
and the imaginary part (and the real part, respectively)
of (the spatial integral of) an initial condition.
We now recall the result of particular interest due to Ikeda-Inui \cite[Theorem 2.3 and Remark 2.1]{II15}.

Given $v_0 \in H^1(\R^d)$, 
consider NLS  \eqref{NLS2} without gauge invariance 
equipped with an initial condition of the form
$\phi = \al v_0$, $\al \geq 0$.
Moreover, assume that $v_0$ satisfies
\begin{align}
 (\Im \ld)(\Re v_0)(x) & \geq \ind_{|x| \leq 1 } |x|^{-k} \quad \text{for all }x \in \R^d, 
 \label{A1}\\
\text{or } \ \ -  (\Re \ld)(\Im v_0)(x) & \geq \ind_{|x| \leq 1 } |x|^{-k} \quad \text{for all }x \in \R^d
 \label{A2}
 \end{align}

\noi
for some positive $k < \frac d2 -1$.
Then,  there exists $\al_0  = \al_0(d, k , |\ld|)>0$
such that, for any $\al > \al_0$,  
the solution $u= u(\al)$ to \eqref{NLS2} with $u|_{t = 0} = \al v_0$
blows up forward in  finite time.
If we denote $T^*(\al) > 0$ to be the forward maximal time of existence, then
the following estimate holds:
\begin{align}
T^*(\al) \leq C \al^{-\frac{1}{\kappa}}
\label{A3}
\end{align}

\noi
for all $\al > \al_0$, where $\kappa = \frac{d-2}{4} - \frac k2$.
Moreover, we have
\begin{align*}
\lim_{T\to T^*} \| u \|_{L^{q_d}([0, T);W_x^{1, r_d})} = \infty,
\end{align*}

\noi
where $(q_d,r_d)$ is as in \eqref{qr0}.
A similar statement holds for the negative time direction
if we replace \eqref{A1} and \eqref{A2} by 
 $- (\Im \ld)(\Re v_0)(x)  \geq \ind_{|x| \leq 1 } |x|^{-k}$
 and $  (\Re \ld)(\Im v_0)(x)  \geq \ind_{|x| \leq 1 } |x|^{-k}$, 
 respectively.

In the following, we fix $v_0$ satisfying \eqref{A1} or \eqref{A2}
and consider \eqref{NLS2}
with $u|_{t = 0} = \al v_0 + \eps \phi^\o$,
where $\phi^\o$ is the Wiener randomization of some fixed $\phi \in H^s(\R^d)\setminus H^1(\R^d)$, 
$s < 1$, 
$d = 5, 6$.
Namely, we study stability 
of the finite time blowup solution constructed in \cite{II15}
under a rough perturbation in a probabilistic manner.

\begin{theorem}\label{THM:4}
Let $d=5,6$, $1-\frac{1}{d}<s<1$, and $k < \frac d2 - 1$.
Given $\phi \in H^s (\R ^d)$, let $\phi^{\omega}$ be its Wiener randomization defined in \eqref{rand}.
Fix  $v_0 \in H^1(\R^d)$, 
 satisfying \eqref{A1} or \eqref{A2}.
Then, 
for each $R> 0$ and $ \eps > 0$, there exists $\O_{R, \eps} \subset \O$ with 
\[ P(\O_{R, \eps}^c) \leq  C\exp \bigg( - c \frac{R^2}{\eps^2 \|\phi\|_{L^2}^2}\bigg)\]

\noi
and $\al_0 = \al_0(d, k, |\ld|, R, \eps)>0$ such that for each $\o \in \O_{R, \eps}$
and any $\al > \al_0$, 
the solution $u = u^\o$ to \eqref{NLS2} with initial data
\[ u|_{t = 0} = \al v_0 + \eps \phi^\o\]

\noi
blows up forward in finite time
with the  forward maximal time $T^*(\al) $ of existence
satisfying~\eqref{A3}, where the implicit constant depends only on $R> 0$.
Moreover, we have  
\begin{align}
\lim_{T\to T^*} \| u - \eps z^\o\|_{L^{q_d}([0, T);W_x^{1, r_d})} = \infty,
\label{A6}
\end{align}

\noi
where  $z^\o = S(t) \phi^\o$.

\end{theorem}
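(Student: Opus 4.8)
The plan is to combine the deterministic blowup criterion of Ikeda--Inui \cite{II15} recalled above with the probabilistic local theory of Proposition \ref{PROP:LWP3}, viewing the perturbed solution through its residual part. First I would fix $v_0$ satisfying \eqref{A1} or \eqref{A2} and, given $R,\eps>0$, set $\O_{R,\eps}$ to be the event on which the random linear evolution $z^\o = S(t)\phi^\o$ is controlled in the relevant space-time norm. Concretely, using the probabilistic Strichartz estimates (the gain of integrability from the Wiener randomization) one obtains that
\[
\big\| S(t)\phi^\o\big\|_{L^{q_d}_{t}([0,1];W^{1,r_d}_x)} \le R
\]
off an event of probability at most $C\exp\!\big(-cR^2/(\eps^2\|\phi\|_{L^2}^2)\big)$, which is exactly the tail bound claimed; one must be slightly careful that the relevant norm for $\eps z^\o$ scales linearly in $\eps$, which accounts for the $\eps^2$ in the exponent. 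On this event, Proposition \ref{PROP:LWP3} applied to \eqref{NLS3} with $\N(u)=\ld|u|^{\frac{d+2}{d-2}}$ produces a local solution $u = \eps z^\o + v$ with $v$ solving \eqref{NLS4} with data $v_0$ rescaled, and gives the blowup alternative \eqref{blowup}: either $T^*=\infty$ or the $L^{q_d}_tW^{1,r_d}_x$-norm of $u-\eps z^\o = v$ diverges. So \eqref{A6} follows once we show $T^*<\infty$.

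To force $T^*<\infty$, the idea is that for $\al$ large the deterministic blowup mechanism of \cite{II15} overwhelms the bounded perturbation $\eps z^\o$. I would run the Ikeda--Inui ODE argument not for $u$ directly but for the quantity obtained by integrating $u$ against (a truncation of) the blowup test function, i.e.\ track $M(t) := \Im\!\int \chi(x)\,u(t,x)\,dx$ (or its real part, according to whether \eqref{A1} or \eqref{A2} holds), where $\chi$ is the standard cutoff adapted to $|x|\le 1$. Pairing \eqref{NLS2} with $\chi$ produces the usual differential inequality $\dot M \gtrsim \big(\text{sign term}\big)\int \chi |u|^{\frac{d+2}{d-2}} - (\text{error from }\Dl\chi)$, and by Jensen / Hölder the main term is bounded below by $c\,|M|^{\frac{d+2}{d-2}}$ up to lower-order corrections. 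The new feature compared to \cite{II15} is that the perturbation $\eps z^\o$ contributes (i) an additive shift to $M(0)$ of size $O(\eps\|z^\o\|)$ and (ii) a perturbative term in the differential inequality. Since $M(0)\ge c\,\al - C\eps\|z^\o(0)\|_{L^1_{\rm loc}}\ge \tfrac{c}{2}\al$ for $\al>\al_0(R,\eps)$, and since on $\O_{R,\eps}$ the perturbative error is controlled by the fixed constant $R$, the Riccati-type inequality still blows up, and the comparison ODE $\dot y = c' y^{\frac{d+2}{d-2}}$ with $y(0)\gtrsim\al$ explodes by time $T^*(\al)\lesssim \al^{-1/\kappa}$, $\kappa = \tfrac{d-2}{4}-\tfrac{k}{2}$, which is \eqref{A3}; keeping the implicit constant dependent only on $R$ is straightforward since $\eps$ enters only through $\al_0$.

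The main obstacle, I expect, is reconciling the two viewpoints: the blowup argument naturally controls $u$ (via a weak/integrated quantity), whereas the local well-posedness theory controls the residual $v = u-\eps z^\o$ in $X^1_T$-type spaces, and one must be sure that $u$ genuinely solves \eqref{NLS2} in a sense strong enough to justify the pairing with $\chi$ and the integration by parts that moves $\Dl$ onto $\chi$. This is handled by noting that $v\in C([0,T^*);H^1)$ and $\eps z^\o\in C([0,T^*);H^s)$ with $s>1-\tfrac1d$, so $u\in C_tH^s_{\rm loc}$ and $\chi u$ is integrable with the Duhamel identity giving the needed distributional formulation; the quintic term $|u|^{\frac{d+2}{d-2}}$ lies in $L^1_{t,{\rm loc}}$ near any $t<T^*$ by the Strichartz bounds on $v$ and the a priori control on $z^\o$. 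A secondary technical point is the truncation error from $\Dl\chi$, supported on an annulus, which is dominated by a fixed power of $M$ with a smaller exponent and hence absorbed for $\al$ large exactly as in \cite{II15}. Once the differential inequality for $M$ is in place on $\O_{R,\eps}$, finite-time blowup of $M$, hence of $\|v\|_{L^{q_d}_tW^{1,r_d}_x}$ via \eqref{blowup}, and hence \eqref{A6}, follow.
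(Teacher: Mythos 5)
Your overall plan (combine Proposition \ref{PROP:LWP3} with the Ikeda--Inui blowup mechanism, working on the residual $v = u - \eps z^\o$) is the right one, and you correctly identify the need to verify that $u$ solves \eqref{NLS2} in a weak sense so that the test-function pairing is justified; the paper handles this through Definition \ref{DEF:weak} and Lemma \ref{LEM:weak2}. However, there are two substantive gaps.

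First, the definition of $\O_{R,\eps}$. You propose to control $\|S(t)\phi^\o\|_{L^{q_d}_t([0,1];W^{1,r_d}_x)}$, but this quantity is almost surely infinite when $\phi\in H^s(\R^d)\setminus H^1(\R^d)$: the Wiener randomization does not improve differentiability, so $z^\o$ does not have a full spatial derivative in $L^{r_d}_x$. The event you describe is therefore null, and the claimed tail bound (which involves only $\|\phi\|_{L^2}$) cannot hold for it. The resolution in the paper is precisely to avoid asking for any derivative of $z^\o$. Once everything is moved into the weak formulation \eqref{weak1}, the random term appears only through $|v+\eps z^\o|^{\frac{d+2}{d-2}}$ and through the error terms $\III_1,\III_2$ in \eqref{weak3}--\eqref{weak4}, which require only $\|z^\o\|_{L^1_{t,x}([0,T)\times B_{\sqrt T})}$, in turn bounded by $\|z^\o\|_{L^\infty_t L^2_x}=\|\phi^\o\|_{L^2}$ by unitarity. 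Accordingly the paper sets $\O_{R,\eps}:=\{\|\phi^\o\|_{L^2}\le\eps^{-1}R\}$, and the stated exponential tail follows from Lemma \ref{LEM:prob1} with $\lambda=\eps^{-1}R$. Missing this point (that only space-time \emph{integrability}, not differentiability, of $z^\o$ is needed) is the central gap: it is exactly what the paper highlights as the reason the theorem is provable.

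Second, the Kaplan--Glassey ODE argument you sketch (tracking $M(t)=\Im\int\chi\,u$ and deriving $\dot M\gtrsim|M|^p$) would at best give $T^*(\al)\lesssim\al^{-\frac{4}{d-2}}$, since $M(0)\sim\al$ with a $k$-independent constant. This does not recover the rate \eqref{A3} with $\kappa=\frac{d-2}{4}-\frac{k}{2}$; the smaller $\kappa$ (hence faster decay of the upper bound in $\al$) comes from scaling the \emph{spatial} cutoff as $\theta(x/\sqrt{T})$ together with the singularity $|x|^{-k}$ of $v_0$ at the origin, which is what \eqref{weak6} and the change of variables in the paper exploit. A fixed cutoff $\chi$ loses this scaling information, so the $k$-dependent estimate \eqref{A3} is not obtained. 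In addition, with a compactly supported $\chi$ the error term $\int u\,\Delta\chi^\ell$ is only controlled in a space-time averaged sense (after integrating against $\eta(t/T)$), not pointwise in $t$, which is why Ikeda--Inui and the paper use a genuinely space-time test function $\psi_T^\ell=\eta_T^\ell\theta_T^\ell$ rather than a pure ODE for $M(t)$.

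Finally, on \eqref{A6}: you correctly note that this follows from the blowup alternative for the residual (see \eqref{Y3}), which is stated for $v=u-\eps z^\o$ in $L^{q_d}_t W^{1,r_d}_x$; this is consistent with the preceding paragraph precisely because $v$ (not $z^\o$) lives in $H^1$-based spaces.
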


This result in particular allows us to construct
finite time blowup solutions {\it below} the critical regularity $s_\text{crit} = 1$.
Moreover, it can be viewed as a {\it probabilistic stability} result 
of the finite time blowup solutions in $H^1(\R^d)$ constructed in \cite{II15}
under random and rough perturbations.
Note that $P(\O_{R, \eps})\to 1$ as $\eps \to 0$.

The proof of Theorem \ref{THM:4} is a straightforward combination
of Proposition \ref{PROP:LWP3} and the finite time blowup result in \cite{II15}.
More precisely, we prove Theorem \ref{THM:4} by writing $u = \eps z + v$ 
and considering the equation for the residual term $v$:
\begin{equation}
\begin{cases}
 i\partial _t v + \Delta  v =  \ld |v+\eps z^\o|^\frac{d+2}{d-2}, \\
 v|_{t= 0} =\al v_0,
\end{cases}
\label{NLS5}
\end{equation}

\noi
where $z^\o= S(t) \phi^\o$ as before.
In view of Proposition \ref{PROP:LWP3}, 
the equation \eqref{NLS5} is almost surely locally well-posed
with a blowup alternative \eqref{blowup}.
This allows us to show that the solution $v$ 
is  a weak solution in the sense of Definition \ref{DEF:weak}
and hence to carry out the analysis in \cite{II15} with a small modification
coming from the random perturbation term.
One crucial point to note is that once
we reduce our analysis to the weak formulation in \eqref{weak1},
we only require space-time integrability of the random perturbation $z^\o$
and its differentiability plays no role.
This enables us to prove Theorem \ref{THM:4}.

%

\smallskip

We now give a brief outline of this article.
In Sections \ref{SEC:prob} and \ref{SEC:space}, we recall probabilistic and
deterministic lemmas along with the definitions of the basic  function spaces.
 We  then prove the crucial nonlinear estimates
 in Section \ref{SEC:nonlin},
and  present the proof of the almost sure local well-posedness (Theorems \ref{THM:LWP1}
and \ref{THM:LWP2})
in Section \ref{SEC:thm}.
In Section \ref{SEC:LWP2},
we prove a variant of almost sure local well-posedness (Proposition \ref{PROP:LWP3}).
In Section \ref{SEC:energy}, 
we establish the crucial energy bound \eqref{HypA}
and present the proof of  almost sure global well-posedness
of the defocusing energy-critical NLS \eqref{NLS}
(Theorem \ref{THM:GWP}).
In Section \ref{SEC:7}, 
we use Proposition  \ref{PROP:LWP3}
to construct finite time blowup solutions
below the critical regularity
in a probabilistic manner.

In view of the time reversibility of the equations, 
we only consider positive times in the following.
Moreover, in the local-in-time theory, 
the defocusing/focusing nature of \eqref{NLS}
does not play any role, so we assume that it is defocusing
(with the $+$-sign in \eqref{NLS}).
Similarly, we simply set $\ld = 1$ in~\eqref{NLS2}.

\section{Probabilistic lemmas} \label{SEC:prob}

In this section, we state the probabilistic lemmas used in this paper.
See \cite{BOP1, OP} for their proofs.
The first lemma states that the Wiener randomization  almost surely preserves
the differentiability of a given function.

\begin{lemma} \label{LEM:prob1}
Given $\phi \in H^s (\R ^d)$, let $\phi ^{\omega}$ be its Wiener randomization defined in \eqref{rand}.
Then, there exist $C,c >0$ such that
\[
P\big( \| \phi^\o \|_{H^s} > \lambda \big) \leq C \exp \bigg( - c \frac{\ld^2}{\| \phi \|_{H^s}^2} \bigg)
\]
for all $\lambda >0$.
\end{lemma}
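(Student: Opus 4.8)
The statement is a large-deviation (concentration) estimate for the $H^s$-norm of the Wiener randomization $\phi^\o = \sum_n g_n(\o)\,\psi(D-n)\phi$. The plan is to reduce everything to a single scalar Gaussian-type tail bound for a weighted sum of the independent random variables $g_n$, using the exponential moment hypothesis $\int_\R e^{\kappa x}\,d\mu_n^{(j)}(x)\le e^{c\kappa^2}$, and then optimize in the free parameter.

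First I would observe that by the disjointness (finite overlap) of the supports of the $\psi(\cdot - n)$ — only boundedly many, say $C_d$, of them overlap at any frequency $\xi$ — we have the frequency-localized Littlewood–Paley–type comparison
\[
\|\phi^\o\|_{H^s}^2 \sim \sum_{n\in\Z^d} |g_n(\o)|^2 \,\|\psi(D-n)\phi\|_{H^s}^2
\]
up to a constant depending only on $d$; more precisely one uses $\|\psi(D-n)\phi^\o\|_{H^s}=|g_n(\o)|\,\|\psi(D-n)\phi\|_{H^s}$ together with the almost-orthogonality of the pieces. Writing $c_n := \|\psi(D-n)\phi\|_{H^s}$, so that $\sum_n c_n^2 \sim \|\phi\|_{H^s}^2$, the task becomes: show $P\big(\sum_n |g_n|^2 c_n^2 > \ld^2\big)$ decays like $\exp(-c\ld^2/\sum_n c_n^2)$.

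The key step is the scalar tail estimate. Splitting $g_n$ into its real and imaginary parts, it suffices to bound $P\big(\sum_n X_n^2 c_n^2 > \mu\big)$ for independent mean-zero $X_n$ with the stated exponential moment bound. The standard route is a Chernoff/Markov argument: the hypothesis $\mathbb{E}[e^{\kappa X_n}]\le e^{c\kappa^2}$ (subgaussianity) implies, by a routine computation, a bound of the form $\mathbb{E}[e^{t X_n^2}]\le C'$ for $0\le t \le t_0$ for some absolute $t_0>0$; hence by independence $\mathbb{E}\big[\exp\big(t\sum_n c_n^2 X_n^2\big)\big] = \prod_n \mathbb{E}[e^{t c_n^2 X_n^2}]$, and choosing $t = t_0/\big(2\sum_n c_n^2\big)$ (so that each $t c_n^2 \le t_0/2$) together with the elementary estimate $\log \mathbb{E}[e^{u X_n^2}] \lesssim u\,\mathbb{E}[X_n^2] \lesssim u$ for small $u$ gives $\mathbb{E}\big[\exp\big(t\sum_n c_n^2 X_n^2\big)\big] \le \exp\big(C \sum_n t c_n^2\big) \le e^{C t_0/2}$. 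Then Markov's inequality yields
\[
P\Big(\sum_n c_n^2 X_n^2 > \mu\Big) \le e^{-t\mu}\,\mathbb{E}\Big[\exp\Big(t\sum_n c_n^2 X_n^2\Big)\Big] \le C\exp\Big(-\frac{c\,\mu}{\sum_n c_n^2}\Big).
\]
Applying this with $\mu \sim \ld^2$ and $\sum_n c_n^2 \sim \|\phi\|_{H^s}^2$, and recalling $\|\phi^\o\|_{H^s}^2 \sim \sum_n c_n^2 X_n^2$ (summing the real and imaginary contributions), produces exactly $P(\|\phi^\o\|_{H^s} > \ld) \le C\exp(-c\ld^2/\|\phi\|_{H^s}^2)$.

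The main obstacle — though it is mild — is making the passage from the subgaussian moment bound on $X_n$ to a uniform (in $n$) bound on $\mathbb{E}[e^{t X_n^2}]$ for $t$ in a fixed interval $[0,t_0]$ genuinely quantitative and independent of $n$; this requires care because one needs the constant $c$ in the hypothesis, not the law of $X_n$, and one must control $\mathbb{E}[X_n^2]$ (which is $\le 2c$ by differentiating the moment generating function bound twice at $0$, or by $\mathbb{E}[X_n^2]=\int x^2\,d\mu_n^{(j)} \le \sup_\kappa \kappa^{-2}(2e^{c\kappa^2}-2)$-type reasoning). Since this is precisely the classical estimate underlying the probabilistic Strichartz estimates and is carried out in \cite{BOP1, OP}, I would simply cite those references for the scalar large-deviation lemma and spend the remaining lines on the (equally routine) reduction via almost-orthogonality of the Wiener projections. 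In fact, one can bypass the square-function comparison entirely by working directly: $\|\phi^\o\|_{H^s} \le \big(\sum_n |g_n|^2\big)^{1/2}\cdot(\text{finite overlap})^{1/2}\cdot \max$-type bounds are too lossy, so the almost-orthogonality estimate above is the right tool, and it is standard.
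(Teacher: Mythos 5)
The paper does not prove Lemma~\ref{LEM:prob1} itself; it cites \cite{BOP1, OP}. The standard proof in those references (following Burq--Tzvetkov) proceeds through $L^p(\O)$ moment bounds: a Khinchin-type large-deviation lemma gives $\big\|\sum_n g_n c_n\big\|_{L^p(\O)} \lesssim \sqrt{p}\,\big(\sum_n |c_n|^2\big)^{1/2}$ for $p \geq 2$, then Minkowski's integral inequality yields $\big\| \|\phi^\o\|_{H^s} \big\|_{L^p(\O)} \lesssim \sqrt{p}\,\|\phi\|_{H^s}$, and finally one applies Chebyshev and optimizes in $p$ (choosing $p \sim \ld^2/\|\phi\|_{H^s}^2$) to get the Gaussian tail. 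Your proof is a genuinely different route: you reduce pointwise in $\xi$ (via Cauchy--Schwarz and the $3^d$-fold overlap of $\supp \psi(\cdot - n)$) to the scalar quadratic quantity $\sum_n c_n^2 |g_n|^2$ with $c_n = \|\psi(D-n)\phi\|_{H^s}$, and then hit that directly with a Chernoff/MGF bound, using that subgaussianity of $g_n$ gives $\mathbb{E}[e^{tX^2}] \leq C'$ uniformly for $t \in [0, t_0]$ (and hence $\log \mathbb{E}[e^{uX^2}] \lesssim u$ by Jensen/interpolation). Both are correct; the standard route avoids dealing with squares of subgaussians and is the one the probabilistic Strichartz estimates (Lemma~\ref{LEM:prob11}) are built on, so it is the natural choice for this literature, whereas your Chernoff argument is arguably more self-contained and gives the tail bound in a single shot without the $p$-optimization.

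One small imprecision worth flagging: the identity $\|\psi(D-n)\phi^\o\|_{H^s} = |g_n(\o)|\,\|\psi(D-n)\phi\|_{H^s}$ is not correct, because $\psi(D-n)$ is not a projection ($\psi$ is not idempotent and the multipliers overlap), so $\psi(D-n)\phi^\o$ also picks up contributions from $g_m$ with $|m-n| \leq 2$. This is not a real gap since you only need the one-sided bound $\|\phi^\o\|_{H^s}^2 \leq C_d \sum_n |g_n|^2 c_n^2$, which follows directly from Cauchy--Schwarz at each fixed frequency $\xi$ (using that at most $3^d$ of the $\psi(\xi - n)$ are nonzero) followed by integration in $\xi$; the two-sided equivalence $\sim$ you assert is both unnecessary and false pathwise (the cross terms in the quadratic form can be negative). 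I would phrase the reduction as a pointwise Cauchy--Schwarz rather than as an almost-orthogonality equivalence.
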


In fact, 
one can also 
show that there is almost surely  no smoothing upon randomization in terms of differentiability
(see, for example, Lemma B.1 in \cite{BT1}).
We, however, do not need 
such a non-smoothing result in the following.

Next, we state the probabilistic Strichartz estimates.
Before doing so,  we first recall the usual Strichartz estimates on $\R^d$ for readers' convenience.
We say that
a pair  $(q, r)$ is  admissible
if $2\leq q, r \leq \infty$, $(q, r, d) \ne (2, \infty, 2)$, and 
\begin{equation}
\frac{2}{q} + \frac{d}{r} = \frac{d}{2}.
\label{Admin}
\end{equation}

\noi
Then, the following Strichartz estimates
are known to hold.
See \cite{Strichartz, Yajima, GV, KeelTao}.

\begin{lemma} \label{LEM:Str}
Let $(q,r)$ be admissible.
Then, we have
\[
\| S(t) \phi \|_{L_t^q L_x^r} \lesssim \| \phi \|_{L^2}.
\]
\end{lemma}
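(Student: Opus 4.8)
The plan is to prove these homogeneous Strichartz estimates by the standard $TT^*$ argument built on the dispersive decay of the free propagator; see \cite{Strichartz, Yajima, GV, KeelTao}. First I would record the two elementary inputs for $S(t) = e^{it\Dl}$: the mass conservation identity $\|S(t)\phi\|_{L^2_x} = \|\phi\|_{L^2_x}$, which is immediate from Plancherel since the Fourier multiplier $e^{-it|\xi|^2}$ has modulus one, and the dispersive estimate $\|S(t)\phi\|_{L^\infty_x} \les |t|^{-\frac d2}\|\phi\|_{L^1_x}$, which follows by writing $S(t)\phi$ as convolution with the explicit Schr\"odinger kernel $(4\pi i t)^{-d/2}e^{i|x|^2/(4t)}$. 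Complex (Riesz--Thorin) interpolation between these two bounds yields, for every $2 \le r \le \infty$,
\[
\|S(t)\phi\|_{L^r_x} \les |t|^{-d(\frac12 - \frac1r)}\|\phi\|_{L^{r'}_x},
\]
where $r'$ denotes the H\"older conjugate of $r$.

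Next I would run the $TT^*$ method: writing $T\phi := S(\cdot)\phi$, the desired bound $\|T\phi\|_{L^q_t L^r_x} \les \|\phi\|_{L^2_x}$ is equivalent, by duality, to the boundedness of $TT^*F(t) = \int_\R S(t-s)F(s)\,ds$ from $L^{q'}_t L^{r'}_x$ to $L^q_t L^r_x$. Inserting the interpolated dispersive bound and applying Minkowski's inequality gives
\[
\|TT^*F(t)\|_{L^r_x} \les \int_\R |t-s|^{-d(\frac12 - \frac1r)}\|F(s)\|_{L^{r'}_x}\,ds.
\]
When $(q,r)$ is admissible with $q > 2$, admissibility \eqref{Admin} gives $d(\frac12-\frac1r) = \frac2q \in (0,1)$, so the one-dimensional Hardy--Littlewood--Sobolev inequality in the $t$-variable applies, the required exponent balance following again from \eqref{Admin}, and produces $\|TT^*F\|_{L^q_t L^r_x} \les \|F\|_{L^{q'}_t L^{r'}_x}$. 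The case $(q,r) = (\infty, 2)$ is trivial from mass conservation. This disposes of every admissible pair except the endpoint.

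The main obstacle is the endpoint pair $q = 2$ (which occurs only for $d \ge 3$, namely $(q,r) = (2, \tfrac{2d}{d-2})$): here the kernel exponent in the last display equals $1$, Hardy--Littlewood--Sobolev fails, and a more delicate argument is needed. I would follow Keel--Tao \cite{KeelTao}: decompose $TT^*$ dyadically according to $|t-s| \sim 2^j$, bound each dyadic piece by real interpolation between the $L^2_x \to L^2_x$ energy bound and the $L^1_x \to L^\infty_x$ dispersive bound at slightly off-diagonal exponents, and sum the resulting estimates over $j \in \Z$ via a bilinear real-interpolation (Whitney-type) argument that recovers the endpoint exponent. This endpoint step is the technical heart of the statement; for the non-endpoint admissible pairs actually needed in the present paper it is not required, so one may alternatively simply invoke \cite{KeelTao} for the endpoint.
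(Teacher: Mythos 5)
The paper does not prove this lemma; it is stated as a known result with citations to \cite{Strichartz, Yajima, GV, KeelTao}, and the standard proof is exactly the $TT^*$ argument you outline. Your proposal is correct and faithfully reproduces the argument found in those references: mass conservation plus the dispersive $L^1 \to L^\infty$ bound, interpolation to the full $L^{r'} \to L^r$ decay, the $TT^*$ reduction with Hardy--Littlewood--Sobolev for the non-endpoint pairs, and the Keel--Tao dyadic-decomposition/real-interpolation machinery for the endpoint $(2, \tfrac{2d}{d-2})$ when $d \ge 3$. Since this is precisely the content of the cited references, there is no divergence from the paper's (implicit) approach, and your observation that the endpoint is not actually needed for the pairs used in this paper is a sensible practical remark.
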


As a corollary, we obtain
\begin{equation}
\| S(t) \phi\|_{L^p_{t, x} }
\lesssim \big\||\nb|^{\frac d2 - \frac{d+2}{p}}\phi\big\|_{L^2}.
\label{Str2}
\end{equation}

\noi
for $p \geq \frac{2(d+2)}{d}$, 
which follows from 
Sobolev's inequality
and  Lemma \ref{LEM:Str}.

The following lemma shows an improvement of the Strichartz estimates
upon the randomization of initial data.
The improvement appears in the form of integrability
and not differentiability.
Note that such a gain of integrability 
is classical in the context of random Fourier series   \cite{PZ}.
The first estimate \eqref{PStr1} follows from 
Minkowski's integral inequality along with Bernstein's inequality.
As for the $L^\infty_T$-estimate \eqref{PStr2}, see \cite{OP}
for the proof (in the context of the wave equation).

\begin{lemma}\label{LEM:prob11}
Given $\phi$ on $\R^d$, 
let $\phi^\o$ be its Wiener randomization defined in \eqref{rand}.
Then,
given finite  $q, r \geq 2$,
there exist $C, c>0$ such that
\begin{align}
P\Big( \|S(t) \phi^\omega\|_{L^q_t L^r_x([0, T)\times \R^d)}> \ld\Big)
\leq C\exp \bigg(-c \frac{\ld^2}{ T^\frac{2}{q}\|\phi\|_{H^s}^{2}}\bigg)
\label{PStr1}
\end{align}
	
\noi
for all  $ T > 0$ and $\lambda>0$
with  \textup{(i)} $s = 0$ if $r < \infty$
and  \textup{(ii)}  $s > 0$ if $r = \infty$.
Moreover, when $q = \infty$, 
given $2 \leq r \leq\infty$, there exist $C, c > 0$ such that 
\begin{align}
P\Big( \|S(t) \phi^\omega\|_{L^\infty_t L^r_x([0, T)\times \R^d)}> \ld\Big)
\leq C(1+T) \exp \bigg(-c \frac{\ld^2}{ \|\phi\|_{H^s}^{2}}\bigg)
\label{PStr2}
\end{align}
	
\noi
for all   $\lambda>0$
with $s>0$.

\end{lemma}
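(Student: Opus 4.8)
The only probabilistic ingredient needed is the standard moment bound coming from the exponential moment hypothesis on $\{g_n\}$ (see \cite{BOP1, OP}): for every $p \geq 2$ and every $\{c_n\} \in \ell^2(\Z^d)$,
\[
\Big\| \sum_{n \in \Z^d} g_n c_n \Big\|_{L^p(\O)} \les \sqrt{p}\, \Big( \sum_{n} |c_n|^2 \Big)^{\frac12}.
\]
Granting this, the plan is to reduce both \eqref{PStr1} and \eqref{PStr2} to $L^p(\O)$-averaged estimates via Chebyshev's inequality, the Gaussian tails then arising by optimizing in $p$ precisely because of the $\sqrt{p}$ growth. Thus it suffices to show $\| S(t)\phi^\o \|_{L^p(\O) L^q_t L^r_x([0,T)\times\R^d)} \les \sqrt{p}\, T^{1/q} \|\phi\|_{H^s}$ for all $p \geq \max(q,r,2)$ when $q < \infty$, and a corresponding bound with $T^{1/q}$ replaced by suitable $T$-mild factors when $q = \infty$.

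For \eqref{PStr1} with $r < \infty$ (and $q < \infty$), I would first apply Minkowski's integral inequality twice: once to commute $\|\cdot\|_{L^p(\O)}$ inside $\|\cdot\|_{L^q_t L^r_x}$ (valid as $p \geq q, r$), and once to pull the $\ell^2_n$-sum out of $L^q_t L^r_x$ (valid as $q, r \geq 2$). Applying the moment bound pointwise in $(t,x)$ to $S(t)\phi^\o(x) = \sum_n g_n(\o) S(t)\psi(D-n)\phi(x)$, this leaves
\[
\big\| S(t)\phi^\o \big\|_{L^p(\O)L^q_t L^r_x}
\les \sqrt{p}\, \Big( \sum_{n} \| S(t)\psi(D-n)\phi \|_{L^q_t([0,T)) L^r_x}^2 \Big)^{\frac12}.
\]
For each fixed $n$, H\"older's inequality on the bounded interval $[0,T)$ produces a factor $T^{1/q}$ and reduces matters to $\|S(t)\psi(D-n)\phi\|_{L^\infty_t L^r_x}$; since $\psi(D-n)\phi$ has Fourier support in a cube of unit sidelength, unit-frequency Bernstein in $x$ upgrades $L^r_x$ to $L^2_x$, and unitarity of $S(t)$ on $L^2$ gives $\les \|\psi(D-n)\phi\|_{L^2}$. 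Summing, $\sum_n \|\psi(D-n)\phi\|_{L^2}^2 \les \|\phi\|_{L^2}^2$ by the bounded overlap of $\{\psi(\cdot-n)\}$. Note that no Strichartz estimate enters: the entire gain is from the randomization.

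For \eqref{PStr1} with $r = \infty$, I would precede this with a spatial Sobolev embedding $\|g\|_{L^\infty_x} \les \||\nb|^s g\|_{L^\rho_x}$ for $\rho = \rho(s,d)$ large enough that $s\rho > d$, reducing to the finite-$r$ case applied to $|\nb|^s\phi$; the derivative contributes $\langle n\rangle^s$ on each cube, and $\sum_n \langle n\rangle^{2s}\|\psi(D-n)\phi\|_{L^2}^2 \les \|\phi\|_{H^s}^2$, explaining the restriction $s > 0$. For \eqref{PStr2}, i.e.\ $q = \infty$, I would follow \cite{OP}: partition $[0,T)$ into $\lceil T\rceil$ intervals $I$ of length $\leq 1$, and on each $I$ invoke the one-dimensional fractional Sobolev embedding $W^{\eps,q_0}_t(I) \embeds L^\infty_t(I)$ — with constant uniform over $|I| \leq 1$ once $\eps q_0 > 1$ — to bound $\|S(t)\phi^\o\|_{L^\infty_t(I)L^r_x}$ by $\|S(t)\phi^\o\|_{L^{q_0}_t(I)L^r_x}$ plus a Slobodeckij seminorm, the latter controlled via $\|(S(\tau)-\Id)\psi(D-n)f\|_{L^r_x} \les \min(2,|\tau|\langle n\rangle^2)\,\|\psi(D-n)f\|_{L^r_x} \les |\tau|^\eps\langle n\rangle^{2\eps}\|\psi(D-n)f\|_{L^r_x}$, which replaces one factor by $\||\nb|^{2\eps}S(t)\phi^\o\|_{L^{q_0}_t(I)L^r_x}$. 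Choosing $\eps = \tfrac12\min(s,1)$ (so $2\eps \leq s$) and then $q_0 > 1/\eps$ finite, each interval is handled by the finite-exponent cases above with $|I| \leq 1$, so no power of $T$ enters the exponential; a union bound over the $\lceil T\rceil$ intervals yields the prefactor $C(1+T)$ with $\|\phi\|_{H^s}^2$ in the exponent. Again $s > 0$ is exactly what affords the $2\eps$ fractional time derivative.

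The main obstacle is this last step. Capturing the $L^\infty$-in-time norm with a full time derivative would demand $\phi \in H^2$, so one is forced into the fractional-in-time smoothing argument on unit subintervals; the delicate point is the bookkeeping there — the interplay of $\eps$, $q_0$, the Slobodeckij seminorm, and the unit-length localization that keeps the $T$-dependence out of the Gaussian exponent. Everything else is routine harmonic analysis (Minkowski, Bernstein, unitarity, bounded overlap) combined with Chebyshev's inequality.
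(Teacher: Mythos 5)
Your proposal is correct and reconstructs the same argument that the paper defers to \cite{BOP1, OP}: the paper's own one-line sketch for \eqref{PStr1} (``Minkowski's integral inequality along with Bernstein's inequality'') is exactly your Minkowski--moment-bound--Minkowski--Bernstein chain, and the reduction to $L^p(\O)$ moments plus optimization in $p$, the spatial Sobolev embedding for $r=\infty$, and the unit-interval partition with the fractional-in-time Sobolev embedding for $q=\infty$ are precisely what \cite{OP} does (in the NLW setting). The only cosmetic point is that the spatial Sobolev embedding should be stated with $\jb{\nb}^s$ rather than $|\nb|^s$, but since each block $\psi(D-n)\phi$ is localized to a unit cube where $\jb{\nb}^s$ is comparable to $\jb{n}^s$, this makes no difference to the argument.
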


\section{Function spaces and their basic properties} \label{SEC:space}

In this section, we go over the basic definitions
and properties of the functions spaces
used for the Fourier restriction norm method adapted
to the space  of  functions of bounded $p$-variation and its pre-dual,
introduced and developed by 
Tataru, Koch, and their collaborators \cite{KochT, HHK, HTT}.
We refer readers to  Hadac-Herr-Koch \cite{HHK}
and Herr-Tataru-Tzvetkov \cite{HTT} for proofs of the basic properties. See also \cite{BOP2}.

Let $\mathcal{Z}$ be the set of finite partitions $-\infty <t_{0}<t_{1}<\dots <t_{K} \le \infty $ of the real line.
By convention, we set $u(t_K):=0$  if $t_K=\infty$.

\begin{definition} \rm
Let $1\leq p<\infty $. 
We define a $U^p$-atom to be 
a step function $a: \R \to L^2(\R^d)$
of the form 
\begin{equation*}
a=\sum_{k=1}^{K}\phi _{k-1} \chi _{[t_{k-1},t_{k})}, 
\end{equation*}

\noi
where
  $\{t_{k}\}_{k=0}^{K}\in \mathcal{Z}$ and $\{\phi _{k}\}_{k=0}^{K-1}\subset L^2 (\R ^d)$ with 
$\sum_{k=0}^{K-1} \| \phi _{k} \|_{L^{2}}^{p}=1$.
Furthermore, we define the atomic space $U^p = U^p(\R; L^2(\R^d))$
by 
\begin{align*}
U^{p}:= \bigg\{ u: \R \rightarrow L^2 (\R ^d) : u=\sum_{j=1}^{\infty }\lambda _{j}a_{j}\ & \text{for $U^{p}$-atoms $a_{j}$},
\ \{\ld_j\}_{j \in \NB} \in \l^1(\NB; \C)\bigg\}
\end{align*}

\noi
with the norm 
\begin{align*}
\| u \| _{U^{p}}:= \inf \bigg\{ \sum_{j=1}^{\infty }|\lambda _{j}|:u=\sum_{j=1}^{\infty } \lambda _{j}a_{j}
 \text{ for $U^{p}$-atoms } a_{j},  \
 \{\ld_j\}_{j \in \NB} \in \l^1(\NB; \C)\bigg\},
\end{align*}

\noi
where the infimum is taken over all possible representations for $u$.
\end{definition}

\begin{definition}\rm
Let $1\le p<\infty $.

\smallskip

\noi
(i)  We define $V^{p}= V^p(\R; L^2(\R^d))$ to be the space of functions $u:\R\to L^2(\R^d)$ of bounded $p$-variation
with the standard $p$-variation norm
\begin{equation*}
\| u \| _{V^{p}}:=\sup_{\{t_{k}\}_{k=0}^{K}\in \mathcal{Z}}
\bigg( \sum_{k=1}^{K}\| u(t_{k})-u(t_{k-1})\| _{L^{2}}^{p}\bigg) ^{\frac 1p}.
\end{equation*}

\noi
By convention, we impose that  the limits $\lim _{t \to \pm \infty}u(t)$ exist in $L^2 (\R ^d)$.

\smallskip

\noi
(ii) Let $V_{\text{rc}}^{p}$ be the closed subspace of $V^{p}$ 
of all right-continuous functions $u \in V^p$ with  $\lim_{t\rightarrow -\infty }u(t)=0$.
\end{definition}

Recall the following inclusion relation; 
for $1\leq p<q<\infty $, 
\begin{align}
U^p \hookrightarrow V_{\text{rc}}^{p}\hookrightarrow U^{q} \hookrightarrow L^{\infty }(\R ;L^2 (\R ^d)).
\label{incl}
\end{align}

\noi
The space $V^p$ is the classical space of functions of bounded $p$-variation
and the space $U^p$ appears as the pre-dual of $V^{p'}$ with $\frac 1p + \frac 1{p'} = 1$.
Their duality relation and the atomic structure of the $U^p$-space
turned out to be very effective in studying dispersive PDEs in critical settings.

Next, we define the $U^p$- and $V^p$-spaces
adapted to the  Schr\"odinger flow.

\begin{definition} \rm
 Let $1\leq p<\infty $.
We define  $U_{\Delta}^{p}:=S (t) U^{p}$ and ($V_{\Delta}^{p}:=S(t)V^{p}$, respectively)
to be the space of all functions $u:\R \to L^2 (\R ^d)$ such that 
$t \to S ( -t ) u(t)$ is in $U^p$ (and in $V^p$, respectively)
with the norms 
\begin{equation*}
\| u \|_{U_{\Delta}^{p}} :=\| S(-t )u \|_{U^{p}}
\qquad \text{and} \qquad
\| u \|_{V_{\Delta}^p} :=\| S(-t)u \|_{V^{p}}.
\end{equation*}

\noi
 The closed subspace $V_{\text{rc}, \Delta}^{p}$ is defined in an analogous manner.
\end{definition}

Next, we define the dyadically defined versions of $U^p_\Dl$ and $V^p_\Dl$.
We use the convention that capital letters denote dyadic numbers, e.g., $N=2^{n}$ for $n\in \mathbb{N}_{0}:=\mathbb{N}\cup \{ 0\}$.
Fix a nonnegative even function 
$\varphi \in C_{0}^{\infty }((-2,2); [0, 1])$ with $\varphi (r)=1$ for $|r|\leq 1$.
Then, we set $\varphi _{N}(r):=\varphi (r/N) - \varphi (2r /N) $ for $N\geq 2$ and $\varphi _{1}(r):=\varphi (r)$.
Given $N\in 2^{\mathbb{N}_{0}}$, 
let $\P_{N}$ denote the Littlewood-Paley projection operator 
with the Fourier multiplier $\varphi _{N}(|\xi |)$, 
i.e.~$\P_{N}f:=\mathcal{F} ^{-1}[\varphi _{N}(|\xi |)\ft{f}(\xi)]$. 
We also define $\P_{\leq N}:=\sum_{1\leq M\leq N}\P_{M}$ and $\P_{> N}:=\Id-\P_{\leq N}$.

\begin{definition} \label{def:fs} \rm
Let  $s\in \mathbb{R}$.
We define $X^s$ and $Y^s$ as the closures of 
 $C(\R ;H^s(\R ^d)) \cap U_{\Delta}^2 $ 
and  $C(\R ;H^s(\R ^d))\cap V_{\text{rc},\Delta}^2$
 with respect to the norms
\begin{equation*}
\| u \|_{X^s} := \bigg( \sum_{N \in 2^{\NB _0}}N^{2s} \| \P_{N}u \|_{U_{\Delta}^2}^{2}\bigg) ^{\frac 12}
\qquad \text{and}
\qquad 
\| u \|_{Y^s} := \bigg( \sum_{N \in 2^{\NB _0}}N^{2s} \| \P_{N}u \|_{V_{\Delta}^2}^{2}\bigg) ^{\frac 12}, 
\end{equation*}

\noi
respectively.
\end{definition}

The transference principle (\cite[Proposition 2.19]{HHK})
and the interpolation lemma 
\cite[Proposition 2.20]{HHK}
applied on the Strichartz estimates (Lemma \ref{LEM:Str} and \eqref{Str2})
imply the following estimate for the $Y^0$-space.

\begin{lemma} \label{LEM:Stv}
Let $d \ge 1$.  Then, given any  admissible pair  $(q,r)$  with $q>2$
and $p \geq \frac{2(d+2)}{d}$, 
 we have
\begin{align*}
\| u \|_{L_t^q L_x^r} & \lesssim \| u \|_{Y^0}, \\
\|  u \|_{L^p_{t,x}}
&  \les  \big\||\nb|^{\frac d2 - \frac {d+2}p} u\big\|_{Y^0}.
\end{align*}
\end{lemma}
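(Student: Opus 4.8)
The plan is to deduce Lemma \ref{LEM:Stv} directly from the linear Strichartz estimates of Lemma \ref{LEM:Str} and its corollary \eqref{Str2} by invoking the abstract machinery for the $U^p_\Delta$/$V^p_\Delta$ spaces developed in \cite{HHK}. The key observation is that both inequalities we want to prove are of the form ``$\|u\|_Z \lesssim \|u\|_{Y^0}$'' where $Z$ is a space-time Lebesgue-type norm for which the corresponding estimate is known with $u = S(t)\phi$ and $\|u\|_{Y^0}$ replaced by $\|\phi\|_{L^2}$.

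First I would recall the transference principle \cite[Proposition 2.19]{HHK}: if a space-time estimate $\|S(t)\phi\|_Z \lesssim \|\phi\|_{L^2}$ holds for the linear Schr\"odinger propagator, then for every $u \in U^p_\Delta$ one has $\|u\|_Z \lesssim \|u\|_{U^p_\Delta}$, provided $Z = L^q_t L^r_x$ (or $L^p_{t,x}$) with $q, p < \infty$ — this is precisely the hypothesis $q > 2$ (hence $q$ finite via admissibility, unless $(q,r)=(\infty,2)$ which is excluded) in the statement, and $p \geq \frac{2(d+2)}{d} \geq 2$ is finite. Applying this to Lemma \ref{LEM:Str} gives $\|u\|_{L^q_t L^r_x} \lesssim \|u\|_{U^2_\Delta}$ for admissible $(q,r)$ with $q > 2$, and applying it to \eqref{Str2} (after commuting the Fourier multiplier $|\nb|^{\frac d2 - \frac{d+2}{p}}$ with $S(t)$ and with the $U^2_\Delta$ structure, which is legitimate since Fourier multipliers act on the $L^2$-fibers) gives $\||\nb|^{-(\frac d2 - \frac{d+2}p)} u\|_{L^p_{t,x}} \lesssim \|u\|_{U^2_\Delta}$, i.e. $\|u\|_{L^p_{t,x}} \lesssim \||\nb|^{\frac d2 - \frac{d+2}p} u\|_{U^2_\Delta}$.

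The remaining step is to upgrade $U^2_\Delta$ to $V^2_\Delta$, and then to $Y^0$. For the first, I would use the interpolation lemma \cite[Proposition 2.20]{HHK}: since the estimate $\|u\|_Z \lesssim \|u\|_{U^{q_0}_\Delta}$ holds for $q_0 = 2$ and, trivially from the inclusion $U^\infty_\Delta \hookrightarrow L^\infty_t L^2_x$ together with H\"older in time on a bounded interval (or directly from $U^{q_1}_\Delta \hookrightarrow L^\infty_t L^2_x$ and interpolation against a cruder estimate at a larger exponent $q_1$), one obtains a logarithmically-lossless bound that Proposition 2.20 converts into $\|u\|_Z \lesssim \|u\|_{V^2_\Delta}$; this uses crucially that $q > 2$ and $p > \frac{2(d+2)}{d}$ strictly — the endpoint $q = 2$ would genuinely fail this interpolation, which is why the hypothesis excludes it. (For the $L^p_{t,x}$ estimate one applies the same reasoning to the multiplier-weighted function.) Finally, passing from $V^2_\Delta$ to $Y^0$ is immediate: by definition of the $Y^0$-norm and the Littlewood--Paley square-function characterization, $\|u\|^2_{Y^0} = \sum_N \|\P_N u\|^2_{V^2_\Delta}$, and since $Z$ is a Lebesgue-type norm one controls $\|u\|_Z$ by $\big(\sum_N \|\P_N u\|_Z^2\big)^{1/2}$ via the Littlewood--Paley inequality (valid as $2 \le r < \infty$ or $2 \le p < \infty$), then applies the $V^2_\Delta$-estimate frequency-by-frequency and sums. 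For the second inequality, one inserts $|\nb|^{\frac d2 - \frac{d+2}{p}}$, notes it is comparable to $N^{\frac d2 - \frac{d+2}{p}}$ on $\P_N$, and the $N$-powers are absorbed into the definition of $Y^0$ after accounting for the derivative loss — more precisely one proves $\|\P_N u\|_{L^p_{t,x}} \lesssim N^{\frac d2 - \frac{d+2}{p}} \|\P_N u\|_{V^2_\Delta}$ and sums the squares.

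The main obstacle I anticipate is not any single hard estimate but rather the bookkeeping in the interpolation step: one must set up the pair of estimates at exponents $q_0 = 2$ and some $q_1$ with the correct logarithmic dependence on the number of dyadic time-blocks so that \cite[Proposition 2.20]{HHK} applies, and one must verify that the strict inequalities $q > 2$, $p > \frac{2(d+2)}{d}$ are exactly what make the interpolation exponents admissible. A secondary, more routine point is justifying that Fourier multipliers in the spatial variable commute with the $U^p_\Delta$/$V^p_\Delta$ constructions and with the Littlewood--Paley decomposition used to define $Y^0$; this is standard but should be stated. Since all the genuinely analytic content is contained in Lemma \ref{LEM:Str}, \eqref{Str2}, and the cited Propositions 2.19--2.20 of \cite{HHK}, the proof is essentially a transference-plus-interpolation argument and can be written quite compactly.
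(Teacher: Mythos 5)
Your route---transference of the linear Strichartz estimates to a $U^2_\Delta$-bound, then passing to $V^2_\Delta$, then a Littlewood--Paley summation to land in $Y^0$---is precisely what the paper invokes (the transference principle \cite[Proposition 2.19]{HHK} and the interpolation lemma \cite[Proposition 2.20]{HHK} applied to Lemma~\ref{LEM:Str} and \eqref{Str2}); the paper gives no further detail, so this is the same proof. Two small corrections to your reasoning: the argument does not require $p > \frac{2(d+2)}{d}$ strictly, since at the endpoint $p = \frac{2(d+2)}{d}$ the associated time exponent is $q = p = \frac{2(d+2)}{d} > 2$ for every $d \ge 1$, so the strict inequality $q>2$ already does all the work; and ``H\"older in time on a bounded interval'' is not available here (the estimates are on all of $\R$) --- the cleanest way to upgrade to $V^2_\Delta$ is simply to note that transference gives $\|u\|_{L^q_t L^r_x} \lesssim \|u\|_{U^q_\Delta}$ and then use the embedding $V^2_\Delta \hookrightarrow U^q_\Delta$ for $q>2$, which makes the interpolation lemma optional rather than essential for this particular statement.
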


Similarly, the bilinear refinement of the Strichartz estimate \cite{Bou98, CKSTT}
implies the following bilinear estimate.

\begin{lemma} \label{LEM:biStv}
Let $N_1, N_2 \in 2^{\NB _0}$ with $N_1 \le N_2$.
Then, given any  $\eps >0$, we have 
\[
\| \P_{N_1}u_1 \P_{N_2}u_2\|_{L^2_{t,x}} 
\les N_1^{\frac{d-2}{2}} \bigg( \frac{N_1}{N_2} \bigg) ^{\frac{1}{2}-\eps} \| \P_{N_1} u_1 \|_{Y^0} \| \P_{N_2} u_2 \|_{Y^0} 
\]

\noi
for all $u_1, u_2 \in Y^0$.

\end{lemma}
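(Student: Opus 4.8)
The plan is to deduce the bilinear $V^2_\Delta$-estimate from the corresponding bilinear refinement of the Strichartz estimate at the level of free Schr\"odinger solutions, via the transference principle. Recall the classical bilinear refinement (Bourgain \cite{Bou98}, and Colliander--Keel--Staffilani--Takaoka--Tao \cite{CKSTT}): for $N_1 \le N_2$,
\[
\big\| (S(t)\P_{N_1}f_1)(S(t)\P_{N_2}f_2) \big\|_{L^2_{t,x}(\R\times\R^d)}
\les \Big(\frac{N_1}{N_2}\Big)^{\frac12} N_1^{\frac{d-2}{2}} \|\P_{N_1}f_1\|_{L^2}\|\P_{N_2}f_2\|_{L^2}.
\]
(The power $N_1^{\frac{d-2}{2}}$ simply records the $L^2\to L^2_{t,x}$ scaling of the one-dimensional-in-time gain; one can read it off by dimensional analysis from the $d=2$ statement in \cite{Bou98}.) First I would restate this as a statement about $U^2_\Delta$: since a $U^2_\Delta$-atom is a finite superposition $\sum_k \chi_{[t_{k-1},t_k)}(t) S(t)\phi_{k-1}$ with $\sum_k \|\phi_{k-1}\|_{L^2}^2 = 1$, and the frequency projections $\P_{N_1},\P_{N_2}$ commute with $S(t)$ and with multiplication by time-cutoffs, bilinearity together with the disjointness of the time intervals and Cauchy--Schwarz in $k$ upgrades the free-solution bound to
\[
\|\P_{N_1}u_1\,\P_{N_2}u_2\|_{L^2_{t,x}} \les \Big(\frac{N_1}{N_2}\Big)^{\frac12} N_1^{\frac{d-2}{2}} \|\P_{N_1}u_1\|_{U^2_\Delta}\|\P_{N_2}u_2\|_{U^2_\Delta}
\]
for $U^2_\Delta$-atoms, hence for all $u_1,u_2\in U^2_\Delta$ by the atomic decomposition and the triangle inequality. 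This is exactly the transference principle of \cite[Proposition 2.19]{HHK} applied to the bilinear form $(u_1,u_2)\mapsto \P_{N_1}u_1\,\P_{N_2}u_2$.

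Next I would pass from $U^2_\Delta$ to $V^2_\Delta$ (equivalently $Y^0$ after summing in frequency — but since each summand only involves a single Littlewood--Paley piece, it suffices to argue at the level of a fixed dyadic block). The inclusion $V^2_{\mathrm{rc},\Delta}\hookrightarrow U^p_\Delta$ for any $p>2$ from \eqref{incl} costs an implicit constant depending on $p$, but it does not preserve the sharp exponent $\tfrac12$; to recover the loss $\tfrac12-\eps$ one interpolates. Concretely, fix $\eps>0$ and choose $p=2+\delta$ with $\delta=\delta(\eps)$ small. The bilinear estimate on $U^2_\Delta$ has gain $(N_1/N_2)^{1/2}$, while the trivial bilinear estimate (H\"older, two copies of the Strichartz bound $\|\P_N u\|_{L^4_{t,x}}$ or $L^{2(d+2)/d}_{t,x}$, crudely) on $U^{q_0}_\Delta$ for some large fixed $q_0$ has gain $(N_1/N_2)^{0}$ but with a worse power of $N_1$ that is still polynomially controlled. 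Applying the interpolation lemma \cite[Proposition 2.20]{HHK} to the bilinear operator, interpolating between the sharp $U^2_\Delta$-bound and the crude $U^{q_0}_\Delta$-bound, yields the bound on $V^2_{\mathrm{rc},\Delta}$ with gain $(N_1/N_2)^{\frac12-\eps}$; the residual power of $N_1$ after interpolation is $N_1^{\frac{d-2}{2}}$ up to an $N_1^{C\eps}$ which can be absorbed by relabeling $\eps$ (or, more cleanly, by noting the interpolation is between two estimates sharing the same $N_1$-scaling, so no power of $N_1$ is lost at all and only the $N_1/N_2$-exponent degrades). Finally, since $Y^0$ is built from $V^2_{\mathrm{rc},\Delta}$ and $\|\P_N u\|_{Y^0} \sim \|\P_N u\|_{V^2_\Delta}$ for a single block, we obtain the claimed inequality for all $u_1,u_2\in Y^0$.

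The main obstacle is the interpolation step: getting the \emph{sharp} gain $\tfrac12-\eps$ on $V^2$ rather than the lossy $\tfrac12\cdot\theta$ one would naively get from $V^2\hookrightarrow U^{2+\delta}$. The point is that \cite[Proposition 2.20]{HHK} interpolates a bilinear estimate \emph{between function spaces} (from $U^2_\Delta$-bounds and, say, $U^{q_0}_\Delta$- or even $L^\infty_t L^2_x$-type bounds) with only a logarithmic loss that is then eaten by the $\eps$, so the correct framing is to interpolate the \emph{estimate}, not to embed the space and then estimate. One must be a little careful that the crude endpoint bound used for interpolation genuinely holds on the larger space $U^{q_0}_\Delta$ (it does — any fixed Strichartz pair transfers to $U^{q}_\Delta$ for the appropriate $q$) and that its $N_1$-dependence, when combined with the sharp endpoint, produces at worst $N_1^{\frac{d-2}{2}+C\eps}$, which is harmless. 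Modulo this standard bookkeeping, the estimate follows; no genuinely new input beyond the classical bilinear Strichartz refinement and the $U^p$--$V^p$ machinery of \cite{HHK} is needed, which is why the excerpt simply cites \cite{Bou98, CKSTT} and defers to \cite{HHK, HTT, BOP2}.
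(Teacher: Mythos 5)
Your proof reconstructs exactly the argument the paper is implicitly citing: transfer the free-solution bilinear Strichartz estimate to $U^2_\Delta$ via the atomic structure, then pass to $V^2_\Delta$ (hence $Y^0$) using the logarithmic interpolation lemma \cite[Proposition 2.20]{HHK} against a crude $U^{q_0}_\Delta$ endpoint, which is precisely how the paper treats Lemma \ref{LEM:Stv} and, ``similarly,'' Lemma \ref{LEM:biStv}. One minor slip: the parenthetical claim that the two endpoint estimates ``share the same $N_1$-scaling'' is not literally correct (the crude $L^4_{t,x}\times L^4_{t,x}$ bound carries $N_1^{(d-2)/4}N_2^{(d-2)/4}$), but this is harmless because the HHK interpolation lemma returns the \emph{sharper} constant multiplied only by a logarithm of the ratio of constants, which is $\lesssim_\eps (N_2/N_1)^\eps$ and is absorbed into the $\eps$-loss in the exponent.
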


For our analysis, we need to introduce
the local-in-time versions of the spaces defined above.

\begin{definition} \label{def:time_tr}\rm
Let $\mathcal{B}$ be a Banach space consisting of   continuous $H$-valued functions
(in $t \in \R)$
 for some Hilbert space $H$.
We define the corresponding restriction space $\mathcal{B}(I)$ to 
a given time interval $I \subset \R$ as
\begin{equation*}
\mathcal{B}(I):=\big\{ u \in C(I;H): \text{ there exists } v \in \mathcal{B}
\text{ such that } \ v|_{I} = u\big\}.
\end{equation*}

\noi
We 
endow $\mathcal{B}(I)$ with the norm 
\[\|u \|_{\mathcal{B}(I)} :=\inf \big\{ \| v \|_{\mathcal{B}}:
\ v|_{I} = u\big\},\]

\noi
where the infimum is taken over all  possible extensions  $v$ of $u$ onto the real line. 
When $I = [0, T)$, we simply set 
$\mathcal{B}_T : = \mathcal{B}(I) = \mathcal{B}([0, T))$.
\end{definition}

Recall that the space $\mathcal{B}(I)$ is a Banach space.
As a consequence of \eqref{incl}, we have the following inclusion relation;
for any interval $I \subset \R$, 
we have 
\[
X^s(I) \hookrightarrow Y^s(I) \hookrightarrow 
\jb{\nb}^{-s} V^2_{\Delta}(I) \cap C(I;H^s(\R^d)).
\]

We conclude this section by stating the linear estimates.
Given $a \in \R$, we define the integral operator $\I_{a}$
on $ L_{\text{loc}}^{1}( [a,\infty );L^2 (\R ^d) )$ by 
\begin{equation}
\I_{a} [F] (t) :=\int_{a}^{t} S ( t-t') F (t') dt' 
\label{duhamel}
\end{equation}

\noi
for $t\ge a$ and $\I_{a} [F] (t) =0$ otherwise.
When $a = 0$, we simply set $\I = \I_a$.
Given an interval $I = [a, b)$,  
we set the dual norm $N^s(I)$ controlling the nonhomogeneous term on $I$
by 
\[ \| F\|_{N^s(I)} = \big\|\I_a[F]\big\|_{X^s(I)},\]

\noi
we have the following linear estimates.

\begin{lemma} \label{LEM:lin}
Let $s \in \R$ and $T\in ( 0,\infty ]$.
Then,  the following linear estimates hold: 
\begin{align*}
\|   S(t) \phi \|_{X^s_T}
& \leq \|\phi\|_{H^s}, \\
\|  F \|_{N_{T}^{s}}
& \le \sup_{\substack{w \in Y^{-s}_T\\ \| w \|_{Y^{-s}_T=1}}} \left| \int_{0}^{T} \lr{F(t), w(t)}_{L_{x}^{2}} dt \right|
\end{align*}

\noi
 for any $\phi \in H^s(\R^d)$
 and $F \in L^1([0,T);H^{s}(\R ^d))$.
\end{lemma}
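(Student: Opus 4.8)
The plan is to deduce both inequalities from the atomic/duality structure of $U^p_\Delta$ and $V^p_\Delta$ recorded in \cite{HHK, HTT}; these are by-now-standard facts for the Fourier restriction spaces based on functions of bounded $p$-variation, so I will only indicate how to assemble them and where the one genuine subtlety lies.

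For the homogeneous estimate, fix $\phi \in H^s(\R^d)$ and take as extension of $S(\cdot)\phi\big|_{[0,T)}$ to the whole line the function $v(t) := \ind_{[0,\infty)}(t)\, S(t)\phi$ (a routine continuity correction near $t=0$ to land in the closure $C(\R;H^s)\cap U^2_\Delta$ is harmless). For each dyadic $N$ one has $S(-t)\P_N v(t) = \ind_{[0,\infty)}(t)\,\P_N\phi$, which is exactly $\|\P_N\phi\|_{L^2}$ times a single $U^2$-atom (partition $\{0,\infty\}$, datum $\P_N\phi/\|\P_N\phi\|_{L^2}$); hence $\|\P_N v\|_{U^2_\Delta}=\|\P_N\phi\|_{L^2}$. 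Summing the dyadic pieces against the weights $N^{2s}$ gives $\|v\|_{X^s}=\big(\sum_N N^{2s}\|\P_N\phi\|_{L^2}^2\big)^{1/2}\le\|\phi\|_{H^s}$ (with the Littlewood--Paley convention for the $H^s$-norm), and taking the infimum over extensions yields $\|S(t)\phi\|_{X^s_T}\le\|\phi\|_{H^s}$.

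For the inhomogeneous estimate, recall $\|F\|_{N^s_T}=\|\I_0[F]\|_{X^s_T}$ and reduce to the whole line by replacing $F$ with $\ind_{[0,T)}F$: the function $\I_0[\ind_{[0,T)}F]$ agrees with $\I_0[F]$ on $[0,T)$ and, after applying $S(-t)$, is constant for $t\ge T$, so it is an admissible extension. Since $\P_N\I_0[F]=\I_0[\P_N F]$, the $U^p$--$V^{p'}$ duality for the Duhamel operator (\cite{HHK}, with $p=2$) gives, for each $N$,
\[
\|\I_0[\P_N F]\|_{U^2_\Delta}\le\sup_{\|v\|_{V^2_\Delta}\le1}\bigg|\int_0^T\lr{\P_N F(t),v(t)}_{L^2_x}\,dt\bigg|
=\sup_{\|v\|_{V^2_\Delta}\le1}\bigg|\int_0^T\lr{F(t),\P_N v(t)}_{L^2_x}\,dt\bigg|.
\]
It then remains to dualize the weighted $\ell^2$-sum over $N$: one picks $(a_N)$ with $\sum_N a_N^2=1$ realizing the Cauchy--Schwarz equality in $\|\I_0[F]\|_{X^s_T}=\big(\sum_N N^{2s}\|\I_0[\P_N F]\|_{U^2_\Delta([0,T))}^2\big)^{1/2}$, chooses near-optimal test functions $v_N$ in the display above, and sets $w:=\sum_N N^{s}a_N\,\P_N v_N$ (with phases chosen so the pairings add constructively); then $\int_0^T\lr{F,w}_{L^2_x}\,dt$ is comparable to $\|\I_0[F]\|_{X^s_T}$, while the boundedness and near-orthogonality of the Littlewood--Paley projections on $V^2_\Delta$ give $\|w\|_{Y^{-s}_T}^2=\sum_M M^{-2s}\|\P_M w\|_{V^2_\Delta}^2\les\sum_M a_M^2=1$. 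Letting the near-optimality parameter tend to $0$ finishes the proof.

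I expect the last step --- passing from the blockwise $U^2$--$V^2$ duality to the global duality between $X^s_T$ and $Y^{-s}_T$ --- to be the only delicate point: it needs the boundedness of each $\P_M$ on $V^2_\Delta$ (hence on $Y^{-s}_T$) and the fact that $\P_M\P_N\ne0$ only when $|\log_2(M/N)|\le1$, together with mild care in the restriction-space reduction. Everything else is bookkeeping, and full details can simply be quoted from \cite{HHK, HTT, BOP2}.
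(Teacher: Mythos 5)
Correct, and it reconstructs exactly the route the paper leaves implicit (the paper merely calls the first estimate immediate and points to Proposition~2.10 / Remark~2.11 of \cite{HHK} and Proposition~2.11 of \cite{HTT} for the second): a single-atom bound for the homogeneous part, and frequency-blockwise $U^2$--$V^2$ duality followed by $\ell^2$-dualization for the inhomogeneous part. One point worth making precise in the ``routine continuity correction'': the jump $\ind_{[0,\infty)}(t)\,\P_N\phi$ cannot be approximated in $U^2_\Delta$ by continuous functions, since the $U^2$-norm dominates $L^\infty_t L^2_x$ and the discontinuity at $t=0$ is genuine, so one should instead take as extension a continuous nondecreasing ramp $\chi$ with $\chi\equiv 1$ on $[0,\infty)$; being an average of $U^2$-atoms, $\chi\,\P_N\phi$ still satisfies $\|\chi\,\P_N\phi\|_{U^2}\le\|\P_N\phi\|_{L^2}$ (only the exact equality you wrote becomes an inequality), and the rest of your argument --- including the $\lesssim$ rather than literal $\le$ that the overlapping smooth Littlewood--Paley blocks on $\R^d$ inevitably produce, a small imprecision already present in the lemma's own wording and in the transfer of the constant-$1$ result of \cite{HTT} from the orthogonal Fourier modes of $\T^3$ --- goes through.
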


The first estimate is immediate from the definition of the space $X^s_T$.
The second estimate basically 
follows from the duality relation between $U^2$ and $V^2$
(\cite[Proposition 2.10, Remark 2.11]{HHK}).
See also Proposition 2.11 in \cite{HTT}.

\section{Nonlinear estimates} \label{SEC:nonlin}

As in Section \ref{SEC:1}, 
let $z(t) = z^{\omega} (t) = S(t) \phi ^{\omega}$ denote the linear solution with the randomized initial data $\phi^\o$
in \eqref{rand}.
If $u$ is a solution to \eqref{NLS}, then the residual term 
 $v = u - z$ satisfies  the perturbed NLS \eqref{NLS1a}.
In this section, we establish relevant nonlinear estimates
in solving the fixed point problem \eqref{NLS1b} for the residual term $v$.

Given $d = 5, 6$, fix an admissible pair: 
\begin{align}
(q_d,r_d) := \bigg( \frac{2d}{d-2}, \frac{2d^2}{d^2-2d+4} \bigg)
= \begin{cases}
\rule[-3mm]{0mm}{0mm}
\big(\frac{10}{3}, \frac{50}{19}\big), & d = 5,\\
\big(3, \frac{18}{7}\big), & d = 6.
\end{cases}
\label{qr1}
\end{align}

\noi
Note that 
\begin{align*}
\frac{d+2}{d-2} q_d' = q_d, 
\end{align*}

\noi
where $q_d' $ denotes the H\"older conjugate of $q_d$.
By Sobolev's inequality, we have 
\begin{align}
W^{1,r_d}(\R^d) \hookrightarrow L^{\rho_d}(\R^d), \qquad \rho_d := \frac{2d^2}{(d-2)^2}
= \begin{cases}
\rule[-3mm]{0mm}{0mm}
 \frac{50}{9}, & d = 5,\\
\frac{9}{2}, & d = 6.
\end{cases}
\label{N2}
\end{align}

\noi
Before we state the main probabilistic nonlinear estimates, 
let us define the set of indices:
\[\mathfrak{S}_{\delta} := \bigg\{ \Big(\frac{q_d}{1-\delta q_d}, r_d\Big), 
\Big(\frac{q_d}{1-\delta q_d}, \, \frac{d+2}{d-2}r_d'\Big), \,  \Big(\frac{q_d}{1-\delta q_d}, \rho_d\Big),
\, \Big(\frac{4}{1 - 4\dl}, 4\Big)  , 
\, \Big(4, \frac{4+2\dl}{\dl}\Big)  \bigg\}
\]

\noi
for small $\dl > 0$.
Given  an interval $I \subset \R$ and $\delta >0$, we define
$S^s(I) = S^s(I; \dl)$ by\footnote{As we see below, we fix $\dl = \dl(d, s) > 0$
and hence we suppress the dependence on $\dl$ for simplicity of the presentation.
A similar comment applies to $E_M(I)$ and $\wt E_M(I)$ defined in \eqref{ER}
and \eqref{ER2}.} 
\begin{equation} \label{S0}
\| u \|_{S^s (I)} := \max \big\{ \| \jb{\nb}^s u \|_{L_t^q L_x^r (I \times \R ^d)} : (q,r) \in \mathfrak{S}_{\delta} \big\} .
\end{equation}

\noi
Furthermore, given $M>0$ and an interval $I$,  define the set $E_M(I) \subset \O$ by 
\begin{align}
E_{M} (I) := \big\{ \omega \in \Omega : \| \phi ^{\omega} \|_{H^s} 
+ \|  S(t) \phi ^{\omega} \|_{S^s (I)} \le M \big\} .
\label{ER}
\end{align}

\noi
When $I = [0, T)$, we simply write $E_{M, T} = E_M([0, T))$.

\begin{proposition} \label{PROP:nonlin1}
Let  $d=5,6$,   $1-\frac{1}{d}<s<1$, and 
\[ \N(u) = |u|^\frac{4}{d-2} u \qquad \text{or}\qquad 
\N(u) = |u|^\frac{d+2}{d-2}.
\]

\noi
Given $\phi \in H^s(\R ^d)$, let $\phi ^{\omega}$ be its Wiener randomization defined in \eqref{rand}
and $z = S(t) \phi^\o$.
Then, there exist  sufficiently small $\delta = \dl (d, s)>0$ 
and $\theta = \theta(d, s)> 0$ such that 
\begin{align}
 \| \mathcal{N} (v+z)\|_{N^1_T}
& \le C_1 \Big\{ \| v \|_{Y^1_T}^{\frac{d+2}{d-2}} + T^{\theta} M^{\frac{d+2}{d-2}} \Big\} , 
\label{nonlin1} \\
\|\mathcal{N} (v_1+z) - \mathcal{N}  & (v_2+z)\|_{N^1_T}   \notag\\
& \le C_2 \Big\{ \| v_1 \|_{Y^1_T}^{\frac{4}{d-2}} + \| v_2 \|_{Y^1_T}^{\frac{4}{d-2}} 
+ T^{\theta} M^{\frac{4}{d-2}} \Big\} \| v_1-v_2 \|_{Y^1_T}, 
 \label{nonlin2}
\end{align}

\noi
for any $T>0$, 
  $v, v_1, v_2 \in Y^1_T$, and $\omega \in E_{M, T}$.

\end{proposition}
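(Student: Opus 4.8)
The plan is to prove the two estimates \eqref{nonlin1} and \eqref{nonlin2} by first reducing everything to a space-time integral estimate via the dual characterization of the $N^1_T$-norm in Lemma \ref{LEM:lin}, namely $\|F\|_{N^1_T} \lesssim \sup_{\|w\|_{Y^{-1}_T}=1} |\int_0^T \langle F(t), w(t)\rangle_{L^2_x}\, dt|$, and then distributing a derivative $\langle \nabla\rangle$ across the nonlinearity. The key structural fact is that $\N(u) = |u|^{4/(d-2)} u$ (or $|u|^{(d+2)/(d-2)}$) has the homogeneity of a polynomial of degree $\frac{d+2}{d-2}$, so that $|\N(v+z)| \lesssim |v|^{\frac{d+2}{d-2}} + |z|^{\frac{d+2}{d-2}}$ and, for the derivative, $|\langle\nabla\rangle \N(v+z)| \lesssim (|v|^{4/(d-2)}+|z|^{4/(d-2)})(|\langle\nabla\rangle v| + |\langle\nabla\rangle z|)$ up to the usual care with fractional Leibniz/chain rule for a non-smooth nonlinearity --- this last point (fractional differentiation of $|u|^{4/(d-2)}u$ when $\frac{4}{d-2}<1$, i.e. $d=6$, or $=1$, i.e. $d=5$) is the reason the whole paper restricts to $d=5,6$ and is where one must be careful to avoid genuine fractional derivatives of $z$. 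First I would dispose of the purely random term $\|\langle\nabla\rangle(|z|^{4/(d-2)}z)\|_{N^1_T}$: put it in $L^{q_d'}_t W^{1,r_d'}_x$, use the product structure to bound it by $\|z\|_{S^s(I)}^{(d+2)/(d-2)}$ modulo a power of $T$ gained from Hölder in time (this is exactly where the $T^\theta$ and the $M^{(d+2)/(d-2)}$ on the right-hand side come from, and where $\mathfrak{S}_\delta$ and the definition \eqref{ER} of $E_{M,T}$ are used).

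Next I would handle the mixed and pure-$v$ terms. The admissible pair $(q_d,r_d)=(\frac{2d}{d-2},\frac{2d^2}{d^2-2d+4})$ is chosen precisely so that $\frac{d+2}{d-2}q_d' = q_d$, hence a term of the form $|v|^{\frac{d+2}{d-2}}$ (with no derivative) lands in $L^{q_d'}_t L^{r_d'}_x \hookrightarrow N^1_T$ once one controls $\|\langle\nabla\rangle v\|_{L^{q_d}_t L^{r_d}_x}$ together with the other exponents in $\mathfrak S_\delta$; by Lemma \ref{LEM:Stv} and the embedding $X^1_T\hookrightarrow Y^1_T$ one bounds $\|\langle\nabla\rangle v\|_{L^{q_d}_t W^{1,r_d}_x}$ and the Sobolev embedding \eqref{N2}, $W^{1,r_d}\hookrightarrow L^{\rho_d}$, by $\|v\|_{Y^1_T}$. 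The mixed terms such as $|v|^{4/(d-2)}\langle\nabla\rangle z$, $|z|^{4/(d-2)}\langle\nabla\rangle v$, etc., are estimated by Hölder in space-time, placing the $v$-factors in the $Y^1_T$-controlled norms coming from $\mathfrak S_\delta$ and the $z$-factors in the probabilistically controlled Strichartz norms $S^s(I)$ --- the extra indices $(\frac{4}{1-4\delta},4)$ and $(4,\frac{4+2\delta}{\delta})$ in $\mathfrak S_\delta$ are there exactly to make these Hölder splittings close with a small positive power of $T$ to spare, which is the source of the constants depending on $T$ that the authors flag in remark (ii) after Theorem \ref{THM:LWP1}. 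Throughout, whenever $z$ carries a derivative I would only ever differentiate $z$ once (integer order), moving fractional regularity onto the $v$-factors or avoiding it entirely, which is legitimate since $z\in S^s(I)$ controls $\langle\nabla\rangle^s z$ in several Strichartz norms and $s$ can be taken $<1$ but the product estimate only needs $L^p$-type control of $\nabla z$ paired with $L^q$-control of powers of $v$.

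For the difference estimate \eqref{nonlin2} the scheme is identical, starting from the elementary pointwise bound
\[
\big|\N(v_1+z) - \N(v_2+z)\big| \lesssim \big(|v_1|^{\frac{4}{d-2}} + |v_2|^{\frac{4}{d-2}} + |z|^{\frac{4}{d-2}}\big)|v_1-v_2|,
\]
together with its differentiated analogue, which again holds because $\frac{4}{d-2}\le 1$ for $d\ge 6$ and equals $1$ for $d=5$ (so the ``derivative'' $\frac{d}{dr}(|r|^{4/(d-2)})$ is bounded in the right sense, with the mild singularity only worsening constants, not integrability). One then repeats the Hölder/Strichartz bookkeeping verbatim, with $v_1-v_2$ playing the role of the single distinguished factor and $\|v_1-v_2\|_{Y^1_T}$ pulled out front. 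The main obstacle, and the place I would spend the most care, is making the non-smooth (fractional) Leibniz and chain rules rigorous for $|u|^{4/(d-2)}u$ at the integer regularity level $W^{1,r}$ while ensuring that no fractional derivative ever falls on $z$ (since $z\notin W^{1,p}$ a.s., as the paper itself emphasizes in the $d=4$ discussion); concretely this means using a Littlewood-Paley decomposition of the product, estimating the ``low-high'' pieces where the derivative hits a $v$-factor directly, and for the ``high-high'' or ``derivative on $z$'' pieces using only that $z\in S^s$ supplies $\langle\nabla\rangle z$-type control in the several exponents packaged into $\mathfrak S_\delta$, together with the bilinear refinement (Lemma \ref{LEM:biStv}) to recover the $\eps$ of room lost to $1-\frac1d<s<1$. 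Once the pointwise fractional-calculus bounds are in hand, the remaining work is a mechanical but lengthy Hölder-in-$(t,x)$ enumeration over the finitely many terms, each contributing either $\|v\|_{Y^1_T}^{(d+2)/(d-2)}$ or $T^\theta M^{(d+2)/(d-2)}$, with $\delta=\delta(d,s)$ and $\theta=\theta(d,s)$ fixed small at the end so that all the time-exponents obtained from Hölder are strictly positive exactly in the stated range $1-\frac1d<s<1$.
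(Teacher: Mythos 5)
Your proposal follows essentially the paper's scheme: reduce the $N^1_T$-norm to a dual space-time pairing via Lemma~\ref{LEM:lin}, work with the genuine first-order derivative $\nabla$ rather than $\jb{\nabla}$ (the paper encodes this as $\|u\|_{X^1}\sim\|u\|_{X^0}+\|\nabla u\|_{X^0}$, equation~\eqref{equiv}, which sidesteps any fractional chain rule), treat the derivative-on-$v$ terms by H\"older and Strichartz on the exponents in $\mathfrak S_\delta$, treat the derivative-on-$z$ terms via Littlewood--Paley and the bilinear refinement Lemma~\ref{LEM:biStv}, and do the difference estimate by the fundamental theorem of calculus applied to $\partial_\zeta\N$, $\partial_{\bar\zeta}\N$.

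The one concrete misstep is the claim to ``dispose of'' the purely random term by putting $\nabla(|z|^{4/(d-2)}z)$ in $L^{q_d'}_t W^{1,r_d'}_x$ and bounding it by $\|z\|_{S^s(I)}^{(d+2)/(d-2)}$: the $S^s$-norm, and the set $E_{M,T}$, only control $\jb{\nabla}^s z$ with $s<1$, so no $W^{1,r}$-norm of $z$ is available (the paper emphasizes $z\notin W^{1,p}$ a.s.\ in the $d=4$ discussion). The derivative does fall on $z$ pointwise, but the loss is recovered only through the two-regime frequency analysis: for $N_2\gtrsim N_1^{1/(d-1)}$ (Subcase~2.a) one compensates with $N_1^{1-s}N_2^{-s}$ and plain H\"older/Strichartz with no bilinear estimate, while for $N_2\ll N_1^{1/(d-1)}$ (Subcase~2.b) one uses Lemma~\ref{LEM:biStv}. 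Your final paragraph mentions only the bilinear regime, and the phrase ``$z\in S^s$ supplies $\jb{\nabla}z$-type control'' should read $\jb{\nabla}^s z$; you should make sure the non-bilinear regime is also covered, since it is where the threshold $s>1-\frac1d$ first arises in the paper. Finally, a small numerical slip: $\frac{4}{d-2}=1$ for $d=6$ and $\frac{4}{d-2}=\frac43>1$ for $d=5$, not the other way around.
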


Note that we have 
\begin{align}
 \| u \|_{X^1_T} \sim \| u \|_{X^0_T} + \| \nb u \|_{X^0_T}.
 \label{equiv}
\end{align}

\noi
It is crucial that we handle a regular gradient $\nb$ rather than $\jb{\nb}$
for our purpose.
We also point out that once we fix the set $E_{M, T}$,
the nonlinear estimates are entirely {\it deterministic}.

\begin{proof}
{\bf Part 1:}
We first  prove \eqref{nonlin1}.
In view of  \eqref{equiv}, Lemma \ref{LEM:lin} and Definition \ref{def:time_tr}
of the time restriction norm, 
it suffices to show\footnote{Strictly speaking, we need to work with a truncated nonlinearity
as in \cite{BOP2} so that Lemma \ref{LEM:lin} is applicable.
This modification, however, is standard and we omit details.
See \cite{BOP2} for the details.}
\begin{align}
\bigg| \intt _{[0,T] \times \R^{d}}  \mathcal{N}(v+z) \cdot w dx dt \bigg|
\lesssim \| v \|_{Y^1}^{\frac{d+2}{d-2}} + T^{\theta} M^{\frac{d+2}{d-2}},
\label{nonlin3}\\
\bigg| \intt _{[0,T] \times \R^{d}} \nb \mathcal{N}(v+z) \cdot w dx dt \bigg|
\lesssim \| v \|_{Y^1}^{\frac{d+2}{d-2}} + T^{\theta} M^{\frac{d+2}{d-2}},
\label{nonlin3a}
\end{align}

\noi
for all $w \in Y^{0}$ with $\|w \|_{Y^0} = 1$
and any $\omega \in E_{M, T}$,.

Let us first consider \eqref{nonlin3}.
H\"older's inequality and 
the embedding $W^{\frac{4}{d+2}, r_d}(\R^d) \hookrightarrow L^{\frac{d+2}{d-2}r_d'}(\R^d)$ yield
\begin{align}
\text{LHS of \eqref{nonlin3}}
& \les \big\| |v+z|^{\frac{d+2}{d-2}} \big\|_{L_T^{q_d'} L_x^{r_d'}} \| w \|_{L_T^{q_d} L_x^{r_d}}
\les\| v+z \|_{L_T^{q_d} L_x^{\frac{d+2}{d-2} r_d'}}^{\frac{d+2}{d-2}} \notag \\
& \les \| v \|_{Y^1}^{\frac{d+2}{d-2}}+  \|  z \|^{\frac{d+2}{d-2}}_{L_T^{q_d}L_x^{\frac{d+2}{d-2} r_d'}} \notag\\
& \lesssim \| v \|_{Y^1}^{\frac{d+2}{d-2}} + (T^{\delta} M)^{\frac{d+2}{d-2}}
\label{X1}
\end{align}

\noi
for any $\omega \in E_{M, T}$,
where we used
\[
\|  z \|_{L_T^{q_d}L_x^{\frac{d+2}{d-2} r_d'}}
\le T^{\dl }\|  z \|_{L_T^{\frac{q_d}{1-\dl q_d}}L_x^{\frac{d+2}{d-2} r_d'}}
\le T^{\dl }M .\]

Next, we consider \eqref{nonlin3a}.
The contribution from  $\P_{\leq 1}w$ can be estimated 
in an analogous manner to the computation above.
Hence, without loss of generality, we assume   $w=\P_{>1} w$
in the following.

We first prove \eqref{nonlin3a} for  $\N(u) = |u|^\frac{d+2}{d-2}$.
With 
\begin{align}
\nabla (|f|^{\alpha}) = \alpha |f|^{\alpha-2} \Re (f \nabla \overline{f}),
\label{nonlin3b}
\end{align}

\noi
the estimate \eqref{nonlin3a} is reduced to showing 
\begin{equation} \label{nonlin4}
\bigg|\intt _{[0,T] \times \R^{d}}  (\nabla w_1) (v+z)
|v+z|^{\frac{6-d}{d-2}} w dx dt \bigg|\lesssim \| v \|_{Y^1}^{\frac{d+2}{d-2}} + T^{\theta}M^{\frac{d+2}{d-2}}
\end{equation}

\noi
for  $w_1 = \cj v$ or $\cj z$.
A small but important observation is that a derivative
does not fall on the third factor with the absolute value.
In the following, we preform analysis
on the relative sizes of the frequencies of the first two factors.

\medskip
\noi
$\bullet$ {\bf Case 1:}  $w_1=\cj v$.
\quad 
In this case, from  Lemma \ref{LEM:Stv} with \eqref{N2} and \eqref{ER}, we have
\begin{align}
\text{LHS of \eqref{nonlin4} }
& \lesssim  \| \nabla v \|_{L_T^{q_d} L_x^{r_d}} \| v+z \|_{L_T^{q_d} L_x^{\rho_d}} 
\big\| |v+z|^{\frac{6-d}{d-2}}\big\|_{L_T^{\frac{2d}{6-d}} L_x^{\frac{2d^2}{(6-d)(d-2)}}} 
\| w \|_{L_T^{q_d} L_x^{r_d}} \notag \\
& \lesssim \| v \|_{Y^1}  \| v+z  \|_{L_T^{q_d} L_x^{\rho_d}}^{\frac{4}{d-2}}  
\lesssim \| v \|_{Y^1} \big\{ \| v \|_{L_T^{q_d} W_x^{1, r_d}}
+ \| z \|_{L_T^{q_d} L_x^{\rho_d}}\big\}^{\frac{4}{d-2}} \notag \\
& \lesssim \| v \|_{Y^1} \Big\{ \| v \|_{Y^1}^{\frac{4}{d-2}} + (T^{\delta} M)^{\frac{4}{d-2}} \Big\}
\label{X2}
\end{align}

\noi
for any $\omega \in E_{M, T}$.
Then, \eqref{nonlin4} follows from Young's inequality.

\medskip
\noi
$\bullet$ {\bf Case 2:}  $w_1=\cj z$.
\quad 
Using the Littlewood-Paley decomposition, we have
\[
\text{LHS of \eqref{nonlin4} }
\lesssim \sum_{N_1, N_2 \in 2^{\NB_0}} 
\bigg|\intt_{[0,T] \times \R^{d}}N_1 \P_{N_1}\cj  z \, \P_{N_2}(v+z) |v+z|^{\frac{6-d}{d-2}} w  dxdt\bigg|.
\]

\noi
\underline{Subcase 2.a:}
We first consider the contribution from  $N_2 \ges N_1^{\frac{1}{d-1}}$.
Note that we have
\[
\|  z \|_{L_T^{q_d}(W_x^{s, r_d}\cap L_x^{\rho_d})}
\le T^{\dl }\|  z \|_{L_T^{\frac{q_d}{1-\dl q_d}}(W_x^{s, r_d}\cap L_x^{\rho_d})}
\le T^{\dl }M \]

\noi
on $E_{M, T}$.
Then, proceeding as in Case 1 with  Lemma \ref{LEM:Stv},  \eqref{N2}, and \eqref{ER}, we have
\begin{align}
 \text{LHS of }& \eqref{nonlin4} 
 \lesssim \sum_{\substack{N_1, N_2 \in 2^{\NB_0} \\ N_2 \ges N_1^{\frac{1}{d-1}}}} 
N_1 \| \P_{N_1}z \|_{L_T^{q_d} L_x^{\rho_d}} \| \P_{N_2}(v+z) \|_{L_T^{q_d} L_x^{r_d}}  \notag \\
&  \phantom{XXXXXX}
\times \big\| |v+z|^{\frac{6-d}{d-2}} \big\|_{L_T^{\frac{2d}{6-d}} L_x^{\frac{2d^2}{(6-d)(d-2)}}} 
\| w \|_{L_T^{q_d} L_x^{r_d}} \notag \\
& \lesssim \sum_{\substack{N_1, N_2 \in 2^{\NB_0} \\ N_2 \ges N_1^{\frac{1}{d-1}}}} 
N_1^{-s+1} N_2^{-s}
\| \P_{N_1}z \|_{L_T^{q_d} W_x^{s, \rho_d}} 
 \big\{ \|\P_{N_2} v \|_{L_T^{q_d} W_x^{s, r_d}}
+ \| \P_{N_2}z \|_{L_T^{q_d} W_x^{s, r_d}}\big\} \notag \\
&  \phantom{XXXXXX}
\times \big\{ \| v \|_{L_T^{q_d} W_x^{1, r_d}}
+ \| z \|_{L_T^{q_d} L_x^{\rho_d}}\big\}^{\frac{6-d}{d-2}}  \| w \|_{L_T^{q_d} L_x^{r_d}} \notag  \\
& \lesssim \sum_{\substack{N_1, N_2 \in 2^{\NB_0} \\ N_2 \ges N_1^{\frac{1}{d-1}}}} 
N_1^{-s+1} N_2^{-s} T^{\delta} M \Big\{ \| v \|_{Y^1}^{\frac{4}{d-2}} + (T^{\delta} M)^{\frac{4}{d-2}} \Big\} \notag \\
& \les \| v \|_{Y^1}^{\frac{d+2}{d-2}} + (T^{\delta} M)^{\frac{d+2}{d-2}}
\label{X3}
\end{align}

\noi
for any $\omega \in E_{M, T}$,
provided that $s>1-\frac{1}{d}$.

 \smallskip

\noi
\underline{Subcase 2.b:}
Next, we estimate the contribution from  $N_2 \ll N_1^{\frac{1}{d-1}}$.
Noting that  $\big( \frac{4d}{(6-d)(d-2)}, \frac{d^2}{d^2-4d+6} \big)$ is an admissible pair, 
H\"older's inequality and Lemma \ref{LEM:Stv} yield
\begin{align}
\| w \|_{L_T^{\frac{d}{d-3}} L_x^{\frac{d^2}{d^2-4d+6}}} 
\leq T^{\frac d4 - 1}\| w \|_{L_T^{\frac{4d}{(6-d)(d-2)}} L_x^{\frac{d^2}{d^2-4d+6}}} 
\les T^{\frac d4 - 1}\| w \|_{Y^0}.
\label{nonlin4a}
\end{align}

\noi
Then,  by applying Lemma  \ref{LEM:biStv} 
with Lemmas  \ref{LEM:Stv} and \ref{LEM:lin}
and \eqref{ER}, we obtain 
\begin{align}
 \text{LHS of \eqref{nonlin4}} 
& \lesssim \sum_{\substack{N_1, N_2 \in 2^{\NB_0} \\ N_2 \ll N_1^{\frac{1}{d-1}}}} 
N_1 \| \P_{N_1}z \P_{N_2}(v+z) \|_{L^2_{T, x}} 
\notag \\
&  \phantom{XXXXXX}
\times \big\| |v+z|^{\frac{6-d}{d-2}} \big\|_{L_T^{\frac{2d}{6-d}} L_x^{\frac{2d^2}{(6-d)(d-2)}}} 
\| w \|_{L_T^{\frac{d}{d-3}} L_x^{\frac{d^2}{d^2-4d+6}}} \notag \\
& \lesssim T^{\frac{d}{4}-1} \sum_{\substack{N_1, N_2 \in 2^{\NB_0} \\ N_2 \ll N_1^{\frac{1}{d-1}}}} 
N_1 N_2^{\frac{d}{2}-1} \bigg( \frac{N_2}{N_1} \bigg)^{\frac{1}{2}-\eps} 
\| \P_{N_1} z \|_{Y^0_T} \| \P_{N_2} (v+z) \|_{Y^0_T} 
\notag \\
&  \phantom{XXXXXX}
\times 
\big\{ \| v \|_{L_T^{q_d} W_x^{1, r_d}}
+ \| z \|_{L_T^{q_d} L_x^{\rho_d}}\big\}^{\frac{6-d}{d-2}}  \| w \|_{Y^0} \notag \\
& \lesssim T^{\frac{d}{4}-1} \sum_{\substack{N_1, N_2 \in 2^{\NB_0} \\ N_2 \ll N_1^{\frac{1}{d-1}}}} 
N_1^{-s+\frac{1}{2}+\eps} N_2^{-s+\frac{d-1}{2}-\eps} M \notag \\
&  \phantom{XXXXXX}
\times 
( \| v \|_{Y^s} + M ) 
\Big\{ \| v \|_{Y^1}^{\frac{6-d}{d-2}} + (T^{\delta}M)^{\frac{6-d}{d-2}} \Big\} \notag \\
& \lesssim T^{\theta'} M \Big\{ \| v \|_{Y^1}^{\frac{4}{d-2}} + M^{\frac{4}{d-2}} \Big\}\notag\\
& \lesssim \| v \|_{Y^1}^{\frac{d+2}{d-2}} + T^{\theta} M^{\frac{d+2}{d-2}}
\label{X4}
\end{align}

\noi
for any $\omega \in E_{M, T}$,
provided that $s>1-\frac{1}{d}$.
This proves  \eqref{nonlin1}
for  $\N(u) = |u|^\frac{d+2}{d-2}$.

We now  prove \eqref{nonlin3a} for  $\N(u) = |u|^\frac{4}{d-2} u $.
In this case, we have\footnote{Here, we assumed that $\dd \{ x \in \R^d: f(x) = 0\}$
has measure 0. This assumption can be verified for smooth truncated 
$\P_{\leq N} z$
and smooth $v_N$. 
Then, we can establish the desired estimates 
for smooth $\P_{\leq N} z$ and $v_N$ and take a limit as $N\to \infty$.}
 \begin{align}
 \nabla (|f|^{\alpha-1}f) = 
 (\al -1) |f|^{\alpha-2} \tfrac{f}{|f|}  \Re (f \nabla \overline{f})
 +  |f|^{\alpha-1} \nb f.
 \label{nonlin5}
 \end{align}

\noi
Noting that  
$\big||f|^{\alpha-3} f\big|  = |f|^{\al - 2}$, 
we can estimate  the first term in \eqref{nonlin5}
using \eqref{nonlin4}.
It remains to estimate the contribution from the second term in \eqref{nonlin5}. 
Namely, we prove
\begin{equation} \label{nonlin6}
\bigg|\intt _{[0,T] \times \R^{d}}  
(\nabla w_1) |v+z|^{\frac{4}{d-2}} w  
dx dt\bigg| \lesssim \| v \|_{Y^1}^{\frac{d+2}{d-2}} + T^{\theta}M^{\frac{d+2}{d-2}}
\end{equation}

\noi
for  $w_1 = v$ or $z$.
When $w_1 = v$, \eqref{nonlin6} follows from Case 1 above.
Hence, we assume that $w_1 = z$ in the following.
By writing
$(\nabla z) |v+z|^{\frac{4}{d-2}} 
=(\nabla z) |v+z|\cdot  |v+z|^{\frac{6-d}{d-2}} $, 
it follows from 
 Lemma \ref{LEM:Stv} and  \eqref{nonlin4a} with \eqref{ER} that 
\begin{align}
 \text{LHS of \eqref{nonlin6}} 
& \les
 \| (\nb z) (v+z) \|_{L^2_{T, x}} 
 \big\| |v+z|^{\frac{6-d}{d-2}} \big\|_{L_T^{\frac{2d}{6-d}} L_x^{\frac{2d^2}{(6-d)(d-2)}}} 
\| w \|_{L_T^{\frac{d}{d-3}} L_x^{\frac{d^2}{d^2-4d+6}}}\notag\\
& \lesssim \sum_{N_1, N_2 \in 2^{\NB_0}}
N_1 \| \P_{N_1}z \P_{N_2}(v+z) \|_{L^2_{T, x}}  \notag\\
&  \phantom{XXXXXX}
\times
 \big\| |v+z|^{\frac{6-d}{d-2}} \big\|_{L_T^{\frac{2d}{6-d}} L_x^{\frac{2d^2}{(6-d)(d-2)}}} 
\| w \|_{L_T^{\frac{d}{d-3}} L_x^{\frac{d^2}{d^2-4d+6}}}\notag \\
& \lesssim T^{\frac{d}{4}-1} 
\Big\{ \| v \|_{Y^1}^{\frac{6-d}{d-2}} + (T^{\delta}M)^{\frac{6-d}{d-2}} \Big\} 
 \sum_{N_1, N_2 \in 2^{\NB_0}}
N_1 \| \P_{N_1}z \P_{N_2}(v+z) \|_{L^2_{T, x}} 
\label{nonlin7}
\end{align}

\noi
for any $\omega \in E_{M, T}$.
When $N_2 \ll N_1^{\frac{1}{d-1}}$, 
we can apply Lemma \ref{LEM:biStv} as in  Subcase 2.b 
and establish  \eqref{nonlin6}.

Let us consider the remaining case
$N_2 \ges N_1^{\frac{1}{d-1}}$.
As in Subcase 2.a, we have
\begin{align*}
\sum_{\substack{N_1, N_2 \in 2^{\NB_0} \\ N_2 \ges N_1^{\frac{1}{d-1}}}} 
N_1 \| \P_{N_1}z \P_{N_2}z \|_{L^2_{T, x}} 
& \les \sum_{\substack{N_1, N_2 \in 2^{\NB_0} \\ N_2 \ges N_1^{\frac{1}{d-1}}}} 
N_1^{-s+1} N_2^{-s}
\| \P_{N_1}z \|_{L^4_{T} W^{s, 4}_x} \|  \P_{N_2}z \|_{L^4_{T} W^{s, 4}_x} \notag \\
& \les (T^{\delta}M)^2
\end{align*}

\noi
for any $\omega \in E_{M, T}$,
provided that $s>1-\frac{1}{d}$.
Similarly,  it follows from Sobolev's inequality
(with sufficiently small $\dl > 0$ such that $\frac{1-s}{d} \geq \frac 12 - \frac 1{2+\dl}$)
and \eqref{ER} that 
\begin{align*}
\sum_{\substack{N_1, N_2 \in 2^{\NB_0} \\ N_2 \ges N_1^{\frac{1}{d-1}}}} 
N_1 \| \P_{N_1}z \P_{N_2}v \|_{L^2_{T, x}} 
& \les \sum_{\substack{N_1, N_2 \in 2^{\NB_0} \\ N_2 \ges N_1^{\frac{1}{d-1}}}} 
N_1^{-s+1} N_2^{-s}
\|  \P_{N_1}z \|_{L^4_T W^{s, \frac{4+2\dl}{\dl}}_x} 
\|\P_{N_2}v \|_{L^4_T W^{s, ^{2+\dl}}_x} \notag \\
& \les T^\frac{1}{4} \| v\|_{Y^1}M
\end{align*}
 
\noi
for any $\omega \in E_{M, T}$,
provided that $s>1-\frac{1}{d}$.
This proves  \eqref{nonlin1}
for  $\N(u) = |u|^\frac{4}{d-2}u$.

\medskip

\noi
{\bf Part 2:}
Next, we prove the difference estimates \eqref{nonlin2}.
Our main goal is to prove 
\begin{align}
\bigg|\intt _{[0,T] \times \R^{d}}  
 \big\{ \N(v_1 + z) & - \N(v_2 + z)\big\} w dx dt \bigg| \notag\\
& \les  \Big\{ \| v_1 \|_{Y^1}^{\frac{4}{d-2}} + \| v_2 \|_{Y^1}^{\frac{4}{d-2}} 
+ T^{\theta} M^{\frac{4}{d-2}} \Big\} \| v_1-v_2 \|_{Y^1}, 
\label{diff-1}
\end{align}

\noi
and 
\begin{align}
\bigg|\intt _{[0,T] \times \R^{d}}  
 \big\{\nabla \N(v_1 + z) & - \nb\N(v_2 + z)\big\} w dx dt \bigg| \notag\\
& \les  \Big\{ \| v_1 \|_{Y^1}^{\frac{4}{d-2}} + \| v_2 \|_{Y^1}^{\frac{4}{d-2}} 
+ T^{\theta} M^{\frac{4}{d-2}} \Big\} \| v_1-v_2 \|_{Y^1}
\label{diff0}
\end{align}

\noi
for all $w \in Y^{0}$ with $\|w \|_{Y^0} = 1$.
In the following, we only consider  \eqref{diff0}
and discuss how to apply the computations in Part 1.
The first difference estimate \eqref{diff-1}
follows in a similar, but simpler manner.

\medskip

\noi
$\bullet$ {\bf Case 3:}  $\N(u) = |u|^\frac{d+2}{d-2}$.
\quad Let $F(\zeta) = F(\zeta, \cj \zeta) = |\zeta|^\frac{6-d}{d-2} \zeta$.
Then, we have
\begin{align}
 \dd_\zeta F = \tfrac{2+d}{2d-4} |\zeta|^\frac{6-d}{d-2}
\qquad \text{and} \qquad
\dd_{\cj \zeta} F = \tfrac{6-d}{2d-4} |\zeta|^\frac{6-d}{d-2} \tfrac{\zeta^2}{|\zeta|^2}.
\label{diff0a}
\end{align}

\noi
By Fundamental Theorem of Calculus, we have
\begin{align}
F(v_1+z) - F(v_2 + z) 
= \int_0^1 \dd_\zeta F(v_2 & + z + \theta(v_1 - v_2))(v_1- v_2)\notag\\
& + \dd_{\cj \zeta}F(v_2 + z + \theta(v_1 - v_2))(\cj {v_1- v_2}) d\theta.
\label{diff1}
\end{align}

\noi
Then, from \eqref{nonlin3b} and \eqref{diff1}, we have
\begin{align}
\nabla (|v_1 + z|^\frac{d+2}{d-2}) 
& - \nabla (|v_2 + z|^\frac{d+2}{d-2}) \notag\\
& = \tfrac{d+2}{d-2} \Re\big\{ F(v_1 + z) \nabla (\cj{v_1 + z})
-  F(v_2 + z) \nabla (\cj{v_2 + z})\big\}\notag\\
& = \tfrac{d+2}{d-2} \Re\bigg\{ F(v_1 + z) \nabla (\cj{v_1 - v_2}) \notag\\
& \hphantom{X}
+  \int_0^1 \dd_\zeta F(v_2  + z + \theta(v_1 - v_2))(v_1- v_2)d\theta
\cdot  \nabla (\cj{v_2 + z})\notag\\
& \hphantom{X}
+ \int_0^1  \dd_{\cj \zeta}F(v_2 + z + \theta(v_1 - v_2))(\cj{ v_1-  v_2}) d\theta
\cdot  \nabla (\cj{v_2 + z})  \bigg\}.
\label{diff2}
\end{align}

The contribution to \eqref{diff0} from the first term on the right-hand side of \eqref{diff2}
can be estimated as in \eqref{nonlin4}.
As for the second term on the right-hand side of~\eqref{diff2}, 
the estimate~\eqref{diff0} is reduced to 
\begin{align*}
 \int_0^1 \bigg|\intt _{[0,T] \times \R^{d}}  
(\nb w_1) (v_1 - v_2)
& \cdot \big|  v_2  +  z + \theta(v_1 - v_2)\big|^\frac{6-d}{d-2}
  w dx dt \bigg|  d\theta \notag\\
& \les  \Big\{ \| v_1 \|_{Y^1}^{\frac{4}{d-2}} + \| v_2 \|_{Y^1}^{\frac{4}{d-2}} 
+ T^{\theta} M^{\frac{4}{d-2}} \Big\} \| v_1-v_2 \|_{Y^1}
\end{align*}

\noi
for  $w_1 = \cj v_2$ or $\cj z$,
which once again follows from \eqref{nonlin4} in Part 1.
In view of \eqref{diff0a}, we have 
 $|\dd_{\cj \zeta} F| \sim |\zeta|^\frac{6-d}{d-2}$.
 Hence, 
the third  term on the right-hand side of \eqref{diff2}
can be estimated in a similar manner.

\medskip

\noi
$\bullet$ {\bf Case 4:}  $\N(u) = |u|^\frac{4}{d-2}u $.
\quad 
In view of \eqref{nonlin5}, there are two contributions to 
\[\nabla \N(v_1 + z)  - \nb \N(v_2 + z).\]

\noi
Let $G(\zeta) = G(\zeta, \cj \zeta) = |\zeta|^\frac{8-2d}{d-2} \zeta^2$.
Then, we have
\begin{align}
 \dd_\zeta G = \tfrac{d}{d-2} |\zeta|^\frac{6-d}{d-2} \tfrac{\zeta}{|\zeta|}
\qquad \text{and} \qquad
\dd_{\cj \zeta} G = \tfrac{4-d}{d-2} |\zeta|^\frac{6-d}{d-2} \tfrac{\zeta^3}{|\zeta|^3}.
\label{diff4}
\end{align}

\noi
Next, let $H(z) = H(\zeta, \cj \zeta) = |\zeta|^\frac{4}{d-2} $.
Then, we have
\begin{align}
 \dd_\zeta H = \tfrac{2}{d-2} |\zeta|^\frac{6-d}{d-2} \tfrac{\cj \zeta}{|\zeta|}
\qquad \text{and} \qquad
\dd_{\cj \zeta} H = \tfrac{2}{d-2} |\zeta|^\frac{6-d}{d-2} \tfrac{\zeta}{|\zeta|}.
\label{diff5}
\end{align}

\noi
Then, from \eqref{nonlin5},  \eqref{diff4}, 
and  \eqref{diff5}, 
we have
\begin{align*}
\nabla \N(v_1 + z) &  - \nb \N(v_2 + z)\notag\\
& = \tfrac{4}{d-2}\Re\big\{G(v_1 + z)\nb (\cj{v_1 + z})
- G(v_2 + z)\nb (\cj{v_2 + z})\big\}\notag\\
& \hphantom{XX}
+ H(v_1 + z)\nb (v_1 + z)
- H(v_2 + z)\nb (v_2 + z).
\end{align*}

\noi
Noting that 
\[ |\dd_{ \zeta} G| \sim |\dd_{\cj \zeta} G| \sim |\dd_{ \zeta} H| \sim |\dd_{\cj \zeta} H|
\sim  |\zeta|^\frac{6-d}{d-2},\]

\noi
we can use \eqref{diff1} with $G$ and $H$ replacing $F$
and repeat the computation in Part 1 to establish \eqref{diff0}.
This completes the proof of Proposition \ref{PROP:nonlin1}.
\end{proof}

\section{Proof of Theorems \ref{THM:LWP1} and \ref{THM:LWP2}} \label{SEC:thm}

We present the  proof of Theorems \ref{THM:LWP1} and \ref{THM:LWP2}.
Namely, we solve the following fixed point problem:
\[ v = -i  \I [\mathcal{N} (v+z)],\]

\noi
where 
\[ \N(u) = |u|^\frac{4}{d-2} u \qquad \text{or}\qquad 
\N(u) = |u|^\frac{d+2}{d-2}.
\]

Let $\eta>0$ be sufficiently small  such that
\begin{equation*}
2 C_1 \eta^{\frac{4}{d-2}} \le 1 
\qquad \text{and} \qquad 3 C_2 \eta ^{\frac{4}{d-2}} \le \tfrac{1}{2},
\end{equation*}

\noi
where $C_1$ and $C_2$ are the constants  in \eqref{nonlin1} and \eqref{nonlin2}.
Given  $M>0$, we set 
\begin{align}
T := \min \Big\{\big( \tfrac{\eta}{M} \big)^{\frac{d+2}{d-2}}, 
\big( \tfrac{\eta}{M} \big)^{\frac{4}{d-2}}\Big\}^\frac{1}{\theta}.
\label{LWP1}
\end{align}

\noi
Then, it follows from  Proposition \ref{PROP:nonlin1} with $X^1_T \hookrightarrow Y^1_T$
that for each $\o \in E_{M, T}$,  
the mapping $v \mapsto -i \I [\mathcal{N} (v+z)]$ is a contraction on the ball $B_{\eta}\subset X^1_T$ defined by
\[
B_{\eta} := \{ v \in X^1_T : \| v \|_{X^1_T} \le \eta \}.
\]

\noi
Moreover, it follows from 
Lemmas \ref{LEM:prob1} and \ref{LEM:prob11} with \eqref{LWP1} imply the following tail estimate:
\begin{align*}
P(\Omega \setminus E_{M, T}) 
& \le C \exp \bigg( -c \frac{M^2}{\| \phi \|_{H^s}^2} \bigg) 
+ C \exp \bigg( -c \frac{M^2}{T^\g \| \phi \|_{H^s}^2} \bigg)\\ 
& \leq C \exp \bigg( - \frac{c}{T^\g \| \phi \|_{H^s}^2} \bigg) 
\end{align*}

\noi
for some $\gamma > 0$.
This proves almost sure local  well-posedness
of \eqref{NLS} and \eqref{NLS2}.

\section{A variant of almost sure local well-posedness}\label{SEC:LWP2}

In this section, we briefly discuss the proof of Proposition \ref{PROP:LWP3}.
In particular, we consider the perturbed NLS \eqref{NLS4}
with a non-zero initial condition $v_0$. 
This will be useful in proving Theorems \ref{THM:GWP} and 
\ref{THM:4}.
As in \cite{BOP2}, 
we consider the following Cauchy problem for NLS with a perturbation:
\begin{equation}
\begin{cases}
 i\partial _t v + \Delta  v =  \mathcal{N} (v+f), \\
 v|_{t= 0} =v_0 \in H^1(\R^d) , 
\end{cases}
\label{NLS5a}
\end{equation}

\noi
where $f$ is a given deterministic function, satisfying certain regularity conditions.
This allows us to separate the probabilistic and deterministic components
of the argument in a clear manner.

First, note that, since our initial condition is not 0, 
the $Y^1_T$-norm of the solution $v$ does not tend to 0 even when $T \to 0$.
Hence, we need to use an auxiliary norm
that tends to 0 as $T \to 0$.
As a corollary to (the proof of) Proposition \ref{PROP:nonlin1}, we obtain the following
nonlinear estimates, which are stated for a general time interval $I \subset \R$.
Note that 
all the terms on the right-hand side in the first estimate \eqref{Y1} have
(i) two factors of the $L^{q_d}_t(I;  W_x^{1, r_d})$-norm of $v$ (which is weaker than the $X^1(I)$-norm)
or (ii) a factor of $|I|^\theta$,  which can be made small by shrinking the interval $I$.

In the following, 
let  $(q_d, r_d)$ be the admissible pair defined in \eqref{qr1}.
Given $\dl > 0$, $M>0$,  and an interval $I$, define  $\wt E_M(I)$ by 
\begin{align}
\wt E_{M}(I) := \big\{ f \in Y^s(I) \cap S^s(I):    \| f \|_{Y^s}(I) 
+ \| f \|_{S^s (I )} \le M \big\} ,
\label{ER2}
\end{align}

\noi
where $S^s(I) = S^s(I; \dl)$ is as in \eqref{S0}.
\noi
When $I = [0, T)$, we simply write $\wt E_{M, T} = \wt E_M([0, T))$.

\begin{corollary} \label{COR:nonlin2}
Let  $d=5,6$,   $1-\frac{1}{d}<s<1$, and 
\[ \N(u) = |u|^\frac{4}{d-2} u \qquad \text{or}\qquad 
\N(u) = |u|^\frac{d+2}{d-2}.
\]

\noi
Then, there exist  sufficiently small $\delta = \dl (d, s)>0$ 
and $\theta = \theta(d, s)> 0$ such that 
\begin{align}
&  \|  [\mathcal{N} (v+f)\|_{N^1(I)}
 \les  
\|v\|_{L^{q_d}_t(I;  W_x^{1, r_d})}^{\frac{d+2}{d-2}}
 + |I|^{\theta} M^{\frac{d+2}{d-2}} 
+ |I|^{\theta} M \|v\|_{L^{q_d}_t(I;  W_x^{1, r_d})}^{\frac{6-d}{d-2}} \| v \|_{Y^1(I)}, 
\label{Y1} \\
& \| \mathcal{N} (v_1+f) - \mathcal{N}   (v_2+f)\|_{N^1(I)}   \notag\\
& \hphantom{XXXXX}
\les  \Big\{ \| v_1 \|_{L^{q_d}_t(I; W_x^{1, r_d})}^{\frac{4}{d-2}} + \| v_2 \|_{L^{q_d}_t(I; W_x^{1, r_d})}^{\frac{4}{d-2}}
+ |I|^{\theta} M^{\frac{4}{d-2}} \Big\} \| v_1-v_2 \|_{Y^1(I)},
 \label{Y2}
\end{align}

\noi
for any interval $I \subset \R$, 
 $v, v_1, v_2 \in Y^1(I)$, and $f \in \wt E_M(I)$.

\end{corollary}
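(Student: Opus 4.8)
The plan is to deduce Corollary \ref{COR:nonlin2} from the proof of Proposition \ref{PROP:nonlin1} by re-running that argument with two modifications: replacing the random linear solution $z$ by the deterministic function $f$, and keeping track of which factors of $\|v\|_{Y^1}$ can be traded for the weaker Strichartz-type norm $\|v\|_{L^{q_d}_t(I; W^{1,r_d}_x)}$. Since the membership $f \in \wt E_M(I)$ was precisely designed so that $\|f\|_{S^s(I)} \le M$ — and $S^s(I)$ controls exactly the space-time norms of $z$ that were bounded by $M$ on $E_{M,T}$ in the proof of Proposition \ref{PROP:nonlin1} (namely $\|f\|_{L^{q_d}_t W^{1,r_d}_x}$, $\|f\|_{L^{q_d}_t W^{s,\rho_d}_x}$, $\|f\|_{L^4_t W^{s,4}_x}$, $\|f\|_{L^4_t W^{s,(4+2\dl)/\dl}_x}$, etc., via the H\"older-in-time gain $|I|^\dl$) — every occurrence of ``$\omega \in E_{M,T}$'' in that proof is replaced by ``$f \in \wt E_M(I)$'' verbatim. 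The key point to emphasize is that the nonlinear estimates of Proposition \ref{PROP:nonlin1} are already entirely deterministic once the set $E_{M,T}$ is fixed, so no probability enters here.

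First I would observe that in every term of the proof of \eqref{nonlin1} where a factor $\|v+z\|$ or $\|\P_N(v+z)\|$ appeared, one splits it as $\|v\| + \|z\|$; the $z$-part is absorbed into $M$-powers with a $|I|^\theta$ gain, while the $v$-part gives the $\|v\|_{Y^1}$ contributions. For \eqref{Y1} the bookkeeping is slightly finer than in \eqref{nonlin1}: rather than crudely bounding $\|v+z\|^{\frac{d+2}{d-2}} \lesssim \|v\|_{Y^1}^{\frac{d+2}{d-2}} + (T^\dl M)^{\frac{d+2}{d-2}}$, I would expand and retain the genuinely mixed terms, which have the shape $M^j \|v\|^{\frac{d+2}{d-2}-j}$ for $j = 1, \dots, \frac{d+2}{d-2}$ with a $|I|^\theta$ factor whenever $j \ge 1$; the worst such mixed term (one power of $M$, the rest powers of the $v$-norm) is written as $|I|^\theta M \|v\|_{L^{q_d}_t W^{1,r_d}_x}^{\frac{6-d}{d-2}} \|v\|_{Y^1(I)}$, which is exactly the third term on the right of \eqref{Y1}. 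The pure $v$-term is $\|v\|_{L^{q_d}_t W^{1,r_d}_x}^{\frac{d+2}{d-2}}$ — here one checks that in Case 1 and the relevant subcases the $v$-factors all appeared inside $L^{q_d}_t W^{1,r_d}_x$ (or norms controlled by it via Lemma \ref{LEM:Stv} and the embedding \eqref{N2}) rather than needing the full $X^1_T$-norm, except for at most one factor which is left as $\|v\|_{Y^1(I)}$ in the mixed term. The difference estimate \eqref{Y2} follows by the same substitution in Part 2 of the proof of Proposition \ref{PROP:nonlin1}, using \eqref{diff1} and the pointwise bounds $|\dd_\zeta F|, |\dd_{\cj\zeta} F| \sim |\zeta|^{\frac{6-d}{d-2}}$ (and similarly for $G$, $H$), and noting that exactly one factor of $v_1 - v_2$ carries a derivative and is estimated in $Y^1(I)$ while the other $v_i$-factors sit in $L^{q_d}_t W^{1,r_d}_x$.

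The main obstacle — really the only place requiring care — is verifying that the regularity range $1 - \frac1d < s < 1$ still suffices when one insists on placing the $v$-factors in the weaker norm $L^{q_d}_t(I; W^{1,r_d}_x)$ wherever possible, and that the time-powers $|I|^\delta$ on the $f$-factors genuinely reproduce the single exponent $\theta = \theta(d,s) > 0$ claimed (one takes $\theta$ to be the minimum of the various $\delta$-dependent exponents $\theta'$, $\frac d4 - 1$, $\frac14$, etc.\ appearing in \eqref{X3}, \eqref{X4}, \eqref{nonlin7}, each of which is positive for $d = 5, 6$). All of the bilinear Strichartz applications (Lemma \ref{LEM:biStv}) in Subcase 2.b and in \eqref{nonlin7} go through unchanged with $z$ replaced by $f$, since they only used $\|\P_N f\|_{Y^0} \le \|f\|_{Y^0} \le \|f\|_{Y^s} \le M$ — and here I would note that $\wt E_M(I)$ controls $\|f\|_{Y^s(I)}$ directly, which is what those estimates consumed. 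Hence the corollary follows, and I would simply state that the proof is a line-by-line repetition of that of Proposition \ref{PROP:nonlin1} with the indicated substitutions, omitting the routine details.
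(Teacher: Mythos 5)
Your proposal is essentially correct and follows the same route as the paper: re-run the proof of Proposition \ref{PROP:nonlin1} with $z$ replaced by $f\in\wt E_M(I)$, and crucially \emph{refrain} from invoking Lemma \ref{LEM:Stv} so that the $v$-factors remain in $L^{q_d}_t W^{1,r_d}_x$ rather than being upgraded to $Y^1$, while the single $Y^1$-factor in the third term of \eqref{Y1} is the one that bilinear Strichartz (Lemma \ref{LEM:biStv}) forces in \eqref{X4} and \eqref{nonlin7}, with $|I|^\theta$ coming from \eqref{nonlin4a} and the H\"older-in-time gain on $f$. One caveat worth flagging: your ``expand $\|v+f\|^{\frac{d+2}{d-2}}$ and keep the mixed terms $M^j\|v\|^{\frac{d+2}{d-2}-j}$'' heuristic does not literally apply when $d=5$ (the exponent $\frac{7}{3}$ is not an integer, so there is no binomial expansion, and the parenthetical ``norms controlled by $L^{q_d}_t W^{1,r_d}_x$ via Lemma \ref{LEM:Stv}'' has the direction of that lemma backwards); the genuinely mixed term $|I|^\theta M\|v\|_{L^{q_d}_t W^{1,r_d}_x}^{\frac{6-d}{d-2}}\|v\|_{Y^1(I)}$ arises structurally from the dyadic bilinear estimates, not from an algebraic expansion, which is precisely what the paper's one-line observation (``simply by not applying the Strichartz estimates'') captures.
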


\begin{proof}
This corollary follows from the proof of Proposition \ref{PROP:nonlin1}
simply by not applying the Strichartz estimates (Lemma \ref{LEM:Stv}).
In particular, 
a small modification to 
\eqref{X1}, \eqref{X2}, and~\eqref{X3} 
yields \eqref{Y1} for the corresponding cases, where the left-hand side is controlled by 
the first two terms on the right-hand side of \eqref{Y1}.
In \eqref{X4} and \eqref{nonlin7}, 
the subcritical nature of the perturbation $f$ 
allows us to gain a small power of $|I|$ through \eqref{nonlin4a}.
Hence,  we obtain~\eqref{Y1},
 where the left-hand side is controlled by 
the last two terms on the right-hand side of \eqref{Y1}.
The difference estimate \eqref{Y2} also follows from a similar modification.
\end{proof}

By following the proof of Proposition 6.3 in \cite{BOP2}, 
we obtain the following almost sure local well-posedness
of the perturbed NLS \eqref{NLS5a} with non-zero initial data.
Proposition  \ref{PROP:LWP3} in Section \ref{SEC:1} then follows from 
this lemma 
with 
Lemmas \ref{LEM:prob1} and \ref{LEM:prob11}
by setting $f = z^\o = S(t) \phi^\o$.

\begin{lemma}\label{LEM:LWP4}
Assume the hypotheses of Corollary \ref{COR:nonlin2}. 
Given $M > 0$, let $\wt E_M(\cdot) $ be as in~\eqref{ER2}
and let $\theta > 0$ be as in Corollary \ref{COR:nonlin2}.
Then, 
 there exists  small $\eta_0 = \eta_0(\|v_0\|_{H^1}, M)>0$ such that
if
\begin{align*}
 \|S(t-t_0)v_0\|_{L^{q_d}_t(I;  W_x^{1, r_d})}\leq \eta
\qquad \text{and}
\qquad
|I| \leq  \eta^\frac{2}{\theta}
\end{align*}

\noi
for some $\eta \leq \eta_0$ and
some time interval  $I = [t_0, t_1] \subset \R$,
then 
for any $f \in \wt E_M(I)$, 
there exists a unique solution $v \in X^1(I)\cap C(I; H^1(\R^d))$
to \eqref{NLS4}
with $v|_{t = t_0} = v_0$, satisfying 
\begin{align*}
\|v \|_{L^{q_d}_t(I;W_x^{1, r_d})} & \leq 2\eta,\\
\| v - S(t - t_0) v_0 \|_{X^1(I)} & \les \eta.
\end{align*}

\end{lemma}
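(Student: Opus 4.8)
The plan is to solve the perturbed equation \eqref{NLS5a} (with initial time $t_0$ and datum $v_0$) by a contraction mapping argument in the Duhamel formulation, following Proposition~6.3 in \cite{BOP2}. Set $z_0 := S(t-t_0)v_0$ and, for $v \in X^1(I)$, define
\[
\Gamma(v) := z_0 - i\,\I_{t_0}\big[\N(v+f)\big],
\]
so that a fixed point of $\Gamma$ is a solution of \eqref{NLS5a} with $v|_{t=t_0} = v_0$. Recall $\|\I_{t_0}[F]\|_{X^1(I)} = \|F\|_{N^1(I)}$ by definition, $\|z_0\|_{X^1(I)} \leq \|v_0\|_{H^1}$ by Lemma~\ref{LEM:lin}, and $X^1(I) \hookrightarrow Y^1(I) \hookrightarrow L^{q_d}_t(I; W^{1,r_d}_x)$ by \eqref{incl} and Lemma~\ref{LEM:Stv} (here $(q_d,r_d)$ from \eqref{qr1} is admissible with $q_d > 2$). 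The iteration will be run on the complete set
\[
B := \Big\{ v \in X^1(I) :\ \|v\|_{L^{q_d}_t(I; W^{1,r_d}_x)} \leq 2\eta, \quad \|v - z_0\|_{X^1(I)} \leq \eta \Big\},
\]
which is nonempty since $z_0 \in B$ by the hypothesis $\|S(t-t_0)v_0\|_{L^{q_d}_t(I;W^{1,r_d}_x)} \leq \eta$.

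\textbf{Self-mapping.} For $v \in B$, the key observation is that, although $\|v\|_{Y^1(I)}$ does not shrink with $|I|$, it is controlled by $\|v\|_{Y^1(I)} \leq \|z_0\|_{Y^1(I)} + \|v - z_0\|_{Y^1(I)} \lesssim \|v_0\|_{H^1} + 1$ (taking $\eta_0 \leq 1$). Since $\|\Gamma(v) - z_0\|_{X^1(I)} = \|\N(v+f)\|_{N^1(I)}$, inserting the bounds $\|v\|_{L^{q_d}_t W^{1,r_d}_x} \leq 2\eta$, $f \in \wt E_M(I)$, and $|I|^\theta \leq \eta^2$ (from $|I| \leq \eta^{2/\theta}$) into the nonlinear estimate \eqref{Y1} of Corollary~\ref{COR:nonlin2} gives
\[
\|\Gamma(v) - z_0\|_{X^1(I)} \leq C\Big( \eta^{\frac{d+2}{d-2}} + M^{\frac{d+2}{d-2}}\eta^2 + M\big(\|v_0\|_{H^1}+1\big)\eta^{2 + \frac{6-d}{d-2}} \Big) \leq C_*\big(\|v_0\|_{H^1}, M\big)\,\eta^2
\]
for $\eta \leq 1$, because $\tfrac{d+2}{d-2} \geq 2$ and $2 + \tfrac{6-d}{d-2} \geq 2$ when $d = 5,6$. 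Hence $\|\Gamma(v) - z_0\|_{X^1(I)} \leq \eta$ as soon as $\eta_0 \leq C_*^{-1}$, and then $\|\Gamma(v)\|_{L^{q_d}_t W^{1,r_d}_x} \leq \|z_0\|_{L^{q_d}_t W^{1,r_d}_x} + C\|\Gamma(v) - z_0\|_{X^1(I)} \leq \eta + C C_*\eta^2 \leq 2\eta$ after shrinking $\eta_0$ once more; thus $\Gamma(B) \subseteq B$.

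\textbf{Contraction and conclusion.} For $v_1, v_2 \in B$ one has $\Gamma(v_1) - \Gamma(v_2) = -i\,\I_{t_0}[\N(v_1+f) - \N(v_2+f)]$, so the difference estimate \eqref{Y2}, together with $\|v_j\|_{L^{q_d}_t W^{1,r_d}_x} \leq 2\eta$, $|I|^\theta \leq \eta^2$, and $\|v_1 - v_2\|_{Y^1(I)} \lesssim \|v_1 - v_2\|_{X^1(I)}$, yields
\[
\|\Gamma(v_1) - \Gamma(v_2)\|_{X^1(I)} \leq C\Big( \eta^{\frac{4}{d-2}} + M^{\frac{4}{d-2}}\eta^2 \Big)\|v_1 - v_2\|_{X^1(I)} \leq \tfrac12\|v_1 - v_2\|_{X^1(I)}
\]
once $\eta_0 = \eta_0(\|v_0\|_{H^1}, M)$ is small enough (note $\tfrac{4}{d-2} > 0$). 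The Banach fixed point theorem then produces a unique $v \in B$ with $\Gamma(v) = v$; it lies in $X^1(I) \cap C(I; H^1(\R^d))$ by \eqref{incl}, and the stated bounds are exactly those defining $B$. Unconditional uniqueness in $X^1(I) \cap C(I; H^1(\R^d))$ follows in the usual way: by continuity of $\tau \mapsto \|v - z_0\|_{X^1([t_0, t_0 + \tau])}$ at $\tau = 0$, any solution coincides with the constructed one on a short subinterval on which $\Gamma$ is a contraction, and a bootstrap argument on the (closed and open) set of times where the two solutions agree propagates this to all of $I$. Finally, Proposition~\ref{PROP:LWP3} follows from this lemma by taking $f = z^\o = S(t)\phi^\o$ and invoking Lemmas~\ref{LEM:prob1} and~\ref{LEM:prob11} to control $\|\phi^\o\|_{H^s}$ and $\|S(t)\phi^\o\|_{S^s(I)}$.

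\textbf{Main obstacle.} The argument is routine once the function spaces and Corollary~\ref{COR:nonlin2} are in place; the only delicate point is the bookkeeping for the factor $\|v\|_{Y^1(I)}$ appearing in \eqref{Y1}, which — unlike $\|v\|_{L^{q_d}_t W^{1,r_d}_x}$ — does not vanish as $|I| \to 0$ because of the nonzero datum $v_0$. One must check that this factor always occurs multiplied by enough powers of the small quantities $|I|^\theta$ and $\|v\|_{L^{q_d}_t W^{1,r_d}_x} \lesssim \eta$ (this is precisely why the smallness hypothesis is imposed as $|I| \leq \eta^{2/\theta}$ rather than $\eta^{1/\theta}$), and one must fix the constants in the right order: the radii of $B$ first, then $\eta_0$ depending on $\|v_0\|_{H^1}$ and $M$.
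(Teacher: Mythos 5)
Your argument is correct and follows essentially the same contraction-mapping scheme that the paper uses (indeed, the paper defers to Proposition~6.3 of \cite{BOP2} and merely states that Corollary~\ref{COR:nonlin2} with the choice $\eta_0 \ll \wt R^{-(d+2)/(d-2)}$, $\wt R = \max(\|v_0\|_{H^1}, M)$, makes $\Gamma$ a contraction). The only cosmetic difference is the choice of ball: the paper contracts on $\{\|v\|_{X^1(I)} \le 2\wt R,\ \|v\|_{L^{q_d}_t W^{1,r_d}_x} \le 2\eta\}$ whereas you contract on $\{\|v - z_0\|_{X^1(I)} \le \eta,\ \|v\|_{L^{q_d}_t W^{1,r_d}_x} \le 2\eta\}$; both are closed in $X^1(I)$, both control $\|v\|_{Y^1(I)}$ by $\|v_0\|_{H^1} + O(1)$, and your choice reads off the asserted bound $\|v - S(t-t_0)v_0\|_{X^1(I)} \lesssim \eta$ directly. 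Your bookkeeping of the $\|v\|_{Y^1(I)}$ factor in \eqref{Y1} (showing it is always accompanied by $|I|^\theta$ and enough powers of the small quantity $\|v\|_{L^{q_d}_t W^{1,r_d}_x}$, and using $\tfrac{d+2}{d-2} \ge 2$ and $2 + \tfrac{6-d}{d-2} \ge 2$ for $d = 5,6$) is the same delicate point the paper leaves implicit, and you handle it correctly.
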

	
\begin{proof}
As mentioned above, one can  prove Lemma \ref{LEM:LWP4} 
by following the proof of Proposition~6.3 in \cite{BOP2}.
More precisely,  
by applying 
Corollary  \ref{COR:nonlin2}
and choosing
\begin{align*}
\eta_0  \ll  \wt R^{-\frac{d+2}{d-2}} 
\end{align*}

\noi
with  $\wt R :=\max(\| v_0\|_{H^1} , M)$,
a straightforward computation shows that 
 the map $\G$ defined by
\begin{equation*}
\G v(t) := S(t-t_0) v_0 -i  \int_{t_0}^t S(t -t') \N(v+f)(t') dt'
\end{equation*}

\noi is a contraction on
\[ B_{R, M,  \eta} = \big\{ v \in X^1(I)\cap C(I; H^1):
 \, \|v\|_{X^1(I)}\leq 2\wt R,
\ \|v\|_{L^{q_d}_t(I;W_x^{1, r_d})} \leq 2\eta\big\},\]

\noi
provided that  $f \in \wt E_M(I)$.
\end{proof}

\medskip

Lastly, note that Lemma \ref{LEM:LWP4}
yields the following blowup alternative.
Suppose that there exists $M(t)$
such that  $f \in \wt E_{M(t)}([0, t))$ for each $t > 0$.
Then, 
given $v_0 \in H^1(\R^d)$, 
let $v$ be the solution to the perturbed NLS \eqref{NLS5a}
with $v|_{t = 0} = v_0$ on a forward maximal time interval
$[0, T^*)$ of existence.
Then, either $T^* = \infty$
or 
\begin{align}
\lim_{T\to T^*} \| v \|_{L^{q_d}_t([0, T);W_x^{1, r_d})} = \infty.
\label{Y3}
\end{align}

\noi
In view of Lemma \ref{LEM:LWP4}, 
this blowup alternative 
follows from a standard argument as in 
 \cite{Cazenave}.
In fact, suppose $T^* < \infty$ and 
\begin{align*}
A^*  := \lim_{T\to T^*} \| v \|_{L^{q_d}_t([0, T);W_x^{1, r_d})} < \infty.
\end{align*}

\noi
Then, we will derive a contradiction in the following.

Without loss of generality, assume that $M(t)$ is non-decreasing
and set 
\begin{align}
M^* := \sup_{t \in [0, T^*+1]} M(t) < \infty.
\label{Y5}
\end{align}

\noi
Partition the interval $[0, T^*]$ as
\[ [0, T^*] = \bigcup_{j = 0}^{J} I_j \cap [0, T^*]\]

\noi
where
$ I_j = [t_j , t_{j+1}]$ with  $ t_0 = 0$ and  $t_{J+1} = T^*$.
From \eqref{Y1} in Corollary \ref{COR:nonlin2} with Lemma~\ref{LEM:lin}, we have
\begin{align*}
\| v \|_{X^1(I_j)} 
& \leq \|v(t_j)\|_{H^1} + \| \mathcal{N} (v+z)\|_{N^1(I_j)}\notag\\
&  \le  \|v(t_j)\|_{H^1}
+  C(T^*, A^*, M^*) 
+ |I_j|^{\theta} M^* (A^*)^{\frac{6-d}{d-2}} \| v \|_{X^1(I_j)}.
\end{align*}

\noi
Hence by  imposing 
that the lengths of the subintervals $I_j$
are sufficiently small, depending only on $A^*$ and $M^*$, 
we obtain 
\begin{align}
\sup_{t \in I_j} \| v(t)\|_{H^1} \les \| v \|_{X^1(I_j)} 
&  \les  \|v(t_j)\|_{H^1}
+  C(T^*, A^*, M^*) ,
\label{Y6}
\end{align}

\noi
where the implicit constants are independent of $j = 0, 1, \dots, J$.
By iteratively applying the estimate \eqref{Y6}, we obtain
\begin{align}
R^* : = \sup_{t \in [0, T^*]} \|v(t) \|_{H^1} \leq   C(T^*, A^*, M^*)  < \infty.
\label{Y7}
\end{align}

\noi
Then, combining \eqref{Y6} and \eqref{Y7}, we obtain
\begin{align}
 \| v \|_{X^1(I_j)} 
&  \leq   C(T^*, A^*, M^*)  < \infty
\label{Y8}
\end{align}

\noi
uniformly in $j = 0, 1, \dots, J$.

Given $\wt \eta > 0$ (to be chosen later),  
we refine the partition and assume that 
\begin{align}
 \| v \|_{L^{q_d}_t(I_j;W_x^{1, r_d})} < \wt \eta.
\label{Y9}
\end{align}

\noi
Fix  $\eta_0 = \eta_0(R^*, M^*)>0$, where $\eta_0$ is as in   Lemma \ref{LEM:LWP4}
and $R^*$ and $M^*$ are as in \eqref{Y7} and~\eqref{Y5}.
Then, by taking the $L^{q_d}_t(I_j;W_x^{1, r_d})$-norm of the Duhamel formulation:
\[S(t-t_j) v(t_j) = v(t) +i \int_{t_j}^t S(t - t')\N(v+f) dt', \]

\noi
applying Corollary \ref{COR:nonlin2} with 
\eqref{Y8} and 
the smallness condition \eqref{Y9}, 
and taking $\wt \eta  = \wt\eta(\eta_0) = \wt \eta (R^*, M^*)> 0$ 
and $|I_j| = |I_j|(T^*, A^*,  M^*, \eta_0)$ sufficiently small, 
we have
\begin{align*}
\|S(t-t_j) v(t_j) \|_{L^{q_d}_t(I_j;W_x^{1, r_d})}
&\leq \wt \eta  + C 
\wt \eta^\frac{d+2}{d-2} + C(T^*, A^*, M^*) |I_j|^\theta \notag\\
& \leq \tfrac 12 \eta_0.
\end{align*}
	
\noi
In particular, with $j = J$, this implies that  there exists some $\eps > 0$ such that 
\begin{align*}
\|S(t-t_J) v(t_J) \|_{L^{q_d}_t([t_J, T^* + \eps];W_x^{1, r_d})}
& \leq  \eta_0.
\end{align*}

\noi
By further imposing that $|I_J|\leq \frac 12 \eta_0^\frac{2}{\theta}$,
we conclude from Lemma \ref{LEM:LWP4}
that the solution $v$ can be extended to $[0, T^* + \eps]$
for some $\eps > 0$,
which is a contradiction to the assumption $T^* < \infty$.
Therefore, if $T^* < \infty$, then we must have \eqref{Y3}.

\begin{remark}\label{REM:X^1}\rm
Suppose $T^* < \infty$.
Then, it follows from the argument above with Lemma~\ref{LEM:LWP4} and the subadditivity of the $X^1$-norm over disjoint intervals
(Lemma A.4 in \cite{BOP2}) that 
$v \in X^1([0, T^* - \dl))$
for any $\dl > 0$.
If $T^* = \infty$, we have 
$v \in X^1([0, T))$ for any finite $T > 0$.

\end{remark}

\section{Almost sure global well-posedness
of the defocusing energy-critical  NLS below the energy space}
\label{SEC:energy}

 In this section, we present the proof of Theorem \ref{THM:GWP}.
Namely, we prove almost sure global well-posedness
of the defocusing  energy-critical  NLS on $\R^d$, $d = 5, 6$:
\begin{equation}
\label{XNLS}
\begin{cases}
 i \dt u + \Dl  u =  |u|^\frac{4}{d-2} u,  \\
 u|_{t = 0} = \phi^\o,
\end{cases}
\qquad (t, x) \in \R\times \R^d.
\end{equation}

\noi
where
$\phi^\o$ is the  Wiener randomization 
of a given function $\phi \in H^s (\R ^d)$
for some $s < 1$.
As in Section \ref{SEC:LWP2}, 
we  consider the following Cauchy problem
for the defocusing NLS with a deterministic perturbation:
\begin{equation}
\begin{cases}
	 i \dt v + \Dl v =  |v + f|^\frac{4}{d-2}(v+f)\\
v|_{t = 0} = 0.
 \end{cases}
\label{ZNLS1}
\end{equation}

\noi
Under a suitable regularity assumption on $f$, 
Lemma \ref{LEM:LWP4} guarantees local existence of solutions
to \eqref{ZNLS1}.
In the following, we assume 
\begin{itemize}
\item[(i)]
$f$ is a linear solution $f  = S(t)\psi$
for some deterministic initial condition $\psi$, 
\item[(ii)] $f$ satisfies certain space-time integrability conditions.
\end{itemize}

\noi
Under these assumptions, 
we first establish crucial energy estimates (Proposition \ref{PROP:energy}
for $d = 6$ and Proposition \ref{PROP:energy2} for $d = 5$) 
for a solution $v$ to the perturbed
NLS \eqref{ZNLS1}.
This is the main new ingredient in this paper
as compared to \cite{BOP2}.
Once we have these energy estimates, 
we can proceed as in \cite{BOP2}
and hence we only sketch the argument.
Fix an interval $[0, T)$.
Given  $t_0 \in [0, T)$, we  iteratively apply the perturbation lemma (Lemma \ref{LEM:perturb})
on short time intervals $I_j = [t_j, t_{j+1}]$
and approximate a solution $v$ to the perturbed NLS \eqref{ZNLS1}
by the global  solution $w$ to the original NLS~\eqref{XNLS} with $w|_{t = t_0} = v(t_0)$.
This allows us to 
 show that the solution $v$ to the perturbed NLS \eqref{ZNLS1} 
exists on $[t_0, t_0 + \tau]$, where $\tau$ is independent of $t_0 \in [0, T)$
(Proposition~\ref{PROP:perturb2}).
By iterating this ``good'' local well-posedness, 
we can extend the solution $v$ to the entire interval $[0, T]$.
Since the choice of $T > 0$ was arbitrary, this shows that 
the perturbed NLS \eqref{ZNLS1} 
is globally well-posed.
In Subsection~\ref{SUBSEC:GWP}, 
we verify that 
the conditions imposed on $f$ for long time existence are satisfied with a large probability
by setting  $f(t) = z(t)  = S(t) \phi^\o$.
This yields Theorem \ref{THM:GWP}.

\subsection{Energy estimate for the perturbed NLS}

First, we discuss the following a priori\footnote{In Lemma \ref{LEM:mass} 
and Propositions \ref{PROP:energy} and \ref{PROP:energy2}, we prove 
a priori estimates
for a smooth solution $v$ with smooth $\psi$ and hence $f$.
By the standard argument via the local theory, 
one can show that these a priori estimates also hold for rough solutions as long as they exist.} control on the mass.

\begin{lemma}\label{LEM:mass}
Let $v$ be a solution
to \eqref{ZNLS1} with $f = S(t) \psi$.  Then, we have
\begin{align}
\int | v(t)|^2 dx \les \int | \psi |^2 dx,
\label{m1}
\end{align}

\noi
where the implicit constant is independent of $t \in \R$.
\end{lemma}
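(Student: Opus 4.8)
The plan is to undo the decomposition and recognize that \eqref{ZNLS1} is just the original gauge-invariant NLS in disguise. Since $f = S(t)\psi$ solves the linear equation $i\dt f + \Dl f = 0$ and $v$ solves \eqref{ZNLS1}, the sum $u := v + f$ solves
\[
i\dt u + \Dl u = |u|^{\frac{4}{d-2}} u, \qquad u|_{t=0} = \psi,
\]
which is precisely the equation \eqref{XNLS} with (deterministic) initial data $\psi$. I would establish this identity first, as it is the whole point.

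Second, I would invoke conservation of mass for $u$. In the smooth a priori setting in which the estimate is claimed (cf.\ the footnote preceding Lemma \ref{LEM:mass}), $u$ is a smooth solution, so pairing the equation with $\cj u$, integrating over $\R^d$, and taking imaginary parts gives $\dt \int |u(t)|^2 dx = 0$: the Laplacian term contributes nothing after an integration by parts since $\int |\nb u|^2 dx$ is real, and the nonlinear term contributes $2\,\Im \int |u|^{\frac{2d}{d-2}} dx = 0$ since its integrand is real. Hence $\int |u(t)|^2 dx = \int |\psi|^2 dx$ throughout the interval of existence of $v$.

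Third, since $S(t)$ is unitary on $L^2(\R^d)$, we also have $\int |f(t)|^2 dx = \int |S(t)\psi|^2 dx = \int |\psi|^2 dx$. Writing $v = u - f$ and applying the triangle inequality in $L^2$,
\[
\Big(\int |v(t)|^2 dx\Big)^{\frac12} \le \Big(\int |u(t)|^2 dx\Big)^{\frac12} + \Big(\int |f(t)|^2 dx\Big)^{\frac12} = 2\Big(\int |\psi|^2 dx\Big)^{\frac12},
\]
which squares to \eqref{m1} with implicit constant $4$, uniformly in $t \in \R$.

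There is no genuine obstacle here; the only point requiring care is the legitimacy of the mass-conservation computation, which is precisely why the statement is phrased as an a priori bound for smooth solutions. It is worth emphasizing, though, why the reformulation via $u = v + f$ is essential: a direct computation yields $\dt \int |v|^2 dx = -2\,\Im \int \cj f\, |v+f|^{\frac{4}{d-2}}(v+f)\, dx$, whose right-hand side contains super-quadratic powers of $v$ and therefore cannot be closed by a Gr\"onwall argument for $\|v(t)\|_{L^2}$ alone. Passing to $u$ dissolves this difficulty since the cross term is exactly what is controlled by the conserved masses of $u$ and $f$.
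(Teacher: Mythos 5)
Your proof is correct and follows essentially the same route as the paper: both rest on the observation that $u=v+f$ solves \eqref{XNLS} with data $\psi$, then combine conservation of the mass of $u$ with unitarity of $S(t)$. The only cosmetic difference is that you use the $L^2$ triangle inequality on $v=u-f$, whereas the paper expands $\int|v+f|^2$ and applies Young's inequality to the cross term; the underlying content is identical.
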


\begin{proof}
Note that $u = v + f$  satisfies \eqref{XNLS}.
Hence, by the mass conservation for \eqref{XNLS}, we have
\begin{align*}
\int |\psi|^2 dx  = \int | v(t) + f(t)|^2 dx = 
\int | v(t)|^2 dx + 2\Re \int v(t) \cj {f(t)} dx + \int  |f(t)|^2 dx.
\end{align*}

\noi
By the unitarity of the linear solution operator, we obtain 
\begin{align*}
\int | v(t)|^2 dx = -  2\Re \int v(t) \cj {f(t)} dx 
\leq \frac 12 \int | v(t)|^2 dx + 2\int | f(t)|^2 dx.
\end{align*}

\noi
By invoking the unitarity of the linear solution operator once again, 
we obtain \eqref{m1}.
\end{proof}

Next, we establish an energy estimate
when $d = 6$.
Recall the following conserved energy for NLS \eqref{XNLS}:
\begin{align*}
E(u) = \frac 12 \int |\nb u |^2 dx + \frac 13 \int |u|^3 dx .
\end{align*}

\noi
In the following, we estimate the growth of $E(v)$
for a solution $v$ to the perturbed NLS~\eqref{ZNLS1}.

\begin{proposition} \label{PROP:energy}
Let $d = 6$ and $s> \frac{20}{23}$.
Then, the  following energy estimate
holds
for a solution $v$ to the perturbed NLS \eqref{ZNLS1} with $f = S(t) \psi$:
\begin{align}
\dt E(v)(t) 
&\les 
\big(1+\|f(t)\|_{L^\infty_x}\big)E(v)(t)
+  \|f(t) \|_{L^6_x}^6\notag\\
& \hphantom{X}
+ \| f(t) \nb f(t)\|_{L^2_x}^2
+ \| v(t) \nb f(t)\|_{L^2_x}^2.
\label{energy1}
\end{align}

\noi
In particular, 
given $T > 0$, we have
\begin{align}
\sup_{t \in [0, T]} E(v)(t) 
\leq 
C\big(T, \|f \|_{A^s(T)}\big)
\label{energy2}
\end{align}

\noi
for any solution $v \in C([0, T]; H^1(\R^6))$ to the perturbed NLS \eqref{ZNLS1}
 with $f = S(t) \psi$, 
 where the $A^s(T)$-norm is defined by 
\begin{align*}
\|f \|_{A^s(T)}
:= \max\Big(
 \|\jb{\nb}^{s-} f \|_{L^\infty_{T, x}}, 
 \|f \|_{L^6_{T, x}}, 
\| f \|_{L^4_T W^{s, 4}_x}, 
\|   f \|_{L^4_T L^3_{x}}, 
 \|\psi\|_{L^2_x}, 
\|f\|_{Y^s_T}\Big).
\end{align*}

\end{proposition}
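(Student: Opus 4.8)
The plan is to establish the pointwise-in-time differential inequality \eqref{energy1} by a direct energy computation, and then to integrate it and close a Gronwall argument, the delicate point being to bound the term $\|v(t)\nb f(t)\|_{L^2_x}^2$ by at most one power of the energy. For the first part, recall that for $d=6$ we have $\N(w)=|w|w$ and $E(v)=\frac12\|\nb v\|_{L^2_x}^2+\frac13\|v\|_{L^3_x}^3$, so $\frac{\delta E}{\delta\bar v}=-\Delta v+|v|v$. Using that $v$ solves \eqref{ZNLS1}, i.e.\ $i\dt v=-\Delta v+|v+f|(v+f)$, a routine computation (the $|\Delta v|^2$ and $|\N(v)|^2$ pieces being real and dropping out) gives
\[
\dt E(v)=\Re\, i\int\big(\N(v+f)-\N(v)\big)\,\Delta\bar v\,dx-\Re\, i\int\N(v+f)\,\overline{\N(v)}\,dx=:\1+\II,
\]
exactly as in \eqref{intro1} with $z$ replaced by $f$.

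Next I would estimate $\1$ and $\II$ pointwise in $t$. For $\1$, integrating by parts and using the identity $\nb(|w|w)=\tfrac32|w|\nb w+\tfrac{w^2}{2|w|}\nb\bar w$ yields $|\nb(\N(v+f)-\N(v))|\les|f|\,|\nb v|+(|v|+|f|)\,|\nb f|$; pairing with $\nb\bar v$ and applying Young's inequality produces the Gronwall‑friendly kinetic term $\|f\|_{L^\infty_x}\|\nb v\|_{L^2_x}^2\les\|f\|_{L^\infty_x}E(v)$, the main term $\|v\nb f\|_{L^2_x}^2$ flagged in \eqref{intro2}, and the term $\|f\nb f\|_{L^2_x}^2$. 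For $\II$, writing $(v+f)\bar v=|v|^2+f\bar v$ kills the real part, leaving $|\II|\les\int|v|^2(|v|+|f|)|f|\,dx$, and then $\int|v|^3|f|\le\|f\|_{L^\infty_x}\|v\|_{L^3_x}^3\les\|f\|_{L^\infty_x}E(v)$ while $\int|v|^2|f|^2\le\|v\|_{L^3_x}^2\|f\|_{L^6_x}^2\les E(v)+\|f\|_{L^6_x}^6$ by H\"older and Young. Collecting these bounds gives \eqref{energy1}.

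To obtain \eqref{energy2} I would integrate \eqref{energy1} over $[0,t]\subset[0,T]$. The coefficient $1+\|f(\tau)\|_{L^\infty_x}$ lies in $L^1_\tau([0,T])$ (controlled by $\|f\|_{A^s(T)}$ through the $\jb{\nb}^{s-}L^\infty_{T,x}$ component), and $\int_0^T\|f\|_{L^6_x}^6\,d\tau=\|f\|_{L^6_{T,x}}^6$ is likewise bounded by $\|f\|_{A^s(T)}$; the remaining purely deterministic quantity $\int_0^T\|f\nb f\|_{L^2_x}^2\,d\tau$ is bounded in terms of $\|f\|_{A^s(T)}$ using the various space‑time norms it collects together with the bilinear refinement of the Strichartz estimate (Lemma \ref{LEM:biStv}). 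The essential new step is the control of $\int_0^T\|v\nb f\|_{L^2_x}^2\,d\tau$; following the sketch after \eqref{intro2}, I would split $\||v|\,\nb f\|_{L^2([0,T]\times\R^6)}^2$ into a $\theta$‑power and a $(1-\theta)$‑power, insert the Duhamel formula $v=-i\I[\N(v+f)]$ into the $\theta$‑factor, apply Lemma \ref{LEM:biStv} together with Lemma \ref{LEM:Stv}, and bound the resulting higher‑order‑in‑$v$ expression—discarding the gradient on $v$—by the potential part $\|v\|_{L^3_x}^3$ of the energy, while the $(1-\theta)$‑factor is used to absorb the gradient on $v$ and is bounded by the kinetic part $\|\nb v\|_{L^2_x}^2$ and the mass bound of Lemma \ref{LEM:mass}. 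Inserting these estimates into the integrated inequality and invoking Gronwall yields $\sup_{[0,T]}E(v)\le C(T,\|f\|_{A^s(T)})$; the threshold $s>\tfrac{20}{23}$ is precisely what makes the relevant dyadic sums in the bilinear estimates converge given the available regularity of $f$ and of the Duhamel term of $v$.

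The main obstacle is this last step: extracting from $\int_0^T\|v\nb f\|_{L^2_x}^2\,d\tau$ a bound that is at most linear in $E(v)$ modulo a Gronwall‑acceptable remainder. The difficulty is structural—the kinetic part of $E(v)$ controls $\nb v$ but is only quadratic in $v$, whereas the potential part has higher homogeneity but controls no derivative—and it is exactly the Duhamel substitution together with the bilinear Strichartz estimate that allows one to trade the gradient on $v$ against one of the remaining factors and thereby pass to the potential energy. Balancing the resulting powers of $T$, of $\|f\|_{A^s(T)}$, and of $E(v)$ so that the Gronwall inequality closes is the heart of the argument and the source of the regularity restriction.
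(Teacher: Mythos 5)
Your proposal follows essentially the same route as the paper's proof: the pointwise energy identity $\dt E(v)=\1+\II$, the elimination of the real contribution in $\II$ and Young's inequality to obtain \eqref{energy1}, and then integration in time with a Littlewood--Paley decomposition, the bilinear refinement of Strichartz (Lemma \ref{LEM:biStv}), and a $\theta$-power Duhamel substitution to control $\|v\nabla f\|_{L^2_{\tau,x}}^2$ by at most $\sup_{[0,\tau]}E(v)^{1-}$ times data-dependent constants, with the regularity threshold $s>\tfrac{20}{23}$ arising from the same optimization over $\theta$ and the high/low frequency split parameter $\gamma$. The only minor presentational deviations are that the paper carries out the decomposition of $\nabla\mathcal N(v+f)-\nabla\mathcal N(v)$ via an explicit fundamental-theorem-of-calculus identity \eqref{diff1} with the auxiliary functions $G,H$ rather than the pointwise Lipschitz bound you quote, and the final step from $\sup E\lesssim 1+\sup E^{1-}$ to \eqref{energy2} is a continuity argument rather than Gronwall's lemma.
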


\begin{proof}

We first prove \eqref{energy1}.	
Since we work for fixed $t$, we suppress the $t$-dependence in the following.
Noting that $\dt (|v|^3) = 3|v| \Re(\cj v \dt v)$, we have 
\begin{align}
\dt E(v) 
& =  \underbrace{- \Re i \int \Dl v \Dl \cj v dx}_{ = 0}
+ \Re i \int |v + f| (v+ f) \Dl \cj vdx \notag \\
& \hphantom{X} +  \Re i \int \Dl v |v| \cj vdx
- \Re i \int |v+f|(v+f) |v|\cj vdx\notag \\
& =   \Re i \int \big\{|v + f| (v+ f) - |v| v\big\} \Dl \cj vdx
- \Re i \int |v+f|(v+f) |v|\cj vdx\notag \\
& =: \1 + \II.
\label{OO1}
\end{align}

\noi
By Young's inequality, we have
\begin{align}
\II 
& = - \underbrace{\Re i \int |v+f| |v|^3 dx}_{ = 0}
 - \Re i \int |v+f|\cdot f \cdot |v|\cj vdx\notag \\
&  \les (1 +  \|f\|_{L^\infty_{x}}) \int |v|^3 dx
 + \|f\|_{L^6_{x}}^6 \notag \\
& \les
(1 + \|f\|_{L^\infty_{x}}) E(v)
+ \|f\|_{L^6_{x}}^6.
\label{OO2}
\end{align}

Integrating by parts, we have 
\begin{align}
\1  =   - \Re i \int \nb \big\{|v + f| (v+ f) - |v| v\big\} \cdot \nb \cj vdx.
\label{OO2a}
\end{align}

\noi
Then, from \eqref{nonlin5},  \eqref{diff4}, 
and  \eqref{diff5}, 
we have
\begin{align}
\nabla \N(v + f) &  - \nb \N(v)\notag\\
& = \Re\big\{G(v + f)\nb (\cj{v+  f})
- G( v)\nb \cj{v}\big\}
+ H(v + f)\nb (v + f)
- H( v)\nb  v\notag\\
& = \Re\big\{G(v + f)\nb \cj f\big\}
+ \Re\big\{(G(v + f) - G(v)) \nb \cj v\big\}\notag\\
& \hphantom{X}
+ H(v + f)\nb f + 
(H(v+f) - H( v)) \nb  v
\label{OO3},
\end{align}

\noi
where $G(\zeta) = \frac{\zeta^2}{|\zeta|}$ and $H(\zeta) = |\zeta|$ are as in 
\eqref{diff4} and  \eqref{diff5} (with $d = 6$), respectively.
Let us denote by $\1_j$, $j = 1, \dots, 4$,  the contribution to $\1$ in \eqref{OO2a}
from the $j$th term on the right-hand side of \eqref{OO3}.

Proceeding as in \eqref{diff1}, we have 
\begin{align*}
G(v+f) - G(v) 
& = \int_0^1 \dd_\zeta G(v   + \theta f)\cdot f
 + \dd_{\cj \zeta}G(v + \theta f)\cdot \cj f  d\theta,\\
 H(v+f) - H(v) 
& = \int_0^1 \dd_\zeta H(v   + \theta f)\cdot f
 + \dd_{\cj \zeta}H(v + \theta f)\cdot \cj f  d\theta.
\end{align*}

\noi
Then, it follows from  \eqref{diff4} and  \eqref{diff5} that 
\begin{align}
\| G(v+f) - G(v) \|_{L^\infty_x} + 
\| H(v+f) - H(v) \|_{L^\infty_x}
\les \|f\|_{L^\infty_x}.
\label{OO3a}
\end{align}

\noi
Hence, from \eqref{OO2a}, \eqref{OO3},  and \eqref{OO3a}, we have
\begin{align}
| \1_2 + \1_4 |
\les \|f\|_{L^\infty_{x}} \|\nb v   \|_{L^2_x}^2
\les \|f\|_{L^\infty_{x}}  E(v).
\label{OO4}
\end{align}

\noi
Note that $|G(\zeta)| = |H(\zeta)|  = |\zeta|$.
Then, integrating by parts (in $x$), we have
\begin{align}
| \1_1 +\1_3 |
& \les   \|\nb v\|_{L^2_x}^2 
+  \|(v+f) \nb  f \|_{L^2_x}^2  \notag\\
& \les  E(v)
+   \|f \nb f  \|_{L^2_x}^2 
+\|v \nb  f \|_{L^2_x}^2.
\label{OO5}
\end{align}

\noi
Hence, \eqref{energy1} follows from 
\eqref{OO1}, \eqref{OO2}, \eqref{OO4}, and \eqref{OO5}.

Next, we discuss the second estimate \eqref{energy2}.
By solving the differential inequality \eqref{energy1} 
with $v|_{t = 0} = 0$
in a crude manner, we obtain
\begin{align}
 E(v)(\tau) 
 & \leq  
C \int_0^\tau e^{C(1+\|f\|_{L^\infty_{T, x}}) (\tau - t)}
\Big\{ 
 \|f (t)\|_{L^6_{x}}^6
+ \| f(t) \nb f(t)\|_{L^2_{x}}^2
+ \| v(t) \nb f(t)\|_{L^2_{x}}^2\Big\}dt\notag\\
 &  \leq 
C e^{C(1+\|f\|_{L^\infty_{T, x}}) T}
\Big\{  \|f \|_{L^6_{\tau, x}}^6
+ \| f \nb f\|_{L^2_{\tau, x}}^2
+ \| v \nb f\|_{L^2_{\tau, x}}^2\Big\}
\label{OO5a}
\end{align}

\noi
for any $\tau \in [0, T]$.
The estimate \eqref{OO5a} is by no means sharp.
It, however,  suffices for our purpose.

We can estimate $\| f \nb f\|_{L^2_{\tau, x}}$ as in the proof of Proposition \ref{PROP:nonlin1}.
Namely, by writing
\begin{align}
 \| f \nb f\|_{L^2_{\tau, x}}
\leq \sum_{N_1, N_2 \in 2^{\NB_0} } N_2
\| \P_{N_1}  f \P_{N_2} f\|_{L^2_{\tau, x}},
\label{OO5c}
\end{align}

\noi
we separate  the estimate
into two cases (i) $N_1 \ges N_2^\frac{1}{5}$
and (ii) $N_1 \ll N_2^\frac{1}{5}$.
Then, 
we can estimate the contribution from (i) by 
$ \| f \|_{L^4_\tau W^{s, 4}_x}^2$ for $ s > \frac 56$,
while we can apply 
Lemma \ref{LEM:biStv}
and estimate 
the contribution from (ii) 
by $\|f\|_{Y^s_\tau }^2$ for $ s > \frac 56$.
Hence, we obtain
\begin{align}
 \| f \nb f\|_{L^2_{\tau, x}}^2
\les  \| f \|_{L^4_\tau W^{s, 4}_x}^4
+ \|f\|_{Y^s_\tau}^4,
\label{OO5b}
\end{align}

\noi
provided that $s  > \frac 56$.

Next, we consider $\| v \nb f\|_{L^2_\tau L^2_x}$.
By writing
\[ \| v \nb f\|_{L^2_{\tau, x}}
\leq \sum_{N_1, N_2 \in 2^{\NB_0} } N_2
\| \P_{N_1}  v \P_{N_2} f\|_{L^2_{\tau, x}},
\]

\noi
we divide the argument into the following two cases:
\[ \text{(i)}\ N_1 \ges N_2^\g\qquad 
\text{and} \qquad 
\text{(ii)}\ 
N_1 \ll N_2^\g\]

\noi
for some $\g > 0$ (to be chosen later).
We first estimate 
the contribution from (i) $N_1 \ges N_2^\g$.
By interpolation and Lemma \ref{LEM:mass}, we have 
\begin{align}
 \sum_{\substack{N_1, N_2 \in 2^{\NB_0}\\N_1 \ges N_2^\g}} N_2
& \| \P_{N_1}  v \P_{N_2} f\|_{L^2_{\tau, x}}
 \les  \sum_{\substack{N_1, N_2 \in 2^{\NB_0}\\N_1 \ges N_2^\g}} 
 N_1^{1-} N_2^{1-\g+}
\| \P_{N_1}  v \P_{N_2} f\|_{L^2_{\tau, x}}\notag \\
& \les 
 \sum_{\substack{N_1, N_2 \in 2^{\NB_0}\\N_1 \ges N_2^\g}} 
\| \P_{N_1} \jb{\nb}^{1-} v \P_{N_2} \jb{\nb}^{s-} f\|_{L^2_{\tau, x}}\notag\\
&  
\leq C(T)  \Big\{ \sup_{t \in [0, \tau]} \big(E(v)(t)\big)^{\frac{1}{2}-}
\|  \psi \|_{ L^2_x}^{0+}
  + \| \psi\|_{L^2_x} \Big\}
\|  \jb{\nb}^{s-} f\|_{L^\infty_{\tau, x}}
\label{OO6}
\end{align}

\noi
for any $\tau \in [0, T]$, 
provided that
\begin{align}
s > 1 -\g.
\label{OO6a}
\end{align}

We now turn our attention to (ii) $N_1 \ll N_2^\g$.
Recall that $(q, r) = (2, 3)$ is admissible.
Hence, by Lemma \ref{LEM:biStv},  the Duhamel formula (with $v|_{t = 0} = 0$),
the linear estimate (Lemma~\ref{LEM:lin}) and 
the Strichartz estimates (Lemma \ref{LEM:Stv}), we have
\begin{align}
\| \P_{N_1}  v \P_{N_2} f\|_{L^2_{\tau, x}}
& \les N_1^{\frac{5}{2}-}N_2^{-\frac{1}{2}+}
\| \P_{N_1}  v \|_{Y^0_\tau}\|\P_{N_2} f\|_{Y^0_\tau }\notag\\
& \les N_1^{\frac{5}{2}-}N_2^{-\frac{1}{2}+}
\bigg\| \P_{N_1} \int_0^tS(t - t') |v+f|(v+f)(t') dt' \bigg\|_{Y^0_\tau}\|\P_{N_2} f\|_{Y^0_\tau}\notag\\
& \les N_1^{\frac{5}{2}-}N_2^{-\frac{1}{2}+}
\big(\|   v \|_{L^4_\tau L^3_{x}}^2
+ \|   f \|_{L^4_\tau L^3_{x}}^2\big)
\|\P_{N_2} f\|_{Y^0_\tau}
\label{OO7}
\end{align}

\noi
Fix  $\theta \in (0, 1)$ (to be chosen later).
We  apply \eqref{OO7} only to the $\theta$-power of the factor in 
\begin{align*}
 \sum_{\substack{N_1, N_2 \in 2^{\NB_0}\\N_1\ll N_2^\g}} N_2
\| \P_{N_1}  v \P_{N_2} f\|_{L^2_{\tau, x}}.
\end{align*}

\noi
Then, 
with \eqref{OO7}, we have
\begin{align*}
&  \sum_{\substack{N_1, N_2 \in 2^{\NB_0}\\N_1\ll N_2^\g}} 
 N_2 \| \P_{N_1}   v \P_{N_2} f\|_{L^2_{\tau, x}} \notag\\
&  \hphantom{XX}
\les 
 \sum_{\substack{N_1, N_2 \in 2^{\NB_0}\\N_1\ll N_2^\g}} 
N_2^{1 - \frac{\theta}{2}+}
\big(\|   v \|_{L^4_\tau L^3_{x}}^2
+ \|   f \|_{L^4_\tau L^3_{x}}^2\big)^\theta
\|\P_{N_2} f\|_{Y^0_\tau}^\theta
\| N_1^{\frac52 \frac{\theta}{1-\theta}-}\P_{N_1}  v \P_{N_2} f\|_{L^2_{\tau, x}}^{1-\theta}\notag\\
\intertext{By interpolation,}
&  \hphantom{XX}
 \les  \sum_{\substack{N_1, N_2 \in 2^{\NB_0}\\N_1\ll N_2^\g}} 
\big(\|   v \|_{L^4_\tau L^3_{x}}^2
+ \|   f \|_{L^4_\tau L^3_{x}}^2\big)^\theta
\|\P_{N_2} f\|_{Y^s_\tau}^\theta
 \| \P_{N_1}  v\|_{ L^2_{\tau, x}}^{1 -\frac 72\theta +}
\notag\\
 & \hphantom{XXXXXXXXXXXXXX}
 \times 
\| \P_{N_1} \jb{\nb} v\|_{L^2_{\tau, x}}^{\frac52 \theta-}
\| \P_{N_2} \jb{\nb}^{s-} f\|_{L^\infty_{\tau, x}}^{1-\theta},
\end{align*}

\noi
provided that 
\begin{align}
1 - \frac{\theta}{2}< s.
\label{OO9}
\end{align}

\noi
Summing over $N_1$ and $N_2$
and applying Lemma \ref{LEM:mass}, we obtain 
\begin{align}
\| v \nb f\|_{L^2_{\tau, x}}^2
\leq C(T,  \| f\|_{A^s(T)}) 
\Big\{ 1+ \sup_{t \in [0, \tau]} \big(E(v) (t)\big)^{1-}\Big\}
\label{OO9a}
\end{align}

\noi
for any $\tau \in [0, T]$, 
provided that 
\begin{align}
\frac 43 \theta +  \frac52\theta  < 1.
\label{OO10}
\end{align}

\noi
Optimizing \eqref{OO6a}, 
\eqref{OO9}, and  \eqref{OO10}, 
we obtain 
\[ s > \frac{20}{23}\]

\noi
with $\theta = \frac 6{23} -$
and $\g = 1-s +$.

Finally, 
putting 
\eqref{OO5a}, \eqref{OO5b}, \eqref{OO6}, and \eqref{OO9a}
together with $v|_{ t= 0} = 0$, we obtain
\begin{align*}
\sup_{t \in [0, \tau]} E(v)(t) 
\leq 
C(T, \|f \|_{A^s(T)})
\Big\{ 1 + 
\sup_{t \in [0, \tau]} \big(E(v) (t)\big)^{1-}
\Big\}
\end{align*}

\noi
for any $\tau \in [0, T]$.
Then, \eqref{energy2}
follows from  the standard continuity argument.
\end{proof}

We conclude this subsection by 
establishing an energy estimate
when $d = 5$.
As mentioned in Section \ref{SEC:1}, 
we study the growth of
the following modified energy:
\begin{align*}
\E(v) = \frac 12 \int |\nb v |^2 dx + \frac 3{10} \int |v+f|^\frac{10}{3} dx 
\end{align*}

\noi
for a solution $v$ to the perturbed NLS~\eqref{ZNLS1}.

\begin{proposition} \label{PROP:energy2}
Let $d = 5$ and $s> \frac{63}{68}$.
Then, the  following energy estimate
holds:
given $T > 0$, we have
\begin{align}
\sup_{t \in [0, T]} \E(v)(t) 
\leq 
C\big(T, \|f \|_{B^s(T)}\big)
\label{EE1}
\end{align}

\noi
for any solution $v \in C([0, T]; H^1(\R^5))$ to the perturbed NLS \eqref{ZNLS1}
 with $f = S(t) \psi$, 
 where the $B^s(T)$-norm is defined by 
\begin{align*}
\|f \|_{B^s(T)}
:= \max_{\substack{p = \frac{5}{2}, 3, 4\\q = 2, \frac{10}{3}}}\Big(
 \|\jb{\nb}^{s-} f \|_{L^\infty_{T, x}}, 
 \|\jb{\nb}^{s} f \|_{L^p_{T, x}}, 
\| f \|_{L^\frac{14}{3}_T L^\frac{10}{3}_{x}},
 \|f\|_{L^\infty_T L^q_x}, 
\|f\|_{Y^s_T}\Big).
\end{align*}

\end{proposition}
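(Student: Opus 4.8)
The plan is to adapt the proof of Proposition~\ref{PROP:energy} to $d=5$, using the modified energy $\E$ in place of $E$ and reworking every multilinear exponent; as there, I would establish the a priori bound for smooth $\psi$ (hence smooth $v$) and let the local theory propagate it to rough solutions. First I would record a differential identity for $\E(v)$. Since $u:=v+f$ solves the original NLS~\eqref{XNLS}, whose energy $E(v+f)$ is conserved, and since $\E(v)=E(v+f)-\tfrac12\int\big(|\nb(v+f)|^2-|\nb v|^2\big)\,dx$, differentiating in $t$ and using $i\dt f+\Dl f=0$ together with~\eqref{ZNLS1} shows that all $v$-self-interactions cancel, leaving
\[\dt\E(v)=-\Re i\int|v+f|^\frac43(v+f)\,\Dl\cj f\,dx;\]
this is exactly the assertion (from the introduction) that $\E$ removes the term $\II$ of~\eqref{intro1} at the price of the single new term containing $\Dl f$, so that nothing analogous to the $\|f\|_{L^6}^6$-term of~\eqref{energy1} survives. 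Moreover, the purely linear part of the nonlinearity contributes $-\Re i\int|f|^\frac43 f\,\Dl\cj f\,dx=\dt\big(\tfrac3{10}\int|f|^\frac{10}3\,dx\big)$, a total time derivative; integrating over $[0,\tau]$ with $v|_{t=0}=0$ therefore gives
\[\E(v)(\tau)=\tfrac3{10}\int|f(\tau)|^\frac{10}3\,dx-\Re i\int_0^\tau\!\!\int\big\{|v+f|^\frac43(v+f)-|f|^\frac43 f\big\}\Dl\cj f\,dx\,dt,\]
where the bracket vanishes to first order in $v$, so the last integral carries at least one factor of $v$.

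Next I would reduce matters to a single critical quantity, following~\eqref{OO2a}--\eqref{OO5} and~\eqref{OO5c}--\eqref{OO5b} with $d=5$ exponents. Integrating by parts in $x$ in the last display, distributing $\nb$ by~\eqref{nonlin5}, discarding the purely real (hence, after $\Re i$, vanishing) contributions such as $|v+f|^\frac43|\nb f|^2$, and applying Young's inequality together with $\|\nb v\|_{L^2_x}^2\le2\E(v)$, one bounds $\E(v)(\tau)$ by a constant multiple of
\[\|f\|_{L^\infty_T L^{10/3}_x}^{10/3}+\int_0^\tau\E(v)(t)\,dt+\int_0^\tau\big\||v|^\frac43\nb f\big\|_{L^2_x}^2\,dt+(\text{lower-order and purely-}f\text{ terms}),\]
where $\int_0^\tau\||v|^\frac43\nb f\|_{L^2_x}^2\,dt=\int_0^\tau\!\!\int|v|^\frac83|\nb f|^2\,dx\,dt$ is precisely ``the second term on the right-hand side of~\eqref{intro2}''. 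The remaining terms (of the types $\int|v|^2|\nb f|^2$ and $\int|f|^{\alpha}|\nb f|^2$, $\alpha\in\{\tfrac23,\tfrac43,\tfrac83\}$, each quadratic in derivatives of $f$) are no worse and are handled exactly by the Littlewood--Paley splitting used for $\|f\nb f\|_{L^2_{\tau,x}}^2$ in~\eqref{OO5b} --- the Strichartz norms $\|\jb{\nb}^s f\|_{L^p_{T,x}}$, $p\in\{\tfrac52,3,4\}$, for comparable frequencies and Lemma~\ref{LEM:biStv} with $\|f\|_{Y^s_T}$ for the non-resonant ones --- after peeling off surplus powers of $f$ by H\"older into $\|f\|_{L^\infty_T L^{10/3}_x}$.

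Finally, for the critical term $\int_0^\tau\!\!\int|v|^\frac83|\nb f|^2$ I would mimic~\eqref{nonlin4a}--\eqref{OO9a}. Write $|v|^\frac83|\nb f|^2=|v|^\frac23|v\nb f|^2$ and control $|v|^\frac23$ by H\"older through $\|v\|_{L^\infty_T L^{10/3}_x}\lesssim\big(\sup_{t\in[0,\tau]}\E(v)(t)\big)^{3/10}+\|f\|_{L^\infty_T L^{10/3}_x}$, i.e. by a \emph{sublinear} power of the energy (this is where the potential part of $\E$ is essential). For the bilinear factor $v\nb f$, Littlewood--Paley decompose $\sum_{N_1,N_2}$ and split into a resonant regime $N_1\gtrsim N_2^\g$ --- estimated by interpolating the mass bound (Lemma~\ref{LEM:mass}) with the kinetic part of $\E(v)$, paying $\|\jb{\nb}^{s-}f\|_{L^\infty_{T,x}}$ on the high-frequency factor, as in~\eqref{OO6} --- and a non-resonant regime $N_1\ll N_2^\g$, where Lemma~\ref{LEM:biStv} applies and one substitutes the Duhamel formula $v=-i\I\big[|v+f|^\frac43(v+f)\big]$ for the $v$-factor on only a $\theta$-fraction of the estimate, exactly as in~\eqref{OO7}; the resulting degree-$\tfrac{10}3$ expression in $v+f$, with its derivative discarded, is absorbed by the potential part of $\E(v)$ together with the $L^{14/3}_T L^{10/3}_x$-Strichartz norm of $f$, while the remaining $(1-\theta)$-fraction absorbs the excess derivative on $v$ against the kinetic energy and the mass. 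Collecting the three resulting constraints on $\theta$ and $\g$ (the $d=5$ analogues of~\eqref{OO6a},~\eqref{OO9},~\eqref{OO10}) and optimizing produces $s>\tfrac{63}{68}$; a standard continuity argument for $\sup_{t\in[0,\tau]}\E(v)(t)$ then gives~\eqref{EE1}.

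The hard part is this last step. Compared with $d=6$ --- where the corresponding quantity is literally $\|v\nb f\|_{L^2_{\tau,x}}^2$ and the Duhamel substitution produces a cubic nonlinearity --- here it produces a degree-$\tfrac{10}3$ nonlinearity in $v+f$, so a larger power of the potential energy must be spent and correspondingly less room is left for the bilinear gain $(N_1/N_2)^{\frac12-}$; pushing the exact derivative distribution and interpolation exponents through this is precisely what forces the threshold to worsen from $\tfrac{20}{23}$ to $\tfrac{63}{68}$. A subsidiary nuisance, absent when $d=6$, is that since the power of $v$ is $\tfrac83$ rather than $2$, the bilinear structure must first be dug out of $|v|^\frac83$ (and likewise out of the $|f|^\alpha|\nb f|^2$ remainders) by H\"older, with the surplus powers controlled by the potential part of $\E(v)$ and by the norms collected in $\|f\|_{B^s(T)}$.
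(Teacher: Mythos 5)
Your overall outline is essentially the paper's: the modified energy $\E$ kills the potential-derivative contribution $\II$ of \eqref{intro1}, the time derivative collapses (after integration by parts) to $\Re i\int\nb\big(|v+f|^{4/3}(v+f)\big)\cdot\nb\cj f\,dx$, Young's inequality isolates the critical quantity $\int_0^\tau\!\!\int|v|^{8/3}|\nb f|^2\,dx\,dt$, and the treatment of that quantity --- peeling off $|v|^{2/3}$ into the potential energy via $\|v\|_{L^\infty_\tau L^{10/3}_x}$, Littlewood--Paley split, mass/energy interpolation on the resonant piece, Lemma~\ref{LEM:biStv} plus Duhamel substitution (only to a $\theta$-fraction) on the non-resonant piece --- matches the paper's \eqref{XX3}--\eqref{XX12}. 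Your manoeuvre of subtracting the pure-$f$ identity $-\Re i\int|f|^{4/3}f\,\Dl\cj f = \dt\tfrac3{10}\int|f|^{10/3}$ before integrating is a small variation on the paper, which instead bounds the Gronwall inequality directly and estimates the pure-$f$ piece $\II_2$ of \eqref{XX1} separately.

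The gap is in your treatment of what you call the ``remaining terms.'' After distributing $\nb$ via \eqref{nonlin5} and dotting with $\nb\cj f$, the $H$-type term $|v+f|^{4/3}\nb f\cdot\nb\cj f = |v+f|^{4/3}|\nb f|^2$ is indeed real and drops out under $\Re i$, as you say. But the $G$-type term $\tfrac23\Re i\int\frac{(v+f)^2}{|v+f|^{2/3}}\,\nb\cj f\cdot\nb\cj f\,dx$ does \emph{not} vanish: $\nb\cj f\cdot\nb\cj f=(\nb\cj f)^2$ is a complex scalar, not $|\nb f|^2$. This is exactly the term $\II$ kept in the paper's \eqref{XX1}, and after extracting the $v$-dependent piece it becomes $\II_1=\sum N_2N_3\big\||v|^{4/3}\P_{N_2}f\,\P_{N_3}f\big\|_{L^1_{\tau,x}}$. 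You dismiss this --- listed in your proposal as $\int|v|^2|\nb f|^2$, $\int|f|^\alpha|\nb f|^2$ --- as ``handled exactly by the Littlewood--Paley splitting used for $\|f\nb f\|_{L^2_{\tau,x}}^2$ in \eqref{OO5b}, after peeling off surplus powers of $f$ by H\"older into $\|f\|_{L^\infty_T L^{10/3}_x}$.'' That cannot work for the pieces carrying a factor of $v$: those powers are of $v$, not of $f$, and cannot be absorbed into $\|f\|_{B^s(T)}$. One must again peel off $|v|^{1/3}$ into the potential energy, decompose the remaining bilinear factor $v\,\P_{N_2}\jb{\nb}^{s-}f$ with the resonant/non-resonant split, interpolate against the mass bound (Lemma~\ref{LEM:mass}) in the resonant regime, and substitute Duhamel for $v$ with the $\theta$-power trick in the non-resonant regime --- precisely the paper's \eqref{XX20}--\eqref{XX24}, using \eqref{XX9}. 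That argument yields its own constraint $s>\tfrac{59}{68}$; it happens to be weaker than the $\tfrac{63}{68}$ coming from the main term, so the final threshold is unaffected, but the term is emphatically not dispatched by the $\|f\nb f\|$-splitting of \eqref{OO5b}, and your proposal would stall here without repeating the Duhamel-substitution machinery.
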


The proof of Proposition \ref{PROP:energy2}
is similar to that of Proposition \ref{PROP:energy}
but is more complicated
due to the (higher) fractional power of the nonlinearity.

\begin{proof}
Proceeding as in \eqref{OO1}
with  $\dt \big(|v+f|^\frac{10}{3}\big) = \frac{10}{3}|v+f|^\frac{4}{3} \Re\big((\cj{ v +f})\dt (v+f)\big)$, we have 
\begin{align}
\dt \E(v) 
& =  
 \Re i \int |v + f|^\frac{4}{3} (v+ f) \Dl \cj vdx \notag \\
& \hphantom{X} 
+  \Re i \int  (\Dl v+\Dl f)  |v+f|^\frac{4}{3}  (\cj {v+f})dx
- \underbrace{\Re i \int |v+f|^\frac{14}{3}dx}_{=0}\notag \\
& =  
  \Re i \int \nb\big(|v + f|^\frac{4}{3} (v+ f)\big) \cdot \nb \cj fdx \notag \\
\intertext{With \eqref{nonlin5},} 
& =
 \frac{4}{3} \Re i \int \frac{v+f}{|v+f|^\frac{2}{3} }\Re \big((v+f) \nb (\cj{v+f})\big) \cdot \nb \cj fdx \notag \\
& \hphantom{X} 
+   \Re i \int |v + f|^\frac{4}{3} \nb (v+ f) \cdot \nb \cj fdx \notag \\
& =
 \frac{5}{3} \Re i \int |v+f|^\frac{4}{3} \nb  v \cdot \nb \cj f dx 
+  \frac{2}{3} \Re i \int \frac{(v+f)^2}{|v+f|^\frac{2}{3} }\nb \cj v  \cdot \nb \cj fdx \notag \\
& \hphantom{X} 
+ \frac{2}{3} \Re i \int \frac{(v+f)^2}{|v+f|^\frac{2}{3} } \nb \cj f\cdot \nb \cj fdx \notag\\
& \les \|\nb v \|_{L^2_x}^2
+ \big\| |v+f|^\frac{4}{3} \nb f \big\|_{L^2_x}^2
+  \big\| |v+f|^\frac{4}{3} \nb  f\cdot \nb  f\big\|_{L^1_x}\notag\\
& \les \E(v) 
+ \big\| |v+f|^\frac{4}{3} \nb f \big\|_{L^2_x}^2
+  \big\| |v+f|^\frac{4}{3} \nb  f\cdot \nb  f\big\|_{L^1_x}.
\label{XX1}
\end{align}

\noi
By solving the differential inequality \eqref{XX1} 
with $v|_{t = 0} = 0$
in a crude manner, we obtain
\begin{align}
 \E(v)(\tau) 
 & \les  
 \int_0^\tau e^{C (\tau - t)}
\Big\{  \big\| |v+f|^\frac{4}{3} \nb f \big\|_{L^2_x}^2
+  \big\| |v+f|^\frac{4}{3} \nb  f\cdot \nb  f\big\|_{L^1_x}
\Big\}dt\notag\\
 &  \leq
 e^{C T}
\Big\{ \big\| |v+f|^\frac{4}{3} \nb f \big\|_{L^2_{\tau, x}}^2
+  \big\| |v+f|^\frac{4}{3} \nb  f\cdot \nb  f\big\|_{L^1_{\tau, x}}\Big\}\notag\\
& =: e^{CT} \big\{\1 + \II\big\}
\label{XX2}
\end{align}

\noi
for any $\tau \in [0, T]$,

We first consider $\1$.
By H\"older's inequality, we have
\begin{align}
 \big\| |v+f|^\frac{4}{3} \nb f \big\|_{L^2_{\tau, x}}
& \les  \big\| |f|^\frac{4}{3} \nb f \big\|_{L^2_{\tau, x}}
+ 
 \big\| |v|^\frac{4}{3} \nb f \big\|_{L^2_{\tau, x}}\notag\\
& \les \| f\|_{L^\infty_{\tau, x}}^\frac{1}{3} \| f \nb f \|_{L^2_{\tau, x}}
+ \| v\|_{L^\infty_\tau L^\frac{10}{3}_x}^\frac{1}{3}  
 \| v \nb f \|_{L^2_\tau L^\frac{5}{2}_x}.
\label{XX3}
\end{align}

\noi
Arguing as in \eqref{OO5c}, 
we have
\begin{align}
 \| f \nb f\|_{L^2_{\tau, x}}^2
\les  \| f \|_{L^4_\tau W^{s, 4}_x}^4
+ \|f\|_{Y^s_\tau}^4,
\label{XX4}
\end{align}

\noi
provided that $s  > \frac 45$.
On the other hand, by the dyadic  decomposition, we have 
\begin{align}
 \| v\|_{L^\infty_\tau L^\frac{10}{3}_x}^\frac{1}{3}  
\| v \nb f \|_{L^2_\tau L^\frac{5}{2}_x}
& \les \Big\{\sup_{t \in [0, \tau]} \big(\E(v)(t)\big)^\frac{1}{10} + 
 \| f\|_{L^\infty_\tau L^\frac{10}{3}_x}^\frac{1}{3}  \Big\}\notag \\
& \hphantom{XXX}
\times 
\sum_{N_1, N_2 \in 2^{\NB_0}}
N_2 \| \P_{N_1} v \P_{N_2} f \|_{L^2_\tau L^\frac{5}{2}_x}.
\label{XX5}
\end{align}

\noi
Then, by interpolation, we have\footnote{In 
the following, we drop  the summation over $N_1$ and $N_2$
for conciseness of the presentation.
Note that we can simply sum over $N_1$ and $N_2$ 
at the end  by losing
an $\eps$-amount of derivative.
Similar comments apply to other dyadic summations.
}
\begin{align}
N_2 \| \P_{N_1} v \P_{N_2} f \|_{L^2_\tau L^\frac{5}{2}_x} 
& \leq
N_2 \| \P_{N_1} v \P_{N_2} f \|_{L^2_{\tau, x}}^\frac{1}{2} 
\| \P_{N_1} v \P_{N_2} f \|_{L^2_\tau L^\frac{10}{3}_x}^\frac{1}{2} \notag \\
& \leq
N_2 \| \P_{N_1} v \P_{N_2} f \|_{L^2_{\tau, x}}^\frac{1}{2} 
\| \P_{N_1} v\|_{L^\infty_\tau L^\frac{10}{3}_x}^\frac{1}{2}
\| \P_{N_2} f \|_{L^2_\tau L^\infty_x}^\frac{1}{2} \notag \\
& \les 
\Big\{ \sup_{t \in [0, \tau]} \big(\E(v)(t)\big)^\frac{3}{20}
+ \| \P_{N_1} f\|_{L^\infty_\tau L^\frac{10}{3}_x}^\frac{1}{2}
\Big\}\notag\\
& \hphantom{XXX}
\times 
N_2 \| \P_{N_1} v \P_{N_2} f \|_{L^2_{\tau, x}}^\frac{1}{2} 
\| \P_{N_2} f \|_{L^2_\tau L^\infty_x}^\frac{1}{2}.
\label{XX6}
\end{align}

\noi
We now divide the argument into the following two cases:
\[ \text{(i)}\ N_1 \ges N_2^\g\qquad 
\text{and} \qquad 
\text{(ii)}\ 
N_1 \ll N_2^\g\]

\noi
for some $\g \in (0, 1)$ (to be chosen later).
We first estimate 
the contribution from (i) $N_1 \ges N_2^\g$.
By interpolation and Lemma \ref{LEM:mass}, 
we have
\begin{align}
N_2 \| \P_{N_1} v \P_{N_2} f \|_{L^2_{\tau, x}}^\frac{1}{2} 
& \| \P_{N_2}   f \|_{L^2_\tau L^\infty_x}^\frac{1}{2}
 \les N_1^{\frac 12-}
N_2^{1-\frac 12 \g +} \| \P_{N_1} v \P_{N_2} f \|_{L^2_{\tau, x}}^\frac{1}{2} 
\| \P_{N_2} f \|_{L^2_\tau L^\infty_x}^\frac{1}{2}\notag\\
& \les 
 \| \P_{N_1}  \jb{\nb}^{1-}v  \P_{N_2} \jb{\nb}^{s-} f \|_{L^2_{\tau, x}}^\frac{1}{2} 
\| \jb{\nb}^{s-} f \|_{L^2_\tau L^\infty_x}^\frac{1}{2}\notag\\
& \leq C(T) \Big\{ \sup_{t \in [0, \tau]} \big(\E(v)(t) \big)^{\frac{1}{4}-}
\|  \psi \|_{ L^2_x}^{0+}
+ \| \psi\|_{L^2_x}^\frac{1}{2}\Big\}
\|  \jb{\nb}^{s-} f\|_{L^\infty_{\tau, x}}
\label{XX7}
\end{align}

\noi
for any $\tau \in [0, T]$, 
provided that
\begin{align}
s > 1 -\tfrac 12 \g.
\label{XX8}
\end{align}

Next, we consider  (ii) $N_1 \ll N_2^\g$.
Recall that $(q, r) = \big(2, \frac{10}{3} \big)$ is admissible.
Then, proceeding as in \eqref{OO7}
with  Lemma \ref{LEM:biStv},  the Duhamel formula (with $v|_{t = 0} = 0$),
the linear estimate (Lemma \ref{LEM:lin}) and 
the Strichartz estimates (Lemma \ref{LEM:Stv}), we have
\begin{align}
\| \P_{N_1}  v \P_{N_2} f\|_{L^2_{\tau, x}}
& \les N_1^{2-}N_2^{-\frac{1}{2}+}
\| \P_{N_1}  v \|_{Y^0_\tau}\|\P_{N_2} f\|_{Y^0_\tau}\notag\\
& \les N_1^{2-}N_2^{-\frac{1}{2}+}
\big(\|  v \|_{L^\frac{14}{3}_\tau L^\frac{10}{3}_{x}}^\frac{7}{3}
+ \| f \|_{L^\frac{14}{3}_\tau L^\frac{10}{3}_{x}}^\frac{7}{3}\big)
\|\P_{N_2} f\|_{Y^0_\tau}
\label{XX9}
\end{align}

\noi
As in the proof of Proposition \ref{PROP:energy}, 
we   apply \eqref{XX9} only to the $\theta$-power for some $\theta \in (0, 1)$.
With \eqref{XX9}, we have
\begin{align}
& 
N_2 \| \P_{N_1} v \P_{N_2} f \|_{L^2_\tau L^2_x}^\frac{1}{2} 
\| \P_{N_2}   f \|_{L^2_\tau L^\infty_x}^\frac{1}{2}
 \notag\\
&  \hphantom{XX}
\les N_2^{1 - \frac{\theta}{4}+}
\big(\|  v \|_{L^\frac{14}{3}_\tau L^\frac{10}{3}_{x}}^\frac{7}{3}
+ \| f \|_{L^\frac{14}{3}_\tau L^\frac{10}{3}_{x}}^\frac{7}{3}\big)^{\frac{1}{2}\theta}
 \|\P_{N_2} f\|_{Y^0_\tau}^{\frac{1}{2}\theta}
\notag\\
 & \hphantom{XXXXXXXXXXXXX}
 \times 
\| N_1^{ \frac{2\theta}{1-\theta}-}\P_{N_1}  v \P_{N_2} f\|_{L^2_{\tau, x}}^{\frac{1}2(1-\theta)}
\| \P_{N_2}   f \|_{L^2_\tau L^\infty_x}^\frac{1}{2}
\notag\\
\intertext{By interpolation and Lemma \ref{LEM:mass},}
&  \hphantom{XX}
 \les C(T)
\big(\|  v \|_{L^\frac{14}{3}_\tau L^\frac{10}{3}_{x}}^\frac{7}{3}
+ \| f \|_{L^\frac{14}{3}_\tau L^\frac{10}{3}_{x}}^\frac{7}{3}\big)^{\frac{1}{2}\theta}
  \|\P_{N_2} f\|_{Y^s_\tau}^{\frac 12 \theta}
 \| \P_{N_1}  f\|_{ L^2_{\tau, x}}^{\frac{1-3\theta}{2}}
\notag\\
 & \hphantom{XXXXXXXXXXXXX}
 \times 
\| \P_{N_1} \jb{\nb} v\|_{L^\infty_\tau L^2_{x}}^{\theta}
\| \P_{N_2} \jb{\nb}^{s-} f\|_{L^\infty_{\tau, x}}^{1-\theta}
\label{XX10}
\end{align}

\noi
for any $\tau \in [0, T]$, 
provided that 
\begin{align}
1 - \frac{\theta}{4} < s.
\label{XX11}
\end{align}

Hence, from \eqref{XX3}, \eqref{XX4}, \eqref{XX5}, \eqref{XX6}, \eqref{XX7}, and \eqref{XX10}, 
we obtain
\begin{align}
\1 =  \big\| |v+f|^\frac{4}{3} \nb f \big\|_{L^2_{\tau, x}}^2
\leq C(T,  \| f\|_{B^s(T)})
\sup_{t \in [0, \tau]} \Big\{ 1+ \big(\E(v) (t)\big)^{1-}\Big\}
\label{XX12}
\end{align}

\noi
for any $\tau \in [0, T]$, 
provided that 
\begin{align*}
\frac{1}{2} + 
\frac 7{10} \theta + \theta < 1.
\end{align*}

\noi
In particular, by choosing $\theta = \frac 5{17}-$
and $\g = \frac 12 \theta$, 
it follows from \eqref{XX8} and \eqref{XX11} that 
the estimate \eqref{XX12} holds for
\begin{align}
s > \frac{63}{68} \approx 0.9265.
\label{XX13}
\end{align}

Next, we estimate $\II$ in \eqref{XX1}.
By symmetry, we have
\begin{align*}
\II & =  \big\| |v+f|^\frac{4}{3} \nb  f\cdot \nb  f\big\|_{L^1_{\tau, x}}
\les
\sum_{\substack{N_2, N_3 \in 2^{\NB_0}\\N_2 \geq N_3}}
N_2 N_3  \big\| |v+f|^\frac{4}{3}  \P_{N_2} f \cdot \P_{N_3}  f\big\|_{L^1_{\tau, x}}\notag\\
& \les
\sum_{\substack{N_2, N_3 \in 2^{\NB_0}\\N_2 \geq N_3}}
N_2 N_3  \big\| |v|^\frac{4}{3}  \P_{N_2}  f\cdot \P_{N_3}  f\big\|_{L^1_{\tau, x}}\notag\\
& \hphantom{XXXXX}
+ 
\sum_{\substack{N_2, N_3 \in 2^{\NB_0}\\N_2 \geq N_3}}
N_2 N_3  \| f \|_{L^2_{\tau, x}}^\frac{1}{3} \| f  \P_{N_2} f\|_{L^2_{\tau, x}} \|  \P_{N_3}  f\|_{L^3_{\tau, x}}\notag\\
& =: \II_1 + \II_2.
\end{align*}

\noi
We first estimate $\II_2$.
By the dyadic decomposition, we have 
\begin{align*}
\II_2 
& = 
\sum_{\substack{N_1 N_2, N_3 \in 2^{\NB_0}\\N_2 \geq N_3}}
N_2 N_3  \| f \|_{L^2_{\tau, x}}^\frac{1}{3} \| \P_{N_1} f  \P_{N_2} f\|_{L^2_{\tau, x}} \|  \P_{N_3}  f\|_{L^3_{\tau, x}}
\notag\\
& \leq \sum_{\substack{N_1 N_2, N_3 \in 2^{\NB_0}\\N_2 \geq N_3}}
N_2^{2-2s}  \| f \|_{L^2_{\tau, x}}^\frac{1}{3} \| \P_{N_1} f  \P_{N_2} \jb{\nb}^{s}f\|_{L^2_{\tau, x}} 
\|  \P_{N_3}  \jb{\nb}^s f\|_{L^3_{\tau, x}}\notag\\
& \leq C(T, \|f\|_{B^s(T)})
\sum_{N_1 N_2 \in 2^{\NB_0}}
N_2^{2-2s+}   \| \P_{N_1} f  \P_{N_2} \jb{\nb}^{s}f\|_{L^2_{\tau, x}} 
\end{align*}

\noi
for any $\tau \in [0, T]$.
If $N_1 \ges N_2^\g$ for some $\g \in (0, 1)$, 
then we have 
\begin{align}
N_2^{2-2s+}   \| \P_{N_1} f  \P_{N_2} \jb{\nb}^{s}f\|_{L^2_{\tau, x}} 
& \les N_1^{0-}
 N_2^{2-2s-\g s+ }   \| \P_{N_1} \jb{\nb}^s f  \P_{N_2} \jb{\nb}^{s}f\|_{L^2_{\tau, x}} \notag\\
& \les 
\| f\|_{L^4_\tau W_x^{s, 4}}^2,
 \label{XX15}
\end{align}

\noi
provided that $2 - 2s - \g s  < 0$, namely 
\begin{align}
 s > \frac{2}{2+\g}.
 \label{XX16}
\end{align}

\noi
If $N_1 \ll N_2^\g$, 
then by applying Lemma \ref{LEM:biStv}, 
we have
\begin{align}
N_2^{2-2s+}   \| \P_{N_1} f  \P_{N_2} \jb{\nb}^{s}f\|_{L^2_{\tau, x}} 
& \les N_1^{2-s -}
 N_2^{\frac 32-2s+}   \| \P_{N_1} f \|_{Y^s_\tau}
 \| \P_{N_2} f \|_{Y^s_\tau}
\notag\\
& \ll
 N_2^{\frac 32-2s+ \g (2 - s)+}   \| \P_{N_1} f \|_{Y^s_\tau}
 \| \P_{N_2} f \|_{Y^s_\tau}
\notag\\
& \ll
 \|  f \|_{Y^s_\tau}^2, 
 \label{XX17}
\end{align}

\noi
provided that $\frac 32-2s+ \g (2 - s) < 0$,
namely
\begin{align}
 s > \frac{3 + 4\g}{4+2\g}.
 \label{XX18}
\end{align}

\noi
It follows from 
 \eqref{XX15} and \eqref{XX17} with 
\eqref{XX16} and \eqref{XX18}
that 
\begin{align}
\II_2 
 \leq C(T, \|f\|_{B^s(T)})
\label{XX19a}
\end{align}

\noi
for any $\tau \in [0, T]$,
provided that 
\begin{align}
 s > \frac89 \approx 0.8889.
 \label{XX19}
\end{align}

Finally, we estimate $\II_1$.
By H\"older's inequality, we have
\begin{align}
\II_1 
& \les   
\sum_{\substack{N_2, N_3 \in 2^{\NB_0}\\N_2 \geq N_3}}
N_2^{2-2s}  \| v\|_{L^\frac{10}{3}_{\tau, x}}^\frac{1}{3} \| v  \P_{N_2} \jb{\nb}^{s-}  f\|_{L^2_{\tau, x}}
\| \P_{N_3} \jb{\nb}^s  f\|_{L^\frac{5}{2}_{\tau, x}}.
\label{XX20}
\end{align}

\noi
In the following, we estimate
\[
\| v  \P_{N_2} \jb{\nb}^{s-}  f\|_{L^2_{\tau, x}}
\les \sum_{N_1\in 2^{\NB_0}}
\|\P_{N_1} v  \P_{N_2} \jb{\nb}^{s-}  f\|_{L^2_{\tau, x}}.
\]

\noi
If $N_1 \ges N_2^\g$ for some $\g \in (0, 1)$, 
then 
\begin{align}
 N_2^{2-2s+} \| v  \P_{N_2} \jb{\nb}^{s-}  f\|_{L^2_{\tau, x}}
& \les N_1^{1-} N_2^{2-2s-\g +} \| v  \P_{N_2} \jb{\nb}^{s-}  f\|_{L^2_{\tau, x}}\notag\\
& \les C(T)  \| \jb{\nb}  v\|_{L^\infty_\tau L^2_x} \|  \jb{\nb}^{s-}  f\|_{L^\infty_{\tau, x}}
\label{XX21}
\end{align}

\noi
for any $\tau \in [0, T]$, provided that 
$2 - 2s < \g < 1$.

If $N_1 \ll N_2^\g$, then 
by applying \eqref{XX9} to the $\theta$-power
$\| v  \P_{N_2} \jb{\nb}^{s-}  f\|_{L^2_{\tau, x}}$
 as before, we have
\begin{align}
 N_2^{2-2s+} \| v &  \P_{N_2} \jb{\nb}^{s-}  f\|_{L^2_{\tau, x}}\notag\\
& \les  N_2^{2-2s-\frac 12 \theta+} 
\big(\|  v \|_{L^\frac{14}{3}_\tau L^\frac{10}{3}_{x}}^\frac{7}{3}
+ \| f \|_{L^\frac{14}{3}_\tau L^\frac{10}{3}_{x}}^\frac{7}{3}\big)^\theta
\|\P_{N_2} f\|_{Y^s_\tau}^\theta
\notag\\
& \hphantom{XXXXXXXX}
\times 
\| N_1^\frac {2\theta-}{1-\theta} \P_{N_1} v \|_{L^2_{\tau, x}}^{1-\theta}
\|  \P_{N_2} \jb{\nb}^{s-}  f\|_{L^\infty_{\tau, x}}^{1-\theta}\notag\\
\intertext{By interpolation and Lemma \ref{LEM:mass},}
& \leq  C(T) N_2^{2-2s-\frac 12 \theta+} 
\big(\|  v \|_{L^\frac{14}{3}_\tau L^\frac{10}{3}_{x}}^\frac{7}{3}
+ \| f \|_{L^\frac{14}{3}_\tau L^\frac{10}{3}_{x}}^\frac{7}{3}\big)^\theta
\|\P_{N_2} f\|_{Y^s_\tau}^\theta
\|  f \|_{L^\infty L^2_x}^{1-3\theta+}
\notag\\
& \hphantom{XXXXXXXX}
\times 
\|  \P_{N_1} \jb{\nb} v \|_{L^2_{\tau, x}}^{2\theta-}
\|  \P_{N_2} \jb{\nb}^{s-}  f\|_{L^\infty_{\tau, x}}^{1-\theta}\notag\\
& \leq C(T, \| f\|_{B^s(T)}) 
\big(1 + \|   v \|_{L^\infty_\tau L^\frac{10}{3}_{x}}^{\frac{7}{3}\theta}\big)
\|  \jb{\nb} v \|_{L^\infty_\tau L^2_{x}}^{2\theta-}
\label{XX22}
\end{align}

\noi
for any $\tau \in [0, T]$, 
provided that 
\begin{align}
s > 1 - \frac{\theta}{4}.
\label{XX23}
\end{align}

\noi
Putting \eqref{XX20}, 
\eqref{XX21}, 
and \eqref{XX22} together, 
we obtain 
\begin{align}
\II_1
\leq C(T,  \| f\|_{B^s(T)})
\sup_{t \in [0, \tau]} \Big\{ 1+ \big(\E(v) (t)\big)^{1-}\Big\}
\label{XX24}
\end{align}

\noi
by choosing $\theta \in (0, 1)$ such that $\frac{1}{10} + \frac{7}{10}\theta + \theta < 1$
with $\g = \frac{\theta}{2}$.
In particular, by choosing $\theta = \frac{9}{17}-$, 
the regularity restriction \eqref{XX23}
yields
\begin{align}
s > \frac{59}{68}\approx 0.8676.
\label{XX25}
\end{align}

Therefore, 
it follows from 
\eqref{XX2}, 
\eqref{XX12},   \eqref{XX19a}, and \eqref{XX24} with \eqref{XX13}, \eqref{XX19},  and  \eqref{XX25}
that 
\begin{align*}
\sup_{t \in [0, \tau]} \E(v)(t) 
\leq 
C(T, \|f \|_{B^s(T)})
\Big\{ 1 + 
\sup_{t \in [0, \tau]} \big(\E(v) (t)\big)^{1-}
\Big\}
\end{align*}

\noi
for any $\tau \in [0, T]$, 
provided that $s > \frac{63}{68}$.
Therefore, \eqref{EE1}
follows from  the standard continuity argument.
\end{proof}

\subsection{Long time existence of solutions to the perturbed NLS}
\label{SUBSEC:stability}

Our main goal in this subsection is to prove long time existence of solutions to the perturbed NLS \eqref{ZNLS1}
under some regularity assumptions on the perturbation $f$
(Proposition \ref{PROP:perturb2}).
The main ingredients are
the energy estimates (Propositions \ref{PROP:energy} and \ref{PROP:energy2})
and the following perturbation lemma.

\begin{lemma}[Perturbation lemma]\label{LEM:perturb}
Given $d = 5$ or $6$, let $(q_d, r_d)$ be the admissible pair in~\eqref{qr1}.
Let  $I$ be
a compact interval
with $|I|\leq 1$.
Suppose that $v \in C(I; H^1(\R^d))$ satisfies the following perturbed NLS:
\begin{align*}
i \dt v + \Dl v = |v|^\frac{4}{d-2} v + e,
\end{align*}

\noi
satisfying
\begin{align*}
\|v \|_{L^{q_d}_t(I; W_x^{1, r_d}(\R^d))}
 + \|v\|_{L^\infty(I;  H^1(\R^d))} \leq R
\end{align*}

\noi
for some $R \geq 1$.
Then, there exists $\eps_0 = \eps_0(R) > 0$
such that
if
we have
\begin{align*}
\|w_0 - v(t_0) \|_{  H^1(\R^d)}
+\|e\|_{  N^1(I)} \leq \eps
\end{align*}

\noi
for
some  $w_0 \in H^1(\R^d)$,
some $t_0 \in I$, and some $\eps < \eps_0$, then
there exists a solution
$w \in X^1(I)\cap
C(I; H^1(\R^d))$
to  the defocusing  NLS  \eqref{XNLS}
with $w(t_0) = w_0$
such that
\begin{align*}
\|w\|_{X^1(I)}+\|v\|_{X^1(I)} & \leq C(R), \\
\|w - v\|_{X^1(I)} & \leq C(R)\eps,
\end{align*}

\noi
where $C(R)$ is a non-decreasing function of $R$.
\end{lemma}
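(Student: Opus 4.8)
The plan is to run the standard short-time perturbation/stability argument for energy-critical NLS, as in \cite{TVZ, KOPV, BOP2}, adapted to the $X^1$/$Y^1$ functional framework of Section \ref{SEC:space}. First I would fix $R\geq 1$ and work on a compact interval $I$ with $|I|\leq 1$. By subdividing $I$ into finitely many consecutive subintervals $I_1,\dots,I_J$ (with $J = J(R)$) on each of which $\|v\|_{L^{q_d}_t(I_j; W^{1,r_d}_x)}$ is smaller than a threshold $\eta_0 = \eta_0(R)$ to be chosen, it suffices to prove the statement on each $I_j$ and then iterate, propagating the error constant from one subinterval to the next (the number of subintervals is controlled by $R$, so the final constant $C(R)$ is still a function of $R$ only). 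On a single such short subinterval, set $w$ to be the solution of the unperturbed defocusing NLS \eqref{XNLS} with $w(t_0) = w_0$; its local existence and the bound $\|w\|_{X^1(I_j)}\leq C(R)$ follow from the deterministic small-data theory in the $X^1$-spaces (Lemmas \ref{LEM:lin}, \ref{LEM:Stv}, \ref{LEM:biStv}, together with the nonlinear estimate \eqref{nonlin1}--\eqref{nonlin2} specialized to $z\equiv 0$, which is purely deterministic), using that $\|S(t-t_0)w_0\|_{L^{q_d}_t W^{1,r_d}_x}$ is small on a short interval because $\|v\|_{L^{q_d}_t W^{1,r_d}_x}$ is small and $\|w_0 - v(t_0)\|_{H^1}\leq \eps$.

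Next I would estimate the difference $\gamma := w - v$, which solves
\begin{align*}
i\dt \gamma + \Dl \gamma = |w|^\frac{4}{d-2}w - |v|^\frac{4}{d-2}v - e, \qquad \gamma(t_0) = w_0 - v(t_0).
\end{align*}
Applying Lemma \ref{LEM:lin} and the difference estimate \eqref{nonlin2} (again with $z\equiv 0$), one gets
\begin{align*}
\|\gamma\|_{X^1(I_j)} \lesssim \|w_0 - v(t_0)\|_{H^1} + \|e\|_{N^1(I_j)} + \big(\|w\|_{Y^1(I_j)}^\frac{4}{d-2} + \|v\|_{Y^1(I_j)}^\frac{4}{d-2}\big)\|\gamma\|_{Y^1(I_j)}.
\end{align*}
Here $X^1\hookrightarrow Y^1$, and $\|v\|_{Y^1(I_j)}\lesssim \|v\|_{L^\infty_t H^1} + \|v\|_{L^{q_d}_t W^{1,r_d}_x}\lesssim R$; the coefficient of $\|\gamma\|_{Y^1(I_j)}$ is made $\leq \tfrac12$ precisely by the smallness of $\|v\|_{L^{q_d}_t W^{1,r_d}_x}$ on $I_j$ (this is where the $\eta_0(R)$-subdivision is used). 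Absorbing that term gives $\|\gamma\|_{X^1(I_j)}\lesssim_R \eps$, hence in particular $\|w - v\|_{L^\infty_t H^1(I_j)}\lesssim_R\eps$ and $\|w\|_{L^\infty_t H^1(I_j)}\leq R + C(R)\eps \leq 2R$ for $\eps$ small, so the hypotheses of the argument are restored at the right endpoint of $I_j$ with $R$ replaced by a controlled multiple of $R$; after $J(R)$ steps the total error is still $C(R)\eps$ by a geometric-sum bookkeeping. Taking $\eps_0(R)$ small enough that all the smallness requirements above hold simultaneously completes the argument, and the monotonicity of $C(R)$ in $R$ is automatic since every constant produced is a polynomial/exponential expression in $R$.

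The main obstacle I expect is not any single estimate but the \emph{bookkeeping of the iteration}: one must verify that the number of subintervals $J$ depends only on $R$ (which requires knowing a priori that $\|v\|_{L^{q_d}_t W^{1,r_d}_x(I)}$ is finite and bounded by $R$, already in the hypothesis), and that the error constant does not blow up uncontrollably as one chains the subintervals — the $\eps$ on the right endpoint of $I_j$ becomes the ``$\|w_0 - v(t_0)\|_{H^1}$'' input on $I_{j+1}$, so the constants multiply, but since $J = J(R)$ is fixed the product is still $C(R)$. A secondary technical point, flagged already in the proof of Proposition \ref{PROP:nonlin1}, is that to apply Lemma \ref{LEM:lin} one should work with a truncated nonlinearity and then remove the truncation by a standard limiting argument; I would handle this exactly as in \cite{BOP2} and suppress the details. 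Everything else reduces to the nonlinear estimates \eqref{nonlin1}--\eqref{nonlin2} with $z$ set to zero, which are deterministic consequences of the function-space machinery of Section \ref{SEC:space}.
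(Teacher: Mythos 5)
Your overall architecture---subdivide $I$ into $J(R)$ pieces where $\|v\|_{L^{q_d}_t W^{1,r_d}_x}$ is small, compare with the unperturbed solution on each piece via a difference estimate, absorb, and iterate---is exactly the standard short-time stability argument that the paper invokes by deferring to Lemma~7.1 of \cite{BOP2}. However, the writeup contains a genuine gap in the key estimate.

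The inequality
\[
\|v\|_{Y^1(I_j)}\lesssim \|v\|_{L^\infty_t H^1(I_j)} + \|v\|_{L^{q_d}_t W^{1,r_d}_x(I_j)}
\]
that you assert is simply false: the $Y^1$-norm is a $V^2_\Delta$-based norm and is not controlled by $L^\infty_t H^1$ plus a space-time Lebesgue norm (e.g., a rapidly oscillating but pointwise-small function has small norms on the right but large norm on the left). Moreover, even if it were true, it would give a coefficient of size $R^{4/(d-2)}\geq 1$ in front of $\|\gamma\|_{Y^1(I_j)}$, which is not small and cannot be absorbed---yet a line later you claim the coefficient is made $\leq\tfrac12$ by shrinking $I_j$. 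These two statements are mutually incompatible; the root cause is that you invoked the coarse difference estimate \eqref{nonlin2} (whose coefficient is $\|v_1\|_{Y^1}^{4/(d-2)}+\|v_2\|_{Y^1}^{4/(d-2)}$), when what the absorption actually requires is the refined estimate \eqref{Y2} from Corollary~\ref{COR:nonlin2} specialized to $f\equiv 0$, whose coefficient is $\|v_1\|_{L^{q_d}_t W^{1,r_d}_x}^{4/(d-2)}+\|v_2\|_{L^{q_d}_t W^{1,r_d}_x}^{4/(d-2)}$ and which therefore really does become small on the subintervals $I_j$. This distinction is exactly why the paper states Corollary~\ref{COR:nonlin2} separately from Proposition~\ref{PROP:nonlin1}.

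A second, related omission: the hypotheses of the lemma only give $\|v\|_{L^{q_d}_t W^{1,r_d}_x(I)}+\|v\|_{L^\infty_t H^1(I)}\leq R$, and the conclusion asserts $\|v\|_{X^1(I)}\leq C(R)$. This upgrade from Lebesgue/$L^\infty H^1$ control to $X^1$ control does not come for free; it must be established as a preliminary step by running the Duhamel formula on each $I_j$ together with Lemma~\ref{LEM:lin} and \eqref{Y1} (with $f\equiv 0$), which gives $\|v\|_{X^1(I_j)}\lesssim \|v(t_j)\|_{H^1}+\|v\|_{L^{q_d}_t W^{1,r_d}_x(I_j)}^{(d+2)/(d-2)}+\|e\|_{N^1(I_j)}\lesssim R$ once the $L^{q_d}_t W^{1,r_d}_x$-norm is small, and then glue. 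With these two repairs---using \eqref{Y2} rather than \eqref{nonlin2} for the difference equation, and inserting the preliminary bootstrap for $\|v\|_{X^1}$---your chaining/bookkeeping paragraph goes through and recovers the intended proof.
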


See
\cite{CKSTT,  TV, TVZ} for perturbation and stability results
on the usual Strichartz and Lebesgue spaces.
For  perturbation lemmas
involving the critical $X^{1}$-norm,
see \cite{IP, BOP2}.
The proof of Lemma \ref{LEM:perturb}
follows from a straightforward modification
of the proof  of Lemma 7.1 in \cite{BOP2}
and hence we omit details.

We now state a long time existence result 
for the perturbed NLS \eqref{ZNLS1}.
Fix $d = 5$ or $6$
and let $s \in (s_*, 1)$, where $s_*$ is as in Theorem \ref{THM:GWP}.
Then, let $\dl = \dl(d, s) > 0$ be as in Corollary \ref{COR:nonlin2}.
Given $T>0$,
suppose that $f \in \wt E_{M, T}$ for some $M > 0$, 
where $ \wt E_{M, T}$ is as in~\eqref{ER2}.
Namely, we have
\begin{equation}
 \| f \|_{Y^s([0, T))}
+ \| f \|_{S^s ([0, T) )} \le M. 
\label{P0}
 \end{equation}

\noi
Then,  Lemma \ref{LEM:LWP4}
guarantees
existence of a solution $v\in C([0,\tau_0];H^1(\T^d) )\cap X^1([0,\tau_0])$ to the perturbed NLS \eqref{ZNLS1},
at least for a short time $\tau_0 > 0$.
Furthermore, assume that there exists $K > 0$
such that 
\begin{align}
\text{(i)} \  \|f\|_{A^s(T)} \leq K
\ \text{ when $d = 6$}
\qquad \text{and}\qquad 
\text{(ii)}  \ \|f\|_{B^s(T)} \leq K \ \text{ when $d = 5$},
\label{P1}
\end{align}

\noi
where $A^s(T)$ and $B^s(T)$ are as in 
Propositions \ref{PROP:energy} and \ref{PROP:energy2}.
Then, it follows from Lemma~\ref{LEM:mass} and Propositions \ref{PROP:energy}
and \ref{PROP:energy2} that  there exists $R = R(K, T) >0$
such that 
\begin{align}
\| v\|_{L^\infty([0, T]; H^1(\R^d))} \leq R
\label{Xenergy}
\end{align}

\noi
for a  solution
$v$  to  \eqref{ZNLS1}.

Under these assumptions, 
by iteratively applying Lemma \ref{LEM:perturb}, 
we obtain the following long time existence result
for the perturbed NLS \eqref{ZNLS1} on $[0, T]$.

\begin{proposition}\label{PROP:perturb2}
Let $d = 5, 6$
and $s \in (s_*,  1)$, where
$s_*$ is as in Theorem \ref{THM:GWP}.
Given $T>0$, 
assume that the hypotheses \eqref{P0} and \eqref{P1} hold.
Then, 
there exists
$\tau = \tau(R,M, T, s)>0$ 
such that,  given any $t_0 \in [0, T)$,
the solution $v$ to \eqref{ZNLS1}
exists on $[t_0, t_0 + \tau]\cap [0, T]$.
In particular, the energy estimate \eqref{Xenergy}
 guarantees existence of $v$ on
the entire interval $[0, T]$.

\end{proposition}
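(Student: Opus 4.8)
The plan is to run the standard ``good local well-posedness'' iteration, using the perturbation lemma together with the global existence and a priori Strichartz control of the genuine (unperturbed) equation \eqref{XNLS}. First I would record what the global theory for \eqref{XNLS} gives us: for any initial datum $w_0 \in H^1(\R^d)$, the defocusing energy-critical NLS \eqref{XNLS} (in the defocusing case, $d=5,6$) has a global solution $w \in X^1_{\mathrm{loc}}(\R)\cap C(\R; H^1(\R^d))$, and moreover on any compact interval $J$ its $L^{q_d}_t(J; W^{1,r_d}_x)$-norm (in fact its $X^1(J)$-norm) is bounded by a constant depending only on $\|w_0\|_{H^1}$ and $|J|$; this is the content of \cite{CW, V} combined with the critical-norm bound of \cite{KOPV}. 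In particular, there is a number $L = L(R)$ such that whenever $\|w_0\|_{H^1} \le R$ and $J$ is any interval of length $\le 1$, the solution $w$ to \eqref{XNLS} with data $w_0$ at any point of $J$ satisfies $\|w\|_{L^{q_d}_t(J; W^{1,r_d}_x)} + \|w\|_{L^\infty_t(J; H^1)} \le L(R)$.

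Next I would set up the partition. By \eqref{Xenergy} the solution $v$ to \eqref{ZNLS1}, wherever it exists on $[0,T]$, obeys $\|v(t)\|_{H^1} \le R$ with $R = R(K,T)$. Using the global bound just stated (applied to the genuine solution $w$ with data $v(t_0)$ at time $t_0$, on the interval $[t_0, t_0 + 1]\cap[0,T]$), the function $w$ has $L^{q_d}_t W^{1,r_d}_x$-norm at most $L(R)$ on that interval. Partition $[t_0, t_0+1]\cap[0,T]$ into $J$ consecutive subintervals $I_j$, with $J = J(R)$ depending only on $\|w\|_{L^{q_d}_t(I; W^{1,r_d}_x)}$ (hence only on $R$), such that $\|w\|_{L^{q_d}_t(I_j; W^{1,r_d}_x)}$ is small --- smaller than any prescribed threshold. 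On each $I_j$ I would invoke the perturbation lemma, Lemma~\ref{LEM:perturb}, with $e = \mathcal N(v+f) - \mathcal N(v) = |v+f|^{\frac4{d-2}}(v+f) - |v|^{\frac4{d-2}}v$, which is exactly the forcing term by which \eqref{ZNLS1} fails to be \eqref{XNLS}. Using the difference estimate \eqref{Y2} of Corollary~\ref{COR:nonlin2} (with the role of $v_2$ played by $0$), together with $f \in \wt E_M(I_j)$ and the a priori bound $\|v\|_{X^1(I_j)} \le C(R)$, one gets $\|e\|_{N^1(I_j)} \lesssim |I_j|^\theta M^{\frac4{d-2}}\|v\|_{Y^1(I_j)} + (\text{terms of the form }|I_j|^\theta M^{\frac{d+2}{d-2}})$, plus the $v$-only pieces which are controlled by the smallness of $\|v\|_{L^{q_d}_t(I_j; W^{1,r_d}_x)}$. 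Shrinking the subintervals (i.e.\ enlarging $J$, which only depends on $R$ and $M$ and $T$) makes $\|e\|_{N^1(I_j)} < \eps_0(L(R))/(2J)$, where $\eps_0$ is the threshold in Lemma~\ref{LEM:perturb}. Since at the first subinterval $w(t_0) = v(t_0)$ exactly, Lemma~\ref{LEM:perturb} applies and yields $\|w - v\|_{X^1(I_0)} \le C(L(R))\eps$; in particular $\|w(t_1) - v(t_1)\|_{H^1}$ is small, so one can iterate across $I_1, I_2, \dots$, at each stage feeding the accumulated $H^1$-error plus $\|e\|_{N^1(I_j)}$ into the hypothesis, the total error growing by at most a geometric factor $C(L(R))$ per step and hence remaining below $\eps_0$ after the fixed number $J = J(R)$ of steps. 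This shows $v$ is controlled in $X^1$, hence extends, on all of $[t_0, t_0+1]\cap[0,T]$; setting $\tau = 1$ (or, more honestly, $\tau = \min(1, \text{something depending on } R, M, T, s)$ if one needs the subinterval lengths to also satisfy $|I_j| \le \eta_0^{2/\theta}$ from Lemma~\ref{LEM:LWP4} to restart the local theory cleanly) gives the stated uniform existence time.

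The final assertion --- global existence of $v$ on $[0,T]$ --- then follows by a bootstrap: starting from $t_0 = 0$, the solution exists on $[0,\tau]$; since $\tau$ is independent of $t_0$, restart at $t_0 = \tau$ to reach $[0, 2\tau]$, and after $\lceil T/\tau\rceil$ steps one covers $[0,T]$. Crucially, at each restart the $H^1$-norm of $v$ is still bounded by $R = R(K,T)$ by \eqref{Xenergy} (the energy estimate is global on $[0,T]$, not just local), so $\tau$ does not degrade. Since $T$ was arbitrary, $v$ is global.

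The main obstacle --- the step that needs the most care --- is controlling the error $e = \mathcal N(v+f) - \mathcal N(v)$ in the $N^1$-norm on each short subinterval uniformly through the iteration. The delicate point is that the difference estimate \eqref{Y2} gives a term proportional to $\|v\|_{Y^1(I_j)}$ (not to a high power of the small $L^{q_d}_t W^{1,r_d}_x$-norm of $v$), so the smallness must come purely from the factor $|I_j|^\theta M^{\frac4{d-2}}$; this forces the number of subintervals $J$ to depend on both $R$ and $M$ (through $\eps_0(L(R))$), and one must check that this $J$, while possibly large, is still \emph{finite} and independent of $t_0$, so that the geometric accumulation of errors over $J$ steps stays below $\eps_0$. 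This is exactly the mechanism in \cite{BOP2} and \cite{TVZ, KOPV}; I would follow that template, citing Lemma 7.3 or the analogous ``iterated perturbation'' statement of \cite{BOP2} verbatim, since the only genuinely new inputs --- the energy bounds \eqref{Xenergy} for $d=5,6$ --- have already been established in Propositions~\ref{PROP:energy} and~\ref{PROP:energy2}.
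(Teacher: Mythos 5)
Your proposal follows exactly the paper's argument, which itself defers to Proposition 7.2 of \cite{BOP2}: approximate $v$ on $[t_0,t_0+\tau]$ by the global solution $w$ to \eqref{XNLS} with $w(t_0)=v(t_0)$, fix a subinterval count $J$ using Vi\c{s}an's global space-time bound on $w$, iterate the perturbation lemma, and close using the energy estimate \eqref{Xenergy}. One fix to your error-term accounting: since $e=\N(v+f)-\N(v)$, the pure-$v$ contributions cancel identically and there are no ``$v$-only pieces'' to control; moreover, applying \eqref{Y2} with $v_2=0$ bounds $\N(v+f)-\N(f)$, not $e$. The correct bound on $\|e\|_{N^1(I_j)}$ retains only the $|I_j|^{\theta}M$-weighted terms from \eqref{Y1} and is obtained by pulling a factor of $f$ out via the fundamental theorem of calculus as in \eqref{diff1}, after which one shrinks $\tau$ --- holding $J$ fixed, since the accumulated error grows like $C(R)^{J}$ --- to get each $\|e\|_{N^1(I_j)}$ below the perturbation threshold.
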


 Proposition \ref{PROP:perturb2}
follows from a straightforward modification 
of the proof of  Proposition~7.2 in \cite{BOP2}.
Hence,  we omit the details of the proof
but we briefly describe the main idea in the following.
Given $t_0 \in [0, T)$, 
the main idea is 
to  approximate  a solution $v$ to the perturbed NLS \eqref{ZNLS1}
by the global solution $w$ to the original NLS~\eqref{XNLS} with $w|_{t = t_0} = v(t_0)$
on $[t_0, t_0 + \tau]$, where $\tau = \tau(R,M, T, s)>0$
 is independent of $t_0 \in [0, T)$.
We achieve this goal 
by iteratively applying the perturbation lemma (Lemma \ref{LEM:perturb})
on short time intervals.
This is possible thanks to 
(i)~the a priori control \eqref{P0} and \eqref{Xenergy}
on $f$ and  the $H^1$-norm  of $v(t)$, respectively,  on $[0, T]$
and (ii) the following space-time control   on 
the global solution $w$ to \eqref{XNLS} due to Vi\c{s}an \cite{V}:
\[ \| w \|_{L^{\frac{2(d+2)}{d-2}}_{t, x}(\R\times \R^d)} \leq C(\| v(t_0)\|_{H^1}) = C(R).\]

\noi
See  the proof of  Proposition~7.2 in \cite{BOP2} for details.
In the following, we  point out 
the difference between the assumptions  in Proposition \ref{PROP:perturb2} above
and those in Proposition~7.2 in \cite{BOP2}.
The assumption in \cite{BOP2}
would read as 
``$\| f \|_{S^s (I)} \le |I|^\beta$ 
for any interval $I \subset [0, T]$'' in our context.
Note that  we are making a weaker assumption 
 on the $S^s$-norm  in \eqref{P0}.
This is possible thanks to the appearance of the factor $|I|^\theta$ 
in the nonlinear estimate \eqref{Y1} in Corollary \ref{COR:nonlin2}.
Namely, in this paper, 
we already exploited the subcritical nature of the perturbation
and created the factor  $|I|^\theta$ in \eqref{Y1}.
Compare this with Lemma 6.2 in \cite{BOP2}.

\subsection{Proof of Theorem \ref{THM:GWP}}	
\label{SUBSEC:GWP}

In this subsection, we present the proof of Theorem \ref{THM:GWP}.
By Borel-Cantelli lemma, it suffices to prove the following 
 ``almost'' almost sure global existence result.
See \cite{CO, BOP2}.

\begin{proposition}\label{PROP:asGWP}
Let $d = 5, 6$ and $s \in (s_*,  1)$,
where $s_*$ is as in Theorem \ref{THM:GWP}.
Given $\phi \in H^s(\R^d)$, let $\phi^\o$ be its Wiener randomization defined in \eqref{rand}.
Then, given any $T, \eps > 0$, there exists a set $\wt \O_{T, \eps}\subset \O$
such that
\begin{itemize}
\item[\textup{(i)}]
$P(\wt \O_{T, \eps}^c) < \eps$,

\item[\textup{(ii)}]
For each $\o \in \wt \O_{T, \eps}$, there exists a (unique) solution $u$
to \eqref{NLS}  on $[0, T]$
with $u|_{t = 0} = \phi^\o$.

\end{itemize}

\end{proposition}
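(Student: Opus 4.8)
The plan is to reduce to the long-time existence statement for the perturbed equation, Proposition~\ref{PROP:perturb2}, after checking that the random linear evolution $z^\o = S(t)\phi^\o$ lies, on a large event, in the class for which that proposition applies. First I would write $u = z^\o + v$: then $u$ solves \eqref{XNLS} on $[0,T]$ with $u|_{t=0}=\phi^\o$ precisely when $v$ solves the perturbed NLS \eqref{ZNLS1} with $f = z^\o$ (so that $v|_{t=0}=0$), and conversely uniqueness of $u$ in the class $S(t)\phi^\o + C([0,T];H^1(\R^d))$ is equivalent to uniqueness of $v$ in $X^1([0,T))\cap C([0,T];H^1(\R^d))$, which holds by the local theory of Lemma~\ref{LEM:LWP4}. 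Thus it suffices to produce, for given $T,\eps>0$, an event $\wt\O_{T,\eps}$ of probability $>1-\eps$ on which $z^\o$ obeys the hypotheses \eqref{P0} and \eqref{P1}.

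For this I would fix $M=M(T,\eps,\|\phi\|_{H^s})>0$, to be taken large, and set
\[
\wt\O_{T,\eps} := \big\{\o\in\O : \|\phi^\o\|_{H^s} + \|z^\o\|_{S^s([0,T))} \le M \ \text{ and }\ \|z^\o\|_{A^s(T)}\le M \ (\text{resp. }\|z^\o\|_{B^s(T)}\le M)\big\},
\]
the last requirement referring to $A^s(T)$ when $d=6$ and to $B^s(T)$ when $d=5$. On $\wt\O_{T,\eps}$ the linear estimate (Lemma~\ref{LEM:lin}) and the embedding $X^s\hookrightarrow Y^s$ give $\|z^\o\|_{Y^s([0,T))}\le\|\phi^\o\|_{H^s}\le M$, so $z^\o\in\wt E_{2M}([0,T))$, i.e.\ \eqref{P0} holds, and \eqref{P1} holds with $K=M$ (note $\psi=\phi^\o$, so $\|\psi\|_{L^2_x}\le\|\phi^\o\|_{H^s}\le M$). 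To bound $P(\wt\O_{T,\eps}^c)$, observe that $\|\phi^\o\|_{H^s}$ is handled by Lemma~\ref{LEM:prob1}, while every remaining norm comprising $S^s([0,T))$, $A^s(T)$ and $B^s(T)$ is of the form $\|\jb{\nb}^\sigma z^\o\|_{L^q_tL^r_x([0,T)\times\R^d)}$ with $\sigma\le s$, and with $\sigma=s-\eps'$ for some $\eps'>0$ whenever $r=\infty$; since $\jb{\nb}^\sigma$ commutes with the randomization, $\jb{\nb}^\sigma z^\o = S(t)(\jb{\nb}^\sigma\phi)^\o$ with $\jb{\nb}^\sigma\phi\in H^{s-\sigma}$, so the probabilistic Strichartz estimates \eqref{PStr1}--\eqref{PStr2} of Lemma~\ref{LEM:prob11} apply (the case $r=\infty$ using $s-\sigma>0$). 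Summing over the finitely many norms involved yields
\[
P(\wt\O_{T,\eps}^c) \le C(1+T)\exp\!\Big(-\tfrac{c\,M^2}{T^{\gamma}\|\phi\|_{H^s}^2}\Big)
\]
for some $\gamma\ge0$, which is $<\eps$ once $M$ is taken large enough.

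On $\wt\O_{T,\eps}$ one concludes as follows: Lemma~\ref{LEM:LWP4} (with $f=z^\o$) gives a solution $v$ to \eqref{ZNLS1} on a short interval, Lemma~\ref{LEM:mass} together with Proposition~\ref{PROP:energy} (for $d=6$) or Proposition~\ref{PROP:energy2} (for $d=5$) provides the a priori bound \eqref{Xenergy} on $\|v\|_{L^\infty([0,T];H^1)}$, and Proposition~\ref{PROP:perturb2} then extends $v$ to all of $[0,T]$, with $v\in C([0,T];H^1(\R^d))\cap X^1([0,T))$ by Remark~\ref{REM:X^1}. Hence $u=z^\o+v\in S(t)\phi^\o + C([0,T];H^1(\R^d))\subset C([0,T];H^s(\R^d))$ is the desired (unique) solution to \eqref{XNLS} ($=$ \eqref{NLS} in the defocusing case) on $[0,T]$, which proves Proposition~\ref{PROP:asGWP}.

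I do not expect a genuine obstacle at this stage: the one substantive difficulty, the a priori energy control, has already been isolated in Propositions~\ref{PROP:energy} and~\ref{PROP:energy2}, and the regularity restriction $s>s_*$ originates there rather than in the probabilistic step (where $s>0$ more than suffices). The only point requiring a little care is to match each constituent of $A^s(T)$ and $B^s(T)$ to an instance of Lemma~\ref{LEM:prob11} — in particular, to note that an $L^\infty$ spatial norm always comes with strictly fewer than $s$ derivatives, so that no smoothing under the randomization is needed. Finally, Theorem~\ref{THM:GWP} follows from Proposition~\ref{PROP:asGWP} by Borel--Cantelli: for fixed $T$, the set $\O_T:=\bigcup_{n\in\NB}\wt\O_{T,1/n}$ has full probability and the solution exists on $[0,T]$ for each $\o\in\O_T$, and then $\Sigma:=\bigcap_{T\in\NB}\O_T$ has full probability and, for $\o\in\Sigma$, the solution exists globally in the class $S(t)\phi^\o+C(\R;H^1(\R^d))\subset C(\R;H^s(\R^d))$.
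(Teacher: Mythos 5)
Your proof is correct and follows the same overall strategy as the paper: reduce to the long-time existence result (Proposition~\ref{PROP:perturb2}) for the perturbed equation with $f=z^\o$, after cutting away a small probability event on which the relevant space–time norms of $z^\o$ are out of control, using Lemmas~\ref{LEM:prob1} and~\ref{LEM:prob11} together with the commutation of $\jb{\nb}^\sigma$ with the randomization.

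There is one technical difference worth noting. You control $\|z^\o\|_{S^s([0,T))}\le M$ on the \emph{whole} interval, which forces $M$ to grow with $T$ (since \eqref{PStr1} deteriorates like $T^{2/q}$); that is perfectly fine here because $T$ is fixed, and it has the minor virtue of literally verifying hypothesis~\eqref{P0} as stated. The paper instead fixes $M\sim\|\phi\|_{H^s}(\log\tfrac1\eps)^{1/2}$ \emph{independent} of $T$ and introduces a third set $\O_3$ requiring $\|S(t)\phi^\o\|_{S^s(I_j)}\le M$ on a partition of $[0,T]$ into short subintervals $I_j$ of length $\tau_*\le\tau$; the factor $\tau_*^{2/q}$ in \eqref{PStr1} then decays fast enough to beat the $T/\tau_*$ union-bound losses once $\tau_*$ is chosen small. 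The two routes are interchangeable for Proposition~\ref{PROP:asGWP}: the paper's version is the one that survives if one wants uniformity in $T$ (e.g.\ when tracking quantitative bounds), while yours is a genuine simplification when $T$ is a fixed parameter, as it is here. Your closing Borel--Cantelli step to deduce Theorem~\ref{THM:GWP} is also the one the paper uses.
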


The proof of Proposition \ref{PROP:asGWP} is
analogous
to that of Proposition 8.1 in \cite{BOP2}.
The main difference appears in the definitions of $\O_2$
and $\O_3$
below, incorporating the energy estimate~\eqref{Xenergy}
and the simplified assumption \eqref{P0}.

\begin{proof}
Fix  $T, \eps > 0$.
Set $M = M(\eps, \|\phi\|_{H^s})$ by 
\begin{equation*}	
M  \sim \|\phi\|_{H^s}\Big(\log\frac{1}{\eps}\Big)^\frac{1}{2}.
\end{equation*}

\noi
Without loss of generality, we assume that $\eps > 0$ is sufficiently small
such that $M = M(\eps, \|\phi\|_{H^s}) \geq 1$.
Defining $ \O_1 = \O_1(\eps)$
by
\[  \O_1 := \big\{ \o \in \O:\, \| \phi^\o \|_{H^s} \leq M\big\},\]

\noi
it follows from
 Lemma \ref{LEM:prob1} that
\begin{equation}
 P( \O_1^c) < \frac{\eps}{3}.
\label{asGWP1}
\end{equation}

Given $K > 0$, 
define $\O_2 = \O_2(T, K)$ by 
\[  \O_2 := \big\{ \o \in \O:\,
\|S(t) \phi^\o\|_{F^s(T)} \leq K \big\},\]

\noi
where $F^s(T) = A^s(T)$ when $ d= 6$
and $= B^s(T)$ when $d = 5$.
Then, by Lemmas~\ref{LEM:prob1} and~\ref{LEM:prob11}, 
we can choose $K = K(T, \eps, \|\phi\|_{H^s}) \gg 1 $ such that 
\begin{align}
 P( \O_2^c) < \frac{\eps}{3}.
\label{asGWP1a}
\end{align}

\noi
Hence, the energy estimate \eqref{Xenergy} holds
with some $R = R(K, T) = R(T, \eps) > 0$.

Now, let  $\tau = \tau(R, M, T, s )$ 
be as in Proposition \ref{PROP:perturb2}.
Let  $\dl = \dl(d, s) > 0$ be as in Corollary~\ref{COR:nonlin2}
and set $q = \frac{4}{ 1- 4\dl}$.
With $I_j = [j \tau_*, (j + 1) \tau_*] $
 for some $\tau_* \leq \tau$ (to be chosen later),
 we  partition the interval $[0, T]$ as
\[ [0, T] = \bigcup_{j = 0}^{[\frac T{ \tau_*}]} I_j \cap [0, T]\]

\noi
and  define $\O_3$ by
\[ \O_3 := \Big\{ \o \in \O:
\| S(t) \phi^\o \|_{S^s(I_j)}
\leq M , \, j = 0, \dots, \big[\tfrac T{\tau_*}\big] \Big\}.
\]

\noi
Then,
by Lemma \ref{LEM:prob11}
and taking 
$\tau_* = \tau_*(T, \eps, \|\phi\|_{H^s})>0$ sufficiently small,
we have
\begin{align}
P(\O_3^c)
& \leq  \sum_{j = 0}^{[\frac{T}{\tau_*}] }
P\Big( \| S(t) \phi^\o  \|_{S^s(I_j)}> M\Big)
  \leq C \frac{T}{\tau_*}  \exp\bigg(-c \frac{M^2}{\tau_*^\frac{2}{q} \|\phi\|_{H^s}^2}\bigg)\notag \\
&  \leq C \frac{T}{\tau_\ast}\cdot \tau_* \exp\bigg(-c \frac{M^2}{2\tau_*^\frac{2}{q} \|\phi\|_{H^s}^2}\bigg)
\leq C  T \exp\bigg(-\frac{c}{2\tau_*^\frac{2}{q} \|\phi\|_{H^s}^2}\bigg)\notag\\
& < \frac{\eps}{3}.
\label{asGWP4}
\end{align}

Finally, set $\wt \O_{T, \eps} :=  \O_1 \cap\O_2\cap \O_3$.
Then, from \eqref{asGWP1}, \eqref{asGWP1a}, and \eqref{asGWP4}, we conclude that 
\[ P(\wt \O_{T, \eps}^c) < \eps.\]

\noi
Moreover, for $\o \in \wt \O_{T, \eps}$,
 we can iteratively apply  Proposition \ref{PROP:perturb2} 
and construct the solution $v = v^\o$ 
to \eqref{NLS1a}
on each $[j \tau_*, (j+1)\tau_*]$, $j = 0, \dots, [\frac T\tau_*]-1$,
and $\big[ [\frac T \tau_*]\tau_*, T\big]$.
This completes the proof of Proposition \ref{PROP:asGWP}.
\end{proof}

\section{Probabilistic construction of finite time blowup solutions below the energy space}
\label{SEC:7}

In this section, we present the proof of Theorem \ref{THM:4}.
We first recall the following definition of a weak solution to \eqref{NLS5}.
See \cite{IW}.

\begin{definition}\label{DEF:weak}\rm
We say that $v$ is a weak solution to \eqref{NLS5} on $[0, T)$
if $v$ belongs to $L^\frac{d+2}{d-2}_\text{loc}([0, T) \times \R^d)$ and 
satisfies
\begin{align}
\intt _{[0,T] \times \R^{d}}  v\cdot (-i \dt \psi + \Dl \psi )\, dx dt 
= i \al \int_{\R^d} v_0\cdot  \psi (0) \,dx 
+ \ld  \intt _{[0,T] \times \R^{d}}   |v+\eps z|^\frac{d+2}{d-2} \cdot \psi\,  dx dt
\label{weak1}
\end{align}

\noi
for any test function\footnote{By convention, our test function $\psi$ 
has compact support but 
does not have to vanish at $t = 0$.
The same comment applies to the test function $\eta = \eta(t)$ below.} $\psi \in C_c^{\infty}([0, T) \times \R^d)$.

\end{definition}

Fix $v_0 \in H^1(\R^d)$.
Then, for any $\al > 0$ and $\eps > 0$, 
Proposition \ref{PROP:LWP3} establishes almost sure local well-posedness of 
the following Duhamel formulation:
\begin{align}
v(t) = \al S(t) v_0 -i \ld \int_0^t S(t - t') |v + \eps z^\o|^\frac{d+2}{d-2}(t') dt'.
\label{NLS6}
\end{align}

The following lemma shows that the solution $v$ 
to \eqref{NLS6} is indeed a weak solution to \eqref{weak1}.

\begin{lemma}\label{LEM:weak2}
Let $d=5,6$ and  $1-\frac{1}{d}<s<1$.
Given $\phi \in H^s (\R ^d)$, let $\phi^{\omega}$ be its Wiener randomization defined in \eqref{rand}
and let $z^\o = S(t) \phi^\o$.
Then, given any   $v_0 \in H^1(\R^d)$, $\al > 0$, $\eps > 0$, and $T>0$, 
any local-in-time solution $v \in C([0, T);H^1(\R^d)) \cap X^1([0, T))$ to the Duhamel formulation \eqref{NLS6}
is almost surely a weak solution on $[0, T)$ in the sense of Definition~\ref{DEF:weak}.

\end{lemma}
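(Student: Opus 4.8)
The plan is to show that a strong solution $v \in C([0,T);H^1) \cap X^1([0,T))$ of the Duhamel equation \eqref{NLS6} satisfies the weak formulation \eqref{weak1}. First I would observe that, by the local theory (Proposition \ref{PROP:LWP3} together with Lemma \ref{LEM:LWP4}), almost surely $z^\o \in S^s(I)$ for every compact $I \subset [0,T)$, and in particular $z^\o \in L^{q_d}_t W^{s,r_d}_x$ locally in time; combined with $v \in L^{q_d}_t W^{1,r_d}_x$ this shows, via H\"older and the Sobolev embedding $W^{1,r_d} \hookrightarrow L^{\rho_d}$ from \eqref{N2} (note $\frac{d+2}{d-2}q_d' = q_d$), that $|v + \eps z^\o|^{\frac{d+2}{d-2}} \in L^{q_d'}_t L^{r_d'}_x$ locally in time, hence in $L^1_{\mathrm{loc}}([0,T)\times\R^d)$. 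This also gives $v \in L^{\frac{d+2}{d-2}}_{\mathrm{loc}}([0,T)\times\R^d)$, so $v$ qualifies as a candidate weak solution.

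Next, given a test function $\psi \in C_c^\infty([0,T)\times\R^d)$, I would insert the Duhamel representation \eqref{NLS6} for $v(t)$ into $\iint v\cdot(-i\dt\psi + \Dl\psi)\,dx\,dt$ and compute each of the three resulting terms. For the linear part $\al S(t)v_0$, one uses that $t \mapsto S(t)v_0$ solves $i\dt w + \Dl w = 0$ in the distributional sense with $w(0) = v_0$; integrating by parts in $t$ and $x$ (the boundary term at $t=0$ survives since $\psi(0)$ need not vanish) produces exactly $i\al\int v_0\,\psi(0)\,dx$. For the Duhamel integral term, I would write $F(t') := -i\ld\,|v+\eps z^\o|^{\frac{d+2}{d-2}}(t')$, so the term is $\iint \big(\int_0^t S(t-t')F(t')\,dt'\big)(-i\dt\psi+\Dl\psi)\,dx\,dt$; applying Fubini (justified by the $L^1_{\mathrm{loc}}$ bound on $F$ together with the smoothness and compact support of $\psi$) and then the identity $\int_{t'}^\infty S(t-t')(-i\dt + \Dl)\psi\,dt = -\psi(t')$ — which is just the statement that $S(\cdot - t')$ is the propagator of $i\dt + \Dl$ tested against $\psi$, with the boundary term at $t = t'$ giving $-\psi(t')$ and the terminal boundary term vanishing by compact support of $\psi$ — collapses this to $-\iint F(t')\psi(t')\,dx\,dt' = \ld\iint |v+\eps z^\o|^{\frac{d+2}{d-2}}\psi\,dx\,dt$. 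Summing the three contributions yields \eqref{weak1}.

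The step I expect to be the main obstacle is the rigorous justification of the integration-by-parts / Fubini manipulations at the low regularity $s<1$ of $z^\o$: the product $|v+\eps z^\o|^{\frac{d+2}{d-2}}$ is only in $L^{q_d'}_tL^{r_d'}_x$ and not smooth, so one cannot integrate by parts against it directly. The clean way around this is a density/approximation argument: replace $z^\o$ by a smooth truncation $z_N = \P_{\le N}z^\o$ and $v$ by the corresponding solution $v_N$ (or, alternatively, mollify in space-time), for which all the computations above are classical; establish \eqref{weak1} for $(v_N,z_N)$; and then pass to the limit $N\to\infty$, using that $z_N \to z^\o$ in $L^{q_d}_t W^{s,r_d}_x(I)$ and $v_N \to v$ in $C(I;H^1)\cap X^1(I)$ (by continuous dependence from Lemma \ref{LEM:LWP4}), which forces $|v_N+\eps z_N|^{\frac{d+2}{d-2}} \to |v+\eps z^\o|^{\frac{d+2}{d-2}}$ in $L^{q_d'}_tL^{r_d'}_x(I)$ by the local Lipschitz bound on $\zeta \mapsto |\zeta|^{\frac{d+2}{d-2}}$ used already in the proof of Proposition \ref{PROP:nonlin1}. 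All other pieces — the linear boundary term, Fubini for the smoothed data, and the propagator identity — are then routine. Note that only space-time integrability of $z^\o$ enters, never its differentiability, which is exactly the point needed for the application in Theorem \ref{THM:4}.
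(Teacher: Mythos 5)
Your overall strategy---justify the formal computation by an approximation argument and then pass to the limit---is the right one, and your preliminary observation that $|v+\eps z^\o|^{\frac{d+2}{d-2}}\in L^1_{\mathrm{loc}}$ (via H\"older in $L^{q_d'}_tL^{r_d'}_x$, $W^{1,r_d}\hookrightarrow L^{\rho_d}$, and local $L^{q_d}_tW^{s,r_d}_x$ control on $z^\o$) is a useful prerequisite that the paper takes for granted. That said, the specific approximation you propose differs from the paper's and is more involved than necessary. You regularize the random perturbation ($z_N=\P_{\leq N}z^\o$) and \emph{re-solve} the perturbed NLS to obtain solutions $v_N$, then invoke continuous dependence to pass to the limit. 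The paper instead never touches $z^\o$: it takes arbitrary smooth $v_{0,n}\to v_0$ in $H^1$ and arbitrary smooth $v_n\to v$ in $X^1([0,T))$, observes that each $w_n:=-i\ld\,\I[\N(v_n+\eps z^\o)]$ solves the inhomogeneous linear Schr\"odinger equation in a classical enough sense to integrate by parts against $\psi$, and then passes to the limit using only the difference estimate \eqref{nonlin2} from Proposition \ref{PROP:nonlin1}, which gives $\I[\N(v_n+\eps z^\o)]\to\I[\N(v+\eps z^\o)]$ in $C_TH^1$. This avoids re-solving the equation and hence avoids the need to establish that the $v_N$ exist on a uniform time interval and converge; what's needed is merely density of smooth functions in $X^1$, plus the Lipschitz bound on the forcing map that you already have. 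Your route can be made to work (the $z_N$ are uniformly controlled in $S^s$, so the local theory applies uniformly), but it is a genuinely longer path.

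Two smaller points. First, your propagator identity is off by a factor of $-i$: differentiating $S(t-t')\psi(t)$ and integrating over $t\in[t',\infty)$ gives $\int_{t'}^\infty S(t-t')\big(-i\dt\psi+\Dl\psi\big)(t)\,dt = i\,\psi(t')$, not $-\psi(t')$. Combined with $F=-i\ld|v+\eps z^\o|^{\frac{d+2}{d-2}}$, the Duhamel term does collapse to $\ld\iint|v+\eps z^\o|^{\frac{d+2}{d-2}}\psi\,dx\,dt$ as you assert, but the intermediate step as written is inconsistent with that conclusion. Second, implicitly you are using that $S(\tau)$ is self-transposed under the bilinear (un-conjugated) pairing appearing in \eqref{weak1}, because the multiplier $e^{-i\tau|\xi|^2}$ is even in $\xi$; this is true but deserves a word when one moves $S(t-t')$ across the integral.
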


We first present the proof of Theorem \ref{THM:4}, 
assuming Lemma \ref{LEM:weak2}.
We prove Lemma \ref{LEM:weak2} at this end of this section.
Note that while 
Proposition \ref{PROP:LWP3} guarantees 
the  existence of the solution $v$ to \eqref{NLS6}
at least for some small $T_\o>0 $, 
Lemma \ref{LEM:weak2} 
{\it assumes} its existence on $[0, T)$ for some given $T > 0$.

In the following, we only consider \eqref{NLS2} with $\ld =1$ 
and assume that $v_0$ satisfies \eqref{A2}.
The proof of Theorem \ref{THM:4} is based on the so-called test function method \cite{Z1, Z2}
and we closely follow the argument in \cite{II15}.
We first define two test functions
$\eta = \eta(t) \in C_c^\infty([0, \infty); [0, 1])$
and $\theta = \theta(x) \in C^\infty_c(\R^d; [0, 1])$
such that 
\[\eta(t) = 
\begin{cases}
1 & \text{for }0 \leq t < \frac 12,\\
0 & \text{for } t\geq 1, 
\end{cases}
\qquad \text{and}\qquad
\theta(x) = 
\begin{cases}
1 & \text{for }0 \leq |x| < \frac 12,\\
0 & \text{for } |x|\geq 1.
\end{cases}
\]

\noi
We also define the scaled test functions
$\eta_{_T}$ and $\theta_{_T}$ by 
$\eta_{_T}(t) := \eta\big(\frac{t}{_{T}})$
and $\theta_{_T}(x) := \big(\frac{x}{\sqrt{T}}\big)$.
Finally, we set 
$\psi_{_T}(t, x) := \eta_{_T}(t) \theta_{_T}(x)$.

Given $T \geq 1$, let $v = v^\o\in X^1([0, T))$ be a solution to the Duhamel formulation \eqref{NLS6}
on $[0, T)$.  Define $\1$ and $\II$ by 
\begin{align}
\1(T) = \intt_{[0, T) \times B_{\sqrt{T}}} |v+\eps z^\o|^p \cdot \psi_{_T}^\l \, dx dt
\qquad \text{and}\qquad 
\II(T) = \Im \intt_{B_{\sqrt{T}}} v_0\cdot  \theta_{_T}^\l \, dx ,
\label{weak1a}
\end{align}

\noi	
where $B_r$ denotes the ball of radius $r$ centered at 0 in $\R^d$
and $\l \in \NB$ such that $\l \geq 2p' + 1$.
Here, $p' = \frac{d+2}{4}$ denotes the H\"older conjugate of  $p = \frac{d+2}{d-2}$.
By Lemma \ref{LEM:weak2}
and taking the real part of the weak formulation \eqref{weak1}, 
we obtain 
\begin{align}
 \1(T) - \al \II(T)
& = \Im \intt _{[0,T] \times \R^{d}}  v \cdot \dt \psi^\l_T \, dx dt 
+ \Re \intt _{[0,T] \times \R^{d}}  v \cdot  \Dl \psi^\l_T \, dx dt \notag \\
& =:  \III_1(T) + \III_2(T).
\label{weak2}
\end{align}

By  $\l - 1\geq \frac{\l}{p}$ and the triangle inequality, we have
\begin{align}
\III_1(T) 
& \les T^{-1}   \intt_{[0, T) \times B_{\sqrt{T}}} |v| \cdot 
 \eta_{_T}^{\l-1}\theta_{_T}^\l
  \eta'\big(\tfrac{t}{T}\big)\, dx dt  \notag \\
& \les T^{-1}   \intt_{[0, T) \times B_{\sqrt{T}}} |v+\eps z^\o| \cdot 
 \psi_{_T}^\frac{\l}{p}\, dx dt 
+ \eps  T^{-1}   \intt_{[0, T) \times B_{\sqrt{T}}} |z^\o|  
\, dx dt \notag\\
& \les T \big(\1(T)\big)^\frac{1}{p} 
+ \eps  T^{-1}  \|z^\o\|_{L^1_{t, x}([0, T) \times B_{\sqrt{T}})}.
\label{weak3}
\end{align}

\noi
A similar computation with    $\l - 2\geq \frac{\l}{p}$ and the triangle inequality yields
\begin{align}
\III_2(T) 
& \les T \big(\1(T)\big)^\frac{1}{p} 
+ \eps  T^{-1}  \|z^\o\|_{L^1_{t, x}([0, T) \times B_{\sqrt{T}})}.
\label{weak4}
\end{align}

From \eqref{weak2}, \eqref{weak3}, and \eqref{weak4}
with Young's inequality,  we have
\begin{align}
 - \al \II(T)
&  \leq -  \1(T) + 
C  T \big(\1(T)\big)^\frac{1}{p} 
+ C\eps  T^{-1}  \|z^\o\|_{L^1_{t, x}([0, T) \times B_{\sqrt{T}})}\notag\\
&  \leq -  \1(T) + 
C'  T^{p'}  +  \1(T) 
+ C\eps  T^{-1}  \|z^\o\|_{L^1_{t, x}([0, T) \times B_{\sqrt{T}})}\notag \\
&  \leq 
C'  T^\frac{d+2}{4}  
+ C\eps  T^{-1}  \|z^\o\|_{L^1_{t, x}([0, T) \times B_{\sqrt{T}})}.
\label{weak5}
\end{align}

\noi
On the other hand, from \eqref{A2} and \eqref{weak1a}
with a change of variables, we have
\begin{align}
- \II (T) \geq  T^\frac{d-k}{2} L(T) : = 
T^\frac{d-k}{2} \int_{B_\frac{1}{\sqrt{T}}}|x|^{-k} \theta^\l\,dx.
\label{weak6}
\end{align}

Given $R > 0$,  $T\geq 1$, and $\eps  >0$, define the set $\O_{R, \eps}$
by 
\[ \O_{R,  \eps} := \big\{ \omega \in \Omega : \| \phi^\o\|_{L^2_{ x}} \leq \eps^{-1} R\big\}.\]

\noi
Then, it follows from Lemma \ref{LEM:prob1} that 
\begin{align*}
P(\O_{R,  \eps}^c)\leq C\exp \bigg( - c \frac{R^2}{\eps^2 \|\phi\|_{L^2}^2}\bigg).
\end{align*}

\noi
In particular, 
$P(\O_{R,  \eps}^c) \to 0$ as $R \to \infty$ or $\eps \to 0$
(while keeping the other fixed).

Then, putting   \eqref{weak5} and   \eqref{weak6} together with $T\geq 1$,  we obtain
\begin{align}
  \al 
  & \leq CL^{-1}(T)\Big\{ T^\frac{-d+2k +2}{4} 
+ \eps  T^\frac {-d + k - 2}{2} \|z^\o\|_{L^1_{t, x}([0, T) \times B_{\sqrt{T}})}\Big\}\notag \\
  & \leq CL^{-1}(T)\Big\{ T^\frac{-d+2k +2}{4} 
+ \eps  T^\frac {-d + 2k+ 2 }{4} \|z^\o\|_{L^\infty_t L^2_{x}([0, T) \times B_{\sqrt{T}})}\Big\} \notag\\
  & \leq CL^{-1}(T) T^\frac{-d+2k +2}{4} (1 + R)
\label{weak8}
\end{align}

\noi
for $ \o \in \O_{R,  \eps}$.
In the following, we fix $R > 0$ and $\eps > 0$
and work on $\O_{R,  \eps}$.
Namely, the following argument holds uniformly in 
 $ \o \in \O_{R,  \eps}$
 and we suppress the dependence on $\o$.

Suppose that given $\al > 0$, 
the maximal existence time $T^*(\al) \geq 4$.
Since $k < d$, we have $ L(4) <\infty$.
In particular, by setting $T = 4$ in \eqref{weak8}, 
we obtain 
\[ \al \les 1 + R.\]

\noi
This in turn implies that there exists $\al_0 = \al_0(R)>0$
such that $T^*(\al) < 4$ for all $\al \geq  \al_0$.

Fix  $\al > \al_0$.
Then, 
 by noting that $L(T)$ defined in \eqref{weak6} is decreasing on $[0, \infty)$, 
we conclude  from \eqref{weak8} that 
\begin{align*}
  \al 
   \leq CL^{-1}(4) T^\frac{-d+2k +2}{4} (1 + R)
      \leq C(R) T^\frac{-d+2k +2}{4}
\end{align*}

\noi
for any $0 < T \leq T^*(\al) < 4$.
Hence, we obtain 
the following upper bound on the maximal time of existence:
\begin{align*}
T^*(\al) \leq C'(R) \alpha^{\frac{4}{-d+2k +2}}.
\end{align*}

\noi
Lastly, \eqref{A6} follows from the blowup alternative \eqref{Y3}.
This proves  Theorem~\ref{THM:4}.

We conclude this paper by presenting the proof of Lemma \ref{LEM:weak2}.
While the proof is standard, we include it for completeness.

\begin{proof}[Proof of Lemma \ref{LEM:weak2}]

Write the solution $v$ to \eqref{NLS6} on $[0, T)$
as 
\[v(t) = \al S(t) v_0 -i \ld \I[\N(v+\eps z^\o)](t),\]

\noi
where $\I$ is as in \eqref{duhamel}
and $\N(u) = |u|^\frac{d+2}{d-2}$.
First, we show that the linear part $\al S(t) v_0$ satisfies
\begin{align}\label{L1}
\intt _{[0,T] \times \R^{d}}  v\cdot (-i \dt \psi + \Dl \psi )\, dx dt 
= i \al \int_{\R^d} v_0\cdot  \psi (0) \,dx 
\end{align}

\noi
for any test function $\psi \in C_c^{\infty}([0, T) \times \R^d)$.

Let $v_{0, n}$ be smooth functions 
converging to $v_0$ in $H^1(\R^d)$.
Then, $\al S(t) v_{0, n}$, $n \in\mathbb{N}$,  solves the linear Schr\"odinger equation:
$i\dt v + \Dl v = 0$ and is smooth on $[0, T)\times \R^d$.
Integrating by parts, we have
\begin{align} \label{L2}
\intt _{[0,T] \times \R^{d}}  \al S(t) v_{0, n}\cdot (-i \dt \psi + \Dl \psi )\, dx dt 
=  i \al \int_{\R^d} v_{0, n}\cdot  \psi (0) \,dx. 
\end{align}

\noi
By H\"older's  inequality and the unitarity of $S(t)$ on $L^2(\R^d)$, we have 
\begin{align*}
\bigg|\intt _{[0,T] \times \R^{d}}\al (S(t) v_0 &  - 
S(t) v_{0, n}) (-i \dt \psi + \Dl \psi )\, dx dt \bigg| \notag \\
& \les \| v_0 - v_{0, n}\|_{L^2}
\big( \|\psi\|_{W^{1, 1}_T L^2_x} + \|\psi\|_{L^1_T H^2_x}\big)
\too 0.  
\end{align*}

\noi
Similarly, the right-hand side of \eqref{L2} converges
to the right-hand side of \eqref{L1} as $n \to \infty$.
Hence, \eqref{L1} holds.

Next, we consider the nonlinear part $-i \ld \I(v+\eps z^\o)$.
Let $v_n$ be smooth functions on $ [0, T)\times \R^d$
converging to $v$ in $X^1([0, T))$.
Then, by Proposition \ref{PROP:nonlin1} with 
Lemmas \ref{LEM:prob1} and~\ref{LEM:prob11},
we have
\begin{align} \label{ZZ1}
\big\| \I[\N(v+\eps z^\o)] -  \I[\N(v_n+\eps z^\o)]\big\|_{C_TH^1}\too 0,
\end{align}

\noi
\noi
almost surely.
Let $w_n = -i \ld \I[\N(v_n+\eps z^\o)]$.
Then, $w_n$ is the smooth solution to  the following inhomogeneous linear Schr\"odinger equation:
\[\begin{cases}
i\dt w_n + \Dl w_n = \ld   |v_n+\eps z|^\frac{d+2}{d-2}\\
w_n |_{t = 0} = 0.
\end{cases}
\]

\noi
Then, proceeding as above  with \eqref{ZZ1}
and integrating by parts, we have 
\begin{align}
\intt _{[0,T] \times \R^{d}}
 & -i \ld \I(v+\eps z^\o)\cdot (-i \dt \psi + \Dl \psi )\, dx dt 
 =  \lim_{n \to \infty}
 \intt _{[0,T] \times \R^{d}}
 w_n\cdot (-i \dt \psi + \Dl \psi )\, dx dt \notag\\
& =  \lim_{n \to \infty} 
 \intt _{[0,T] \times \R^{d}}
 (i \dt w_n + \Dl w_n ) \cdot \psi\, dx dt \notag\\
& = \lim_{n \to \infty} \ld  \intt _{[0,T] \times \R^{d}}
   |v_n+\eps z|^\frac{d+2}{d-2} \cdot \psi \, dx dt
= \ld  \intt _{[0,T] \times \R^{d}}
   |v+\eps z|^\frac{d+2}{d-2} \cdot \psi \, dx dt
   \label{ZZ2}
\end{align}

\noi
for any test function $\psi \in C_c^{\infty}([0, T) \times \R^d)$.
Hence, the weak formulation \eqref{weak1} follows from \eqref{L1} and \eqref{ZZ2}.
This completes the proof of Lemma \ref{LEM:weak2}.
\end{proof}

\begin{acknowledgment}

\rm 
T.O.~was supported by the European Research Council (grant no.~637995 ``ProbDynDispEq'').
M.O.~
was supported by JSPS KAKENHI Grant number JP16K17624.

\end{acknowledgment}

\end{document}